\documentclass[reqno,a4paper]{amsart}

\usepackage{amsthm}
\usepackage{amsmath}
\usepackage[all]{xy} \SelectTips{eu}{}
\usepackage{latexsym,amsfonts,amssymb}
\usepackage{hyperref}
\hypersetup{colorlinks=true, linkcolor=teal}
\usepackage{cite}
\hypersetup{citecolor=brown}
\usepackage{leftidx}
\usepackage[dvipsnames,svgnames,table]{xcolor}
\usepackage{mathptmx}
\usepackage{nicefrac}
\usepackage{array}
\usepackage{mathrsfs}
\usepackage{rotating}
\usepackage{tikz-cd}

\usepackage[pagewise]{lineno}
\usepackage{setspace}
\makeatletter
\renewcommand{\@biblabel}[1]{%
  \makebox[4em][l]{[#1]}%
}
\makeatother

\usepackage{vmargin}
\setpapersize{A4}
\setmargrb{25mm}{20mm}{25mm}{20mm}

\renewcommand{\theequation}{$\smash{\sharp}\mspace{0.5mu}$\arabic{equation}}

\newcommand{\numberseries}{\mdseries}   

\newlength{\thmtopspace}                
\newlength{\thmbotspace}                
\newlength{\thmheadspace}               
\newlength{\thmindent}                  

\newtheoremstyle{bfupright head,slanted body}
                {\thmtopspace}{\thmbotspace}
                {\slshape}{\thmindent}{\bfseries}{.}{\thmheadspace}
                {{\numberseries \thmnumber{{\bf #2} }}\thmnote{#3}}

\newtheoremstyle{bfupright head,upright body}
                {\thmtopspace}{\thmbotspace}
                {\upshape}{\thmindent}{\bfseries}{.}{\thmheadspace}
                {{\numberseries \thmnumber{{\bf #2} }}\thmnote{#3}}

\newtheoremstyle{bfit head,upright body}
                {\thmtopspace}{\thmbotspace}
                {\upshape}{\thmindent}{\upshape}{.}{\thmheadspace}
                {{\numberseries\thmnumber{{\bf #2} }}
                {\bfseries\itshape\thmnote{\negthickspace#3}}}

\newtheoremstyle{it head,upright body}
                {\thmtopspace}{\thmbotspace}
                {\upshape}{\thmindent}{\upshape}{.}{\thmheadspace}
                {{\numberseries\thmnumber{{\bf #2} }}
                {\itshape\thmnote{\negthickspace#3}}}

\newtheoremstyle{fixed bf head,slanted body}
                {\thmtopspace}{\thmbotspace}{\slshape}
                {\thmindent}{\bfseries}{\!.}{\thmheadspace}
                {{\thmname{#1}\numberseries\thmnumber{ {\bf #2}}}\thmnote{ (#3)} }

\newtheoremstyle{fixed bf head,upright body}
                {\thmtopspace}{\thmbotspace}{\upshape}
                {\thmindent}{\bfseries}{\!.}{\thmheadspace}
                {{\thmname{#1}\numberseries\thmnumber{ {\bf #2}}}\thmnote{ (#3)} }

\newtheoremstyle{independent paragraph}
                {\thmtopspace}{\thmbotspace}
                {\upshape}{\thmindent}{\upshape}{}{0pt}
                {\thmnote{#3 }}

\newtheoremstyle{subparagraph}
                {\thmbotspace}{\thmbotspace}
                {\upshape}{\thmindent}{\upshape}{}{0pt}
                {\thmnote{#3 }}

\newtheoremstyle{notes}
                {\thmtopspace}{\thmbotspace}
                {\ttfamily}{\thmindent}{\ttfamily\small }{}{0pt}
                {\thmnote{#3 }}
                
\newtheoremstyle{numbered paragraph}
                {\thmtopspace}{\thmbotspace}{\upshape}
                {\thmindent}{\upshape}{}{\thmheadspace}
                {{\numberseries \thmnumber{\bf #2.}}}

\theoremstyle{bfupright head,slanted body}
\newtheorem{res}{}[section]             \newtheorem*{res*}{}

\theoremstyle{bfit head,upright body}
                 \newtheorem*{com*}{}

\theoremstyle{bfupright head,upright body}
               \newtheorem*{bfhpg*}{}

\theoremstyle{it head,upright body}
               \newtheorem*{ithpg*}{}

\theoremstyle{fixed bf head,slanted body}
\newtheorem{thm}[res]{Theorem}          \newtheorem*{thm*}{Theorem}
\newtheorem{prp}[res]{Proposition}      \newtheorem*{prp*}{Proposition}
\newtheorem{cor}[res]{Corollary}        \newtheorem*{cor*}{Corollary}
\newtheorem{lem}[res]{Lemma}            \newtheorem*{lem*}{Lemma}

\theoremstyle{fixed bf head,upright body}
\newtheorem{dfn}[res]{Definition}       \newtheorem*{dfn*}{Definition}
      \newtheorem*{obs*}{Observation}
\newtheorem{rmk}[res]{Remark}           \newtheorem*{rmk*}{Remark}
\newtheorem{exa}[res]{Example}          \newtheorem*{exa*}{Example}
         \newtheorem*{exe*}{Exercise}
\newtheorem{stp}[res]{Setup}            \newtheorem{stp*}{Setup}
         \newtheorem{ntn*}{Notation}
     \newtheorem{con*}{Construction}
      \newtheorem*{conj*}{Conjecture}

\theoremstyle{numbered paragraph}

\theoremstyle{subparagraph}

\theoremstyle{notes}

\newlength{\thmlistleft}        
\newlength{\thmlistright}       
\newlength{\thmlistpartopsep}   
\newlength{\thmlisttopsep}      
\newlength{\thmlistparsep}      
\newlength{\thmlistitemsep}     

\newcounter{eqc} 
  {\end{list}}%

\newcounter{prt}
\newenvironment{prt}{\begin{list}{\upshape (\alph{prt})}%
    {\usecounter{prt}%
      \setlength{\leftmargin}{\thmlistleft}%
      \setlength{\labelwidth}{\thmlistleft}%
      \setlength{\rightmargin}{\thmlistright}%
      \setlength{\partopsep}{\thmlistpartopsep}%
      \setlength{\topsep}{\thmlisttopsep}%
      \setlength{\parsep}{\thmlistparsep}%
      \setlength{\itemsep}{\thmlistitemsep}}}%
  {\end{list}}%

\newcounter{rqm}
\newenvironment{rqm}{\begin{list}{\upshape (\arabic{rqm})}%
    {\usecounter{rqm}%
      \setlength{\leftmargin}{\thmlistleft}%
      \setlength{\labelwidth}{\thmlistleft}%
      \setlength{\rightmargin}{\thmlistright}%
      \setlength{\partopsep}{\thmlistpartopsep}%
      \setlength{\topsep}{\thmlisttopsep}%
      \setlength{\parsep}{\thmlistparsep}%
      \setlength{\itemsep}{\thmlistitemsep}}}%
  {\end{list}}%

\newcounter{rqmm}
  {\end{list}}%

  {\end{list}}%

  {\end{list}}%

  \newcommand{\proofoftag}[2][:]{(#2)#1}

\newcommand{\pgref}[1]{\ref{#1}}

\renewcommand{\eqref}[1]{(\pgref{eq:#1})}

\newcommand{\lemref}[2][Lemma ]{#1\pgref{lem:#2}}

\newcommand{\prpref}[2][Proposition ]{#1\pgref{prp:#2}}

\newcommand{\stpref}[2][Setup~]{#1\pgref{stp:#2}}

\makeatletter
\def\@nobreak@#1{\mathchoice%
  {\nobreakdef@\displaystyle\f@size{#1}}%
  {\nobreakdef@\nobreakstyle\tf@size{\firstchoice@false #1}}%
  {\nobreakdef@\nobreakstyle\sf@size{\firstchoice@false #1}}%
  {\nobreakdef@\nobreakstyle\ssf@size{\firstchoice@false #1}}%
  \check@mathfonts}%
\def\nobreakdef@#1#2#3{\hbox{{%
                    \everymath{#1}%
                    \let\f@size#2\selectfont%
                    #3}}}%
\makeatother

\DeclareFontFamily{T1}{cmex}{}
\DeclareFontShape{T1}{cmex}{m}{n}{<-> s * [0.89] cmex10}{}
\DeclareSymbolFont{cmlargesymbols}{T1}{cmex}{m}{n}
\DeclareMathSymbol{\mycoprod}{\mathop}{cmlargesymbols}{"60} \let\coprod\mycoprod
\DeclareMathSymbol{\myprod}{\mathop}{cmlargesymbols}{"51} \let\prod\myprod

\DeclareSymbolFont{usualmathcal}{OMS}{cmsy}{m}{n}
\DeclareSymbolFontAlphabet{\mathcal}{usualmathcal}

\DeclareSymbolFont{letters}{OML}{txmi}{m}{it}

\DeclareMathSymbol{\alpha}{\mathord}{letters}{"0B}
\DeclareMathSymbol{\beta}{\mathord}{letters}{"0C}
\DeclareMathSymbol{\gamma}{\mathord}{letters}{"0D}
\DeclareMathSymbol{\sigma}{\mathord}{letters}{"0E}
\DeclareMathSymbol{\epsilon}{\mathord}{letters}{"0F}
\DeclareMathSymbol{\zeta}{\mathord}{letters}{"10}
\DeclareMathSymbol{\eta}{\mathord}{letters}{"11}
\DeclareMathSymbol{\theta}{\mathord}{letters}{"12}
\DeclareMathSymbol{\iota}{\mathord}{letters}{"13}
\DeclareMathSymbol{\kappa}{\mathord}{letters}{"14}
\DeclareMathSymbol{\lambda}{\mathord}{letters}{"15}
\DeclareMathSymbol{\mu}{\mathord}{letters}{"16}
\DeclareMathSymbol{\nu}{\mathord}{letters}{"17}
\DeclareMathSymbol{\xi}{\mathord}{letters}{"18}
\DeclareMathSymbol{\pi}{\mathord}{letters}{"19}
\DeclareMathSymbol{\rho}{\mathord}{letters}{"1A}
\DeclareMathSymbol{\sigma}{\mathord}{letters}{"1B}
\DeclareMathSymbol{\tau}{\mathord}{letters}{"1C}
\DeclareMathSymbol{\upsilon}{\mathord}{letters}{"1D}
\DeclareMathSymbol{\phi}{\mathord}{letters}{"1E}
\DeclareMathSymbol{\chi}{\mathord}{letters}{"1F}
\DeclareMathSymbol{\psi}{\mathord}{letters}{"20}
\DeclareMathSymbol{\omega}{\mathord}{letters}{"21}
\DeclareMathSymbol{\varepsilon}{\mathord}{letters}{"22}
\DeclareMathSymbol{\vartheta}{\mathord}{letters}{"23}
\DeclareMathSymbol{\varpi}{\mathord}{letters}{"24}
\DeclareMathSymbol{\varrho}{\mathord}{letters}{"25}
\DeclareMathSymbol{\varsigma}{\mathord}{letters}{"26}
\DeclareMathSymbol{\varphi}{\mathord}{letters}{"27}

\DeclareMathSymbol{\Gamma}{\mathord}{letters}{"00}
\DeclareMathSymbol{\Theta}{\mathord}{letters}{"02}
\DeclareMathSymbol{\Xi}{\mathord}{letters}{"04}
\DeclareMathSymbol{\Pi}{\mathord}{letters}{"05}
\DeclareMathSymbol{\Upsilon}{\mathord}{letters}{"07}
\DeclareMathSymbol{\Phi}{\mathord}{letters}{"08}
\DeclareMathSymbol{\Psi}{\mathord}{letters}{"09}

\DeclareMathSymbol{\upGamma}{\mathalpha}{operators}{"00}
\DeclareMathSymbol{\upDelta}{\mathalpha}{operators}{"01}
\DeclareMathSymbol{\upTheta}{\mathalpha}{operators}{"02}
\DeclareMathSymbol{\upLambda}{\mathalpha}{operators}{"03}
\DeclareMathSymbol{\upXi}{\mathalpha}{operators}{"04}
\DeclareMathSymbol{\upPi}{\mathalpha}{operators}{"05}
\DeclareMathSymbol{\upSigma}{\mathalpha}{operators}{"06}
\DeclareMathSymbol{\upUpsilon}{\mathalpha}{operators}{"07}
\DeclareMathSymbol{\upPhi}{\mathalpha}{operators}{"08}
\DeclareMathSymbol{\upPsi}{\mathalpha}{operators}{"09}
\DeclareMathSymbol{\upOmega}{\mathalpha}{operators}{"0A}

\DeclareMathAlphabet\PazoBB{U}{fplmbb}{m}{n}%

\newcommand{\Hom}[3]{\operatorname{Hom}_{#1}(#2,#3)}
\newcommand{\Ext}[4]{\operatorname{Ext}_{#1}^{#2}(#3,#4)}

\newcommand{\Ker}[1]{\operatorname{Ker}\mspace{1mu}#1}
\newcommand{\Coker}[1]{\operatorname{Cok}\mspace{1mu}#1}

\newcommand{\Prj}[1]{\operatorname{Prj}\mspace{2mu}#1}
\newcommand{\Prjp}[1]{\operatorname{Prj}(#1)}

\newcommand{\Hlettertwo}{\mathbb{H}}

\newcommand{\sHH}[3]{%
\Hlettertwo^{#1}_{#2,\,#3}}

\newcommand{\stalkco}[1]{S\mspace{-2mu}\langle{#1}\rangle}

\newcommand{\Sq}[1]{S_{\mspace{-4mu}\smash{#1}}}

\newcommand{\Fq}[1]{F_{\mspace{-2mu}\smash{#1}}}

\newcommand{\lMod}[1]{{}_{#1}\mspace{-1mu}\operatorname{Mod}}

\newcommand{\lMode}[2]{{}_{#1}\mspace{-1mu}\operatorname{Mod}^{\,#2}}

\newcommand{\lPrj}[1]{{}_{#1}\mspace{-1mu}\operatorname{Prj}}
\newcommand{\lprj}[1]{{}_{#1}\mspace{-1mu}\operatorname{prj}}
\newcommand{\lInj}[1]
{{}_{#1}\mspace{-1mu}\operatorname{Inj}}

\newcommand{\lsmod}[1]{{}_{#1}\mspace{-1mu}\operatorname{mod}}

\DeclareMathOperator{\Inj}{Inj}

\newcommand{\lPPrj}[1]{{}_{#1}\mspace{-1mu}\operatorname{PPrj}}

\newcommand{\exact}[1]{{}_{#1}\mathscr{E}}

\newcommand{\supp}[1]{\operatorname{supp}\mspace{1mu}#1}

\newcommand{\QSD}[2]{\mathbf{D}_{#1}(#2)}
\newcommand{\QSH}[2]{\mathbf{K}_{#1}(#2)}
\newcommand{\QST}[2]{\mathbf{K}^{\theta}_{#1}(#2)}
\newcommand{\QSPD}[2]{\mathbf{D}^{\text{pure}}_{#1}(#2)}
\newcommand{\qb}[1]{{}_{#1}\mspace{-1mu}\mathcal{B}}

\begin{document}

\title{Compact generation and perfect objects in homotopy categories \\ of exact model structures}

\author{Anastasios Slaftsos}

\address{Anastasios Slaftsos, Dipartimento di Matematica ``Tullio Levi-Civita'', Universit\'a degli Studi di Padova, via Trieste 63, 35131 Padova, Italy}
\email{anastasios.slaftsos@studenti.unipd.it}

\keywords{Compactly-generated triangulated categories; exact (model) structures; homotopy categories;  quiver representations; perfect objects; relative homological algebra; well-generated triangulated categories}

\subjclass{18G25, 18G80, 18N40}


\thanks{\textbf{Acknowledgements:} The author would like to thank Henrik Holm, Jan {\v{S}}{\v{t}}ov{\'{\i}}{\v{c}}ek, and Jorge Vit\'oria for helpful discussions on this work. The author was supported by NextGenerationEU, Mission 4 Component 1 Investment 4.1~, CUP C96E23000600001. }
  
\begin{abstract}
Compact generation allows large triangulated categories to be studied through suitably small objects. For homotopy categories arising from exact model structures, a natural question is how this intrinsic notion of smallness relates to finite presentation in the underlying exact category. In this paper, we investigate this relation for hereditary projective model structures generated by finitely presented Gorenstein projective objects. We establish compact generation and study the stronger requirement that every compact object admits a finitely presented cofibrant representative. We call objects with such representatives perfect. Under local coherence and the existence of enough projectives among the finitely presented objects, the finitely presented cofibrant objects form a Frobenius category whose stable category describes the perfect objects and whose idempotent completion recovers all compact objects. The relative $Q$-shaped setting, for suitable small categories $Q$, makes explicit how these finiteness properties depend on the choice of exact structure. The pure exact structure yields a compactly generated homotopy category, called the $Q$-shaped pure derived category. On the other hand, the split exact structure yields the $Q$-shaped homotopy category, which is compactly generated only in the pure semisimple case, analogously to the classical homotopy category of complexes.  Finally, for acyclic $Q$ and for left artinian rings, we show that every compact object admits a strictly perfect representative. In particular, we establish that compact and perfect objects coincide, proving the conjecture of Holm and J{\o}rgensen under these hypotheses.
\end{abstract}

\maketitle

\section{Introduction}
  Compact generation is one of the principal finiteness conditions imposed on a triangulated category with arbitrary coproducts. It first emerged as a common language for phenomena in stable homotopy theory and in unbounded derived categories, and its importance is reflected in the Ravenel--Neeman \cite[Thm. 2.1]{MR1308405} theorem and Brown representability: a set of compact generators controls the whole category, while its thick closure recovers precisely the compact objects \cite{Nee,Krause_2010}. The standard algebraic example is the derived category $\mathbf{D}(A)$ of a ring $A$, compactly generated by the stalk complex $A$, with compact objects the perfect complexes. At the level of model categories, accessibility often guarantees the weaker property of well-generation \cite{Rosicky_Brown_rep_in_hmtp}; compact generation, however, requires generation by genuinely small objects.

 Hovey's correspondence between abelian model structures and compatible complete cotorsion pairs, together with its extension to exact categories, provides a flexible method for constructing algebraic triangulated categories \cite{Hovey02,MR2811572,SaorinStovicek}. On the other hand, the theory of locally finitely presented additive categories revolves around small objects \cite{WCB94}. Positselski's locally coherent exact categories bring these two viewpoints together. Their exact structure is determined by conflations between finitely presented objects and is preserved under directed colimits \cite{Positselski_loc_coh}. This suggests that, when an exact model structure is generated by finitely presented objects, their finiteness should survive passage to the associated homotopy category. Naturally, one can question whether a homotopy category of an exact model structure is compactly generated.

This question already exhibits sharply different behaviour in familiar examples. The pure derived category of a ring, obtained from the pure exact structure, is compactly generated and fits into a recollement relating it to the homotopy category $\mathbf{K}(A)$ and the pure-acyclic complexes \cite{MR3473427,stovicek2014purity}. By contrast, the ordinary homotopy category $\mathbf{K}(A)$ of complexes of arbitrary modules need not even be well generated, see \cite{Nee}. More precisely, {\v{S}}{\v{t}}ov{\'{\i}}{\v{c}}ek proved that the homotopy category of complexes over a finitely accessible additive category is locally well generated, that is, for every set of objects $\mathcal{S}$,  $\operatorname{Loc}(\mathcal{S})$ is a well generated triangulated category, whereas the whole homotopy category is compactly generated precisely in the pure semisimple case \cite{MR2657376}. Thus purity marks a natural boundary between homotopy categories governed by small objects and homotopy categories which may admit no generating set at all.

The same circle of questions arises for $Q$-shaped homological algebra. Holm and J{\o}rgensen introduced the $Q$-shaped derived category $\QSD{Q}{A}$ as the common homotopy category of projective and injective model structures on $\lMod{Q,A}$, see \cite{HJ-JLMS}, and subsequently proved that it is compactly generated and described its compact and perfect objects \cite{HJ-TAMS}. In \cite{SV_Q_shaped_homotopy}, the authors enlarged this construction by replacing the abelian exact structure with objectwise exact structures induced from exact structures in  $\lMod{A}$. The aim of this work is to determine which of the resulting homotopy categories remain controlled by compact objects. This manuscript may be regarded as a direct sequel to \cite{SV_Q_shaped_homotopy}, nevertheless, we recall the essential definitions and results needed to make the exposition self-contained.

The scope of this work is twofold. First we want to find sufficient conditions that guarantee compact generation of the homotopy categories introduced in \cite{SV_Q_shaped_homotopy}, whereas, the second aim is to focus on various examples of such homotopy categories. Our first main result towards the first goal gives an abstract sufficient criterion. Let $\mathcal{E}$ be a weakly idempotent complete, locally finitely presented exact category with enough projectives, and assume that a set $\mathscr{G}$ of finitely presented Gorenstein projective objects induces the hereditary projective model structure via the techniques of \cite[Thm.~3.4]{SV_Q_shaped_homotopy}. For every $G\in\mathscr{G}$, choose the syzygies $\sigma^nG$, $n\in\mathbb{Z}$, in a totally acyclic complex, and put
\[
\mathcal{S}_{\mathscr{G}}
=
\{\sigma^nG\mid G\in\mathscr{G},\ n\in\mathbb{Z}\}.
\]
 The image of $\mathcal{S}_\mathscr{G}$ forms a set of compact objects in the stable category of cofibrant objects, while the cohomological description of the trivial objects shows that they generate the homotopy category. Moreover, in this case, we define perfect objects as the image of cofibrant finitely presented objects in the homotopy category $\operatorname{Ho}(\mathcal{E})$. We show that every perfect object is compact and provide sufficient conditions for the converse.
 
\begin{res*}[Theorem~A (Theorem \ref{thm:compact_generation} and Theorem \ref{thm:compact=perfect})]
Let $\mathcal{E}$ satisfy Setup~\ref{stp:loc_coh}, and let $\mathscr{G}$ be a set of finitely presented Gorenstein projective objects. Then $\mathcal{S}_{\mathscr{G}}$ is a set of compact generators of $\operatorname{Ho}(\mathcal{E})$. In particular,
\[
\operatorname{Ho}(\mathcal{E})
=
\operatorname{Loc}(\mathcal{S}_{\mathscr{G}})
\qquad\text{and}\qquad
\operatorname{Ho}(\mathcal{E})^c
=
\operatorname{Thick}(\mathcal{S}_{\mathscr{G}}).
\]
If moreover, $\mathcal{E}$ is a locally coherent exact category and $\operatorname{fp}(\mathcal{E})$ is Krull-Schmidt, then every perfect object is compact and vice versa, that is,
\[
\operatorname{Perf}(\mathcal{E})=\operatorname{Ho}(\mathcal{E})^c.
\]
\end{res*}

As an application, we show the analogous result for $Q$-shaped triangulated categories in the sense of \cite{HJ-JLMS} and \cite{SV_Q_shaped_homotopy}. The $Q$-shaped version follows naturally as a corollary of Theorem A. Let $\theta$ be the saturation, in the sense of \cite{SV_Q_shaped_homotopy}, of a set $\varrho$ of finitely presented $A$-modules, and endow $\lMod{Q,A}$ with the associated objectwise $\theta$-exact structure. In this setting the Gorenstein projective objects in question are the stalk representations $\Sq{q}(M)$. The special features of $Q$-shaped homology allow us to dispense with all their syzygies.

Assuming that $\theta$ is the saturation of a set $\varrho$ of finitely presented $A$-modules satisfying the Setup~\ref{stp:HJ+theta} we show that the 
\(
\mathcal{S}_{\varrho}
=
\{\Sq{q}(M)\mid q\in Q,\ M\in\varrho\}
\)
is a set of compact generators of $\QST{Q}{A}$. For $\varrho=\{A\}$ and the abelian exact structure it recovers the compact generation of $\QSD{Q}{A}$ from \cite[Thm.~D]{HJ-TAMS}, whereas, for larger choices of $\varrho$ it produces new compactly generated relative $Q$-shaped homotopy categories.

 The perfect objects in the $Q$-shaped derived category were introduced by Holm and J{\o}rgensen in \cite{HJ-TAMS}. They define an object to be perfect if it is isomorphic in the $Q$-shaped derived category to an object with finite support and values in finitely presented $A$-modules. In \emph{loc.cit.} they prove that perfect objects are always compact, and they formulate a conjecture that the converse also holds. Here, we extend their definition to the more general relative setup.

 \begin{res*}[Theorem~B (Theorem \ref{thm:hj_conj})]
    Assume that $Q$ has no cycles. Let also $\rho$ be a set of finitely presented $A$-modules, $\theta=\operatorname{Add}(\varrho)$ be its saturation, and assume that $\lMod{Q,A}$ is endowed with the objectwise $\theta$-exact structure. If $A$ is an artin $\Bbbk$-algebra, then 
    \[
    \QST{Q}{A}^{\text{perf}}=\QST{Q}{A}^c.
    \]
 \end{res*}

 For the special case that $\varrho=\{A\}$, the exact structure is the abelian one, $\QST{Q}{A}$ is the $Q$-shaped derived category $\QSD{Q}{A}$, and Theorem~B proves the Conjecture of Holm and J{\o}rgensen \cite{HJ-TAMS}, for artin algebras $A$.

The main homotopy categories that we study in this work admit both a projective and an injective model with the same weak equivalences. We call a projective model structure
\(
(\mathcal{C}_{\theta},\mathcal{W}_{\theta},\lMode{Q,A}{\theta})
\)
and an injective model structure
\(
(\lMode{Q,A}{\theta},\mathcal{W}_{\theta},\mathcal{F}_{\theta})
\)
a \emph{homotopically balanced pair} if they have the same trivial objects and weak equivalences, and hence the same homotopy category. This includes the projective and injective models for the derived, homotopy and pure derived categories, as well as their $Q$-shaped analogues. For such model structures, we give an explicit characterisation of the cofibrant and fibrant objects in a similar fashion as \cite[Thm.~E]{HJ-TAMS}. In particular, for a saturated class $\theta$ of $A$-modules inducing a homotopically balanced pair of model structures on $\lMode{Q,A}{\theta}$, we have
\begin{align*}
\mathcal{C}_{\theta}
&\subseteq
\{X\in\lMode{Q,A}{\theta}\mid X(q)\in\lPrj{A}^{\theta}
\text{ for every }q\in Q\},\\
\mathcal{F}_{\theta}
&\subseteq
\{Y\in\lMode{Q,A}{\theta}\mid Y(q)\in\lInj{A}^{\theta}
\text{ for every }q\in Q\}.
\end{align*}
If moreover $\lMode{A}{\theta}$ has finite relative global dimension, then both inclusions are equalities. In essence, one recovers the objectwise descriptions of the bifibrant objects in the abelian and split cases and extends them to arbitrary homotopically balanced relative exact structures.

The principal example is obtained by taking $\varrho={}_A\operatorname{mod}$, the set of finitely presented $A$-modules, whose saturation is the class of pure-projective modules. The corresponding objectwise exact structure on $\lMod{Q,A}$ is the objectwise pure exact structure. The projective cotorsion pair generated by the set
\[
\{\Sq{q}(M)\mid q\in Q,\ M\in{}_A\operatorname{mod}\}
\]
gives a projective model
$(\mathcal{C}_{\varrho},\mathcal{W}_{\varrho},\lMode{Q,A}{\varrho})$.
We also construct the complete hereditary cotorsion pair
$(\mathcal{W}_{\varrho},\mathcal{F}_{\varrho})$, and hence an injective model
$(\lMode{Q,A}{\varrho},\mathcal{W}_{\varrho},\mathcal{F}_{\varrho})$.
The two models are homotopically balanced and define the \emph{$Q$-shaped pure derived category}
\[
\frac{\mathcal{C}_{\varrho}}{\lPrj{Q,A}^{\varrho}}
\ \cong\
\QSPD{Q}{A}
\ \cong\
\frac{\mathcal{F}_{\varrho}}{\lInj{Q,A}^{\varrho}}.
\]
The $Q$-shaped version of Theorem~A, shows that this category is compactly generated by the stalks of finitely presented modules. It also fits in a recollement diagram of triangulated categories:
\[
\begin{tikzcd}[column sep=6em]
{\mathbf{K}_Q^{\mathrm{pac}}(A)}
& {\QSH{Q}{A}}
& {\QSPD{Q}{A}.}
\arrow["i_*"{description}, from=1-1, to=1-2]
\arrow["{i^*}"', shift right=3, from=1-2, to=1-1]
\arrow["{i^!}", shift left=3, from=1-2, to=1-1]
\arrow["j^*"{description}, from=1-2, to=1-3]
\arrow["{j_*}", shift left=3, from=1-3, to=1-2]
\arrow["{j_!}"', shift right=3, from=1-3, to=1-2]
\end{tikzcd}
\]
where $\mathbf{K}_Q^{\mathrm{pac}}(A)$ is the full subcategory of $\QSH{Q}{A}$ consisting of the pure-trivial objects. If $A$ has finite pure global dimension, then
\(
\operatorname{Im}(j_!)=\mathbf{K}_Q(\lPPrj{A})\,
\text{ and }\,
\operatorname{Im}(j_*)=\QSH{Q}{{}_A\operatorname{PInj}}.
\) When $Q=Q^{\mathrm{cpx}}$, one recovers the classical pure derived category and the familiar recollement of $\mathbf{K}(A)$ by the pure-acyclic complexes and $\mathbf{D}^{\mathrm{pure}}(A)$, see \cite{stovicek2014purity}.

The preceding results place all objectwise exact structures generated by finitely presented modules on the positive side of the compact generation boundary. The situation changes when the exact structure is generated by objects that are not finitely presented. To formulate the obstruction, for a regular cardinal $\kappa$ we denote by $\lPPrj{A}^{\kappa}$ the additive closure of the $\kappa$-presented $A$-modules. The relevant issue is whether the class $\theta$ is bounded by a single such cardinal, or equivalently whether it is contained in the additive closure of a set of modules.

\begin{res*}[Theorem~C (Theorem \ref{thm:negative_compact_generation})]
Let $\theta$ satisfy Setup~\ref{stp:HJ+theta}. If there is a regular cardinal $\kappa$ such that
\(
\theta\subseteq\lPPrj{A}^{\kappa},
\)
then $\QST{Q}{A}$ is well generated. In the special case $Q=Q^{\mathrm{cpx}}$, the following conditions are equivalent:
\begin{prt}
\item $\mathbf{K}^{\theta}(A)$ is well generated;
\item $\mathbf{K}^{\theta}(A)$ admits a generating set of objects;
\item there exists a regular cardinal $\kappa$ such that
$\theta\subseteq\lPPrj{A}^{\kappa}$.
\end{prt}
\end{res*}

 In particular, for complexes, if the relative projective class $\theta$ cannot be bounded by one regular cardinal, then $\mathbf{K}^{\theta}(A)$ has no generating set; it is therefore neither well generated nor compactly generated. 

The extremal special case is the split exact structure, for which $\theta=\lMod{A}$ and $\QST{Q}{A}=\QSH{Q}{A}$. Here the required boundedness is equivalent to pure semisimplicity. For $Q=Q^{\mathrm{cpx}}$, this specialises to the case found in \cite{MR2657376}. The $Q$-shaped result strengthens the positive conclusion from well-generation to compact generation and, for acyclic $Q$, gives a converse.

\begin{res*}[Theorem~D (Theorem \ref{thm:homotopy_cg})]
Let $A$ be a ring. If $A$ is pure semisimple, then the $Q$-shaped homotopy category $\QSH{Q}{A}$ is compactly generated. If $Q$ is acyclic, the converse also holds; equivalently,
\[
\QSH{Q}{A}\text{ is compactly generated}
\quad\Longleftrightarrow\quad
A\text{ is pure semisimple}.
\]
\end{res*}

Overall in this work, we identify the objectwise pure exact structure as a threshold for compact generation. At and above purity, finitely presented objects yield compact generators, whereas at the split endpoint compact generation can occur only under the exceptional condition that every module is pure-projective. {\color{black} We conclude this work with an application where we consider an exact structure that differs from the ones discussed previously, and investigate whether the associated homotopy category is compactly or well generated.}

\subsection*{Structure of the paper}

Section~\ref{sec:compact-preliminaries} recalls the background on locally finitely presented additive categories, locally coherent exact structures, and compactly and well generated triangulated categories.

In Section~\ref{sec:compact}, we establish the abstract compact generation criterion: finitely presented cofibrant objects become compact in the homotopy category, and the shifts of the prescribed Gorenstein projective objects form a set of compact generators.

Section~\ref{sec:Qompact} applies this criterion to objectwise $\theta$-exact structures on ${}_{Q,A}\operatorname{Mod}$. We construct the projective and injective models of the $Q$-shaped pure derived category and prove it is compactly generated, study homotopically balanced pairs of model structures, and obtain objectwise descriptions of cofibrant and fibrant objects under finite relative global dimension. 

In Section~\ref{sec:negative_compact_gen}, we investigate compact and well generation beyond exact structures generated by finitely presented modules. We study the relation between compact generation of the $Q$-shaped homotopy category and pure semisimplicity, establish a sufficient cardinal boundedness criterion for well generation, and prove the corresponding converse for ordinary complexes. The section concludes with examples arising from the finitely split exact structure.

Lastly, Section~\ref{sec:perfect} develops the theory of perfect objects. Under local coherence and the existence of enough projectives in $\operatorname{fp}(\mathcal E)$, we show that the finitely presented cofibrant objects form a Frobenius category whose stable category identifies with the perfect objects. Its idempotent completion recovers all compact objects, and a Krull-Schmidt hypothesis ensures that this completion adds no new objects. We then specialise to acyclic $Q$, where for left artinian rings, we establish equality between compact and perfect objects, proving the Holm-J{\o}rgensen conjecture in this case.

\section{Preliminaries}\label{sec:compact-preliminaries}

The compact generation results in this work combine two kinds of finiteness. On the exact-categorical side, finite presentation controls the behaviour of objects under directed colimits and ensures that the relevant exact structure is already determined by its finitely presented part. On the triangulated side, compactness controls morphisms into arbitrary coproducts. We recall the two frameworks separately and then indicate the bridge between them. Our conventions follow Crawley--Boevey for locally finitely presented additive categories \cite{WCB94}, Positselski for locally coherent exact categories \cite{Positselski_loc_coh}, and Neeman and Krause for compactly generated triangulated categories \cite{MR1308405,Nee,Krause_2010}.

\subsection{Locally coherent exact categories}
\label{subsec:locally-coherent-categories}

Let $\mathcal{E}$ be an additive category admitting directed colimits. Recall that an object $F$ of $\mathcal{E}$ is \textbf{finitely presented} (or sometimes also \textbf{compact}) if the canonical map
\[
 \varinjlim_{i\in I}\Hom{\mathcal{E}}{F}{X_i}
 \longrightarrow
 \Hom{\mathcal{E}}{F}{\varinjlim_{i\in I}X_i}
\]
is an isomorphism for every directed system $(X_i)_{i\in I}$. We write $\mathsf{fp}(\mathcal{E})$ for the full subcategory of finitely presented objects.

Following Crawley--Boevey \cite{WCB94}, the category $\mathcal{E}$ is called \textbf{locally finitely presented} if $\mathsf{fp}(\mathcal{E})$ is skeletally small and every object of $\mathcal{E}$ is a directed colimit of objects in $\mathsf{fp}(\mathcal{E})$. In the modern terminology of accessible categories this condition is also called \emph{finite accessibility}; in particular, it does not require the existence of all small colimits. This distinction is useful for exact categories such as categories of flat modules.

The restricted Yoneda functor gives a concrete description of these categories. More precisely, Crawley--Boevey's representation theorem \cite[Thm.~1.4]{WCB94} identifies $\mathcal{E}$ with the category of flat additive functors
\[
 \mathcal{E}
 \ \cong\
 \operatorname{Flat}\bigl(\mathsf{fp}(\mathcal{E})^{\mathrm{op}},\mathsf{Ab}\bigr),
 \qquad
 X\longmapsto \Hom{\mathcal{E}}{-}{X}\big|_{\mathsf{fp}(\mathcal{E})}.
\]
Conversely, if $\mathcal{C}$ is a skeletally small additive category in which idempotents split, then the category of flat functors $\operatorname{Flat}(\mathcal{C}^{\mathrm{op}},\mathsf{Ab})$ is locally finitely presented and its finitely presented objects identify with $\mathcal{C}$. Thus a locally finitely presented additive category can be regarded as the category of ind-objects of its finitely presented part.

Suppose now that $\mathcal{A}$ is a locally finitely presented abelian category. The category $\mathcal{A}$ is called \textbf{locally coherent} if $\mathsf{fp}(\mathcal{A})$ is an abelian subcategory. Since finitely presented objects are closed under finite colimits and extensions, this is equivalent to requiring that $\mathsf{fp}(\mathcal{A})$ be closed under kernels in $\mathcal{A}$, or equivalently that the kernel of every epimorphism between finitely presented objects be finitely presented. For example, the category $\lMod{A}$ is locally coherent precisely when the ring $A$ is left coherent; equivalently, the finitely presented left $A$-modules form an abelian category.

For an arbitrary locally finitely presented additive category there is a canonical exact structure which does not require an ambient abelian category. A composable pair
\[
 X'\longrightarrow X\longrightarrow X''
\]
is a \textbf{pure conflation} if applying $\Hom{\mathcal{E}}{F}{-}$ produces a short exact sequence of abelian groups for every $F\in\mathsf{fp}(\mathcal{E})$. Equivalently, the pure conflations are the directed colimits of split short exact sequences \cite[Sec.~3]{WCB94}. The pure exact structure is the smallest locally coherent exact structure in the following sense.

\begin{dfn}[\!{\cite[Sec.~1]{Positselski_loc_coh}}]
\label{dfn:locally-coherent-exact}
Let $\mathcal{E}$ be a locally finitely presented additive category endowed with an exact structure. We call $\mathcal{E}$ a \textbf{locally coherent exact category} if its conflations are precisely the directed colimits of conflations
\[
 F'\rightarrowtail F\twoheadrightarrow F''
\]
whose three terms belong to $\mathsf{fp}(\mathcal{E})$. Here the colimits are taken in the category of three-term complexes in $\mathcal{E}$.
\end{dfn}

Thus local coherence has two components: the exact structure is closed under directed colimits, and every conflation can be recovered from conflations between finitely presented objects. The following properties will be used repeatedly below. 

\subsection{Well-generated triangulated categories}
\label{subsec:kappa-presentability-well-generation}

The finiteness conditions above admit a cardinal-valued refinement.  Recall that an infinite cardinal $\kappa$ is \textbf{regular} if a union of fewer than $\kappa$ sets, each of cardinality less than $\kappa$, again has cardinality less than $\kappa$.  A small category $I$ is \textbf{$\kappa$-filtered} if every diagram in $I$ involving fewer than $\kappa$ objects and morphisms admits a cocone.  For a partially ordered set, this says that every subset of cardinality less than $\kappa$ has an upper bound.

Let $\mathcal{E}$ be a category admitting $\kappa$-filtered colimits.  An object $F$ of $\mathcal{E}$ is \textbf{$\kappa$-presentable} if the functor $\Hom{\mathcal{E}}{F}{-}$ preserves $\kappa$-filtered colimits, explicitly, the canonical map
\[
 \varinjlim_{i\in I}\Hom{\mathcal{E}}{F}{X_i}
 \longrightarrow
 \Hom{\mathcal{E}}{F}{\varinjlim_{i\in I}X_i}
\]
is an isomorphism for every $\kappa$-filtered diagram $(X_i)_{i\in I}$.  The category $\mathcal{E}$ is \textbf{$\kappa$-accessible} if it has $\kappa$-filtered colimits and there is a set of $\kappa$-presentable objects such that every object of $\mathcal{E}$ is a $\kappa$-filtered colimit of objects in that set.  If, in addition, $\mathcal{E}$ is cocomplete, then it is \textbf{locally $\kappa$-presentable} \cite[Chap.~1]{AdamekRosicky}.

For $\kappa=\aleph_0$ one recovers finitely presentable objects and finitely accessible categories.  In a module category, the categorical terminology agrees with the usual one. An $A$-module $M$ is $\kappa$-presentable if and only if there is an exact sequence
\[
 A^{(J)}\longrightarrow A^{(I)}\longrightarrow M\longrightarrow 0
 \qquad\text{with}\qquad |I|,|J|<\kappa.
\]
Such a module is also called \textbf{$\kappa$-presented}. {\color{black} An object $X$ is called \textbf{$\kappa$-generated}, if it admits a set of generators whose cardinality is strictly less than $\kappa$.}  Every locally presentable category is locally $\kappa$-presentable for some regular cardinal $\kappa$, and every set of its objects consists of $\lambda$-presentable objects for some sufficiently large regular cardinal $\lambda$.  This uniform bound on a set of objects will be used in the discussion of $\kappa$-pure-projective modules later in this manuscript.

There is a parallel cardinal filtration on a triangulated category, but the relevant notion is smallness with respect to coproducts rather than presentability with respect to coproducts.  Let $\mathcal{T}$ be a triangulated category with set-indexed coproducts and let $\kappa$ be regular.  An object $G$ of $\mathcal{T}$ is \textbf{$\kappa$-small} if every morphism
\[
 G\longrightarrow\coprod_{i\in I}X_i
\]
factors through $\coprod_{i\in J}X_i$ for some subset $J\subseteq I$ with $|J|<\kappa$.  A full subcategory is \textbf{$\kappa$-localising} if it is thick and closed under coproducts with fewer than $\kappa$ summands; we write $\operatorname{Loc}_{\kappa}(\mathcal{S})$ for the smallest such subcategory containing a class $\mathcal{S}$.

To state well-generation, one also needs a compatibility condition between the generators and arbitrary coproducts.  A set $\mathcal{G}$ \textbf{perfectly generates} $\mathcal{T}$ if $\operatorname{Loc}(\mathcal{G})=\mathcal{T}$ and the following condition holds: for every family of morphisms $(f_i\colon X_i\to Y_i)_{i\in I}$ such that
\[
 \Hom{\mathcal{T}}{G}{X_i}\longrightarrow
 \Hom{\mathcal{T}}{G}{Y_i}
\]
is surjective for every $G\in\mathcal{G}$ and every $i\in I$, the induced map
\[
 \Hom{\mathcal{T}}{G}{\coprod_{i\in I}X_i}
 \longrightarrow
 \Hom{\mathcal{T}}{G}{\coprod_{i\in I}Y_i}
\]
is surjective for every $G\in\mathcal{G}$.

\begin{dfn}[{\cite[Secs.~5.1 and 6.1--6.3]{Krause_2010}}]
\label{dfn:well_generated_triangulated}
The category $\mathcal{T}$ is \textbf{$\kappa$-well generated} if it is perfectly generated by a set of $\kappa$-small objects.  It is \textbf{well generated} if it is $\kappa$-well generated for some regular cardinal $\kappa$.
\end{dfn}

Suppose that $\mathcal{T}$ is $\kappa$-well generated by $\mathcal{G}$.  For every regular cardinal $\lambda\geq\kappa$, put
\[
 \mathcal{T}^{\lambda}
 :=
 \operatorname{Loc}_{\lambda}(\mathcal{G}).
\]
This subcategory is independent of the chosen set of perfect generators, and its objects are called \textbf{$\lambda$-compact}.  They form an essentially small subcategory of $\mathcal{T}$, and
\[
 \mathcal{T}
 =
 \bigcup_{\substack{\lambda\geq\kappa\\
                    \lambda\text{ regular}}}
 \mathcal{T}^{\lambda}.
\]
Thus well-generation expresses that $\mathcal{T}$ is controlled by a set of objects of uniformly bounded ``cardinal size''.  Well generated triangulated categories satisfy Brown representability, and localising subcategories generated by sets, as well as the corresponding Verdier quotients, remain well generated \cite[Secs.~6--7]{Krause_2010}.

It is important not to identify the two cardinal notions without an argument: $\kappa$-presentability concerns $\kappa$-filtered colimits in an underlying category, whereas $\kappa$-smallness concerns factorisation of morphisms through subcoproducts in a triangulated category.  For homotopy categories arising from stable combinatorial model categories, accessibility provides the bridge after possibly enlarging the regular cardinal; Rosick\'y's theorem shows that such homotopy categories are well generated \cite[Thm.~4.9]{Rosicky_Brown_rep_in_hmtp}.  When $\kappa=\aleph_0$, however, the comparison becomes elementary: $\aleph_0$-small objects are precisely compact objects, and the perfectness condition is automatic for a set of compact objects.  Consequently,
\[
 \aleph_0\text{-well generated}
 \quad\Longleftrightarrow\quad
 \text{compactly generated},
 \qquad
 \mathcal{T}^{\aleph_0}=\mathcal{T}^{c}.
\]

Localising generation permits arbitrary coproducts, whereas thick generation only uses finite triangulated constructions and direct summands. Neeman's theorem identifies precisely how the two notions meet on compact objects.

\begin{thm}[Ravenel--Neeman\, {\cite[Thm.~2.1]{MR1308405}}]
\label{thm:ravenel-neeman}
Let $\mathcal{S}$ be a set of compact objects in a triangulated category $\mathcal{T}$ with set-indexed coproducts, and put
\(
 \mathcal{L}=\operatorname{Loc}(\mathcal{S}).
\)
Then $\mathcal{L}$ is compactly generated and
\[
 \mathcal{L}^{c}
 =\mathcal{L}\cap\mathcal{T}^{c}
 =\operatorname{Thick}(\mathcal{S}).
\]
In particular, if $\mathcal{T}=\operatorname{Loc}(\mathcal{S})$, then
\[
 \mathcal{T}^{c}=\operatorname{Thick}(\mathcal{S}).
\]
\end{thm}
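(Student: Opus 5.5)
The plan follows Neeman's argument and proceeds in three steps: $\mathcal{L}$ is compactly generated, $\mathcal{L}^{c}=\mathcal{L}\cap\mathcal{T}^{c}$, and this class equals $\operatorname{Thick}(\mathcal{S})$.

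\emph{Step 1.} Since $\mathcal{L}=\operatorname{Loc}(\mathcal{S})$ is closed under coproducts formed in $\mathcal{T}$, coproducts in $\mathcal{L}$ agree with those in $\mathcal{T}$. Hence every $S\in\mathcal{S}$ is compact in $\mathcal{L}$, and more generally $\mathcal{L}\cap\mathcal{T}^c\subseteq\mathcal{L}^c$. To see that $\mathcal{S}$ generates $\mathcal{L}$, I use the Brown--Neeman construction. For $X\in\mathcal{L}$ I build a sequence $X_0\to X_1\to\cdots$ with $X_0=\coprod S[n]$ over all maps $S[n]\to X$, and each $X_{k+1}$ the cone of a coproduct of shifted generators mapping onto the kernel of $\Hom{}{\mathcal{S}}{X_k}\to\Hom{}{\mathcal{S}}{X}$. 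The homotopy colimit $X_\infty$ lies in $\mathcal{L}$ and maps to $X$ inducing isomorphisms on $\Hom{}{S[n]}{-}$, using compactness of $S$ to commute $\Hom$ with the homotopy colimit. The cone $C$ therefore satisfies $\mathcal{S}^{\perp}\ni C$. Bousfield localisation then shows $\operatorname{Loc}(\mathcal{S})\cap\mathcal{S}^\perp=0$, since any $C\in\mathcal{L}\cap\mathcal{S}^\perp$ has identity factoring through objects built from $\mathcal{S}$, on which maps to $C$ vanish. So $\mathcal{S}$ is a set of compact generators of $\mathcal{L}$.

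\emph{Step 2.} The heart of the proof, and the step I expect to be the main obstacle, is showing that every compact object of $\mathcal{L}$ lies in $\operatorname{Thick}(\mathcal{S})$. This gives $\mathcal{L}^c\subseteq\operatorname{Thick}(\mathcal{S})\subseteq\mathcal{L}\cap\mathcal{T}^c\subseteq\mathcal{L}^c$; the middle inclusion holds because compacts form a thick subcategory. Let $X\in\mathcal{L}^c$ and write $X\cong\operatorname{hocolim}X_k$ as in Step 1. Each $X_k$ is, up to extensions, a coproduct of objects of $\operatorname{Thick}(\mathcal{S})$, so I refine the construction so that $X$ is a homotopy colimit of a sequence $Y_0\to Y_1\to\cdots$ of objects each a coproduct of objects in $\operatorname{Thick}(\mathcal{S})$. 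Compactness of $X$ makes $\mathrm{id}_X$ factor through the coproduct $\coprod Y_k$ appearing in the defining triangle, and hence through a finite stage. Compactness again makes the factorisation pass through a finite subcoproduct at each step.

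The delicate point is that the filtration stages are built with infinite coproducts and cones. One needs Neeman's lemma that a map from a compact object into a ``$\mathcal{S}$-cellular'' object of length $n$ factors through a cellular object using only finitely many cells; this is proved by induction on $n$ using the octahedral axiom. I will apply this lemma to $\mathrm{id}_X$. It exhibits $X$ as a direct summand of an object finitely built from $\mathcal{S}$, so $X\in\operatorname{Thick}(\mathcal{S})$.

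\emph{Step 3.} The final assertion is the special case $\mathcal{L}=\mathcal{T}$, where $\mathcal{L}\cap\mathcal{T}^c=\mathcal{T}^c$.
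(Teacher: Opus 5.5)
The paper does not prove this statement; it quotes it from Neeman. Your outline is essentially Neeman's original argument, so the route is the right one. Its two ingredients are that a compact $X$ is a retract of a finite stage $X_k$ of the cellular tower, and that a map from a compact object into an object built in $n$ steps from coproducts of shifts of $\mathcal{S}$ factors through one built in $n$ steps from finite coproducts.

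One sentence of Step~2 should be deleted. You claim the tower can be refined so that each $Y_k$ is a coproduct of objects of $\operatorname{Thick}(\mathcal{S})$, but nothing justifies this. A cone of a map between coproducts of objects of $\mathcal{S}$ is in general not such a coproduct. For example, in $\mathbf{D}(\mathbb{Z})$ the indecomposable, non-perfect object $\mathbb{Q}$ is the cone of the telescope map $\mathbb{Z}^{(\mathbb{N})}\to\mathbb{Z}^{(\mathbb{N})}$ for $\mathbb{Z}\xrightarrow{2}\mathbb{Z}\xrightarrow{3}\mathbb{Z}\to\cdots$. Moreover, for compact $X$ the claim is equivalent to what you are trying to prove. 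With such $Y_k$, the finite-subcoproduct step would finish the proof with no lemma at all; conversely, if $X\in\operatorname{Thick}(\mathcal{S})$, the constant sequence works. So the refinement hides exactly the difficulty your third paragraph addresses.

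The correct version runs as follows. $\mathrm{id}_X$ factors through some $X_k$, so $X\to X_k$ is a split monomorphism. Apply the cellular lemma to this map, not to $\mathrm{id}_X$, to factor it as $X\to Z\to X_k$ with $Z$ finitely built. Composing with the retraction $X_k\to X$ shows that $X$ is a summand of $Z\in\operatorname{Thick}(\mathcal{S})$.

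Two smaller remarks. In the inductive step of the lemma, you need the finitely built approximation to be compact in $\mathcal{L}$, so that its map into the next coproduct factors through a finite subcoproduct. This is where thickness of $\mathcal{L}^c$ and compactness of $\mathcal{S}$ are used a second time.

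Step~1 is also heavier than needed. Suppose $C\in\mathcal{L}$ and $\operatorname{Hom}(\Sigma^n S,C)=0$ for all $S\in\mathcal{S}$ and $n\in\mathbb{Z}$. Then the objects $Y$ with $\operatorname{Hom}(\Sigma^n Y,C)=0$ for all $n$ form a localising subcategory containing $\mathcal{S}$, hence containing $C$, so $C=0$. The tower is only needed in Step~2, to write a compact $X$ as $\operatorname{hocolim}X_k$.
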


Compact generation also places $\mathcal{T}$ within the scope of \textbf{Brown representability}, that is, every contravariant cohomological functor
\[
 H\colon\mathcal{T}^{\mathrm{op}}\longrightarrow\mathsf{Ab}
\]
which takes coproducts to products is representable \cite[Thm.~3.1]{MR1308405}. Among other consequences, an exact coproduct-preserving functor from a compactly generated triangulated category admits a right adjoint. In this work, however, the main consequence we need is the final equality in Theorem~\ref{thm:ravenel-neeman}.

The standard example of a compactly generated triangulated category is the unbounded derived category $\mathbf{D}(A)$ of a ring $A$. The complex $A$ concentrated in degree zero is compact and
\[
 \Hom{\mathbf{D}(A)}{A}{\Sigma^nX}\cong H^n(X),
\]
so $A$ generates $\mathbf{D}(A)$. Hence $\mathbf{D}(A)$ is compactly generated and Theorem~\ref{thm:ravenel-neeman} yields
\[
 \mathbf{D}(A)^c
 =\operatorname{Thick}(A)
 =\mathbf{K}^{b}(\lprj{A}),
\]
the subcategory of perfect complexes.

\subsection{Exact model structures from Gorenstein projective objects}
\label{subsec:SV-models}

We recall the construction from \cite{SV_Q_shaped_homotopy}
which produces the homotopy categories studied below. Let
$\mathcal E$ be a weakly idempotent complete exact category
with enough projectives. For a class $\mathcal X$ of objects,
write
\[
\mathcal X^\perp
 =
 \{Y\in\mathcal E\mid
   \Ext{\mathcal E}{1}{X}{Y}=0
   \text{ for every }X\in\mathcal X\},
\]
and define ${}^\perp\mathcal X$ dually. All Ext groups are
computed with respect to the specified exact structure.

A \textbf{cotorsion pair} is a pair $(\mathcal C,\mathcal W)$
satisfying $\mathcal C={}^\perp\mathcal W$ and
$\mathcal W=\mathcal C^\perp$. It is \textbf{complete} if
every $X\in\mathcal E$ admits conflations
\[
W\rightarrowtail C\twoheadrightarrow X,
\qquad
X\rightarrowtail W'\twoheadrightarrow C',
\]
with $C,C'\in\mathcal C$ and $W,W'\in\mathcal W$.
It is \textbf{hereditary} if
$\Ext{\mathcal E}{i}{C}{W}=0$ for every $i>0$,
$C\in\mathcal C$, and $W\in\mathcal W$.
The cotorsion pair \textbf{generated by} $\mathcal X$ is
$({}^\perp(\mathcal X^\perp),\mathcal X^\perp)$.

An acyclic complex $P$ of projective objects is
\textbf{totally acyclic} if $\Hom{\mathcal E}{P}{P'}$
is acyclic for every projective object $P'$.
An object is \textbf{Gorenstein projective} if it occurs
as a cycle in such a complex. For each object $G$ in a
class $\mathscr G$ of Gorenstein projectives, choose a
totally acyclic complex $P_G$ with
$\operatorname{Z}^0P_G\cong G$, and put
$\sigma^nG=\operatorname{Z}^nP_G$. Consider
\[
\mathcal S_{\mathscr G}
 =\{\sigma^nG\mid G\in\mathscr G,\ n\in\mathbb Z\},
\qquad
\mathcal W_{\mathscr G}=\mathcal S_{\mathscr G}^{\perp},
\qquad
\mathcal C_{\mathscr G}={}^\perp\mathcal W_{\mathscr G}.
\]

If $(\mathcal C_{\mathscr G},\mathcal W_{\mathscr G})$
is complete, then \cite[Thm.~3.4]{SV_Q_shaped_homotopy}
provides a hereditary projective exact model structure
whose Hovey triple is
\[
(\mathcal C_{\mathscr G},\mathcal W_{\mathscr G},\mathcal E).
\]
Thus $\mathcal C_{\mathscr G}$ is the class of cofibrant
objects, $\mathcal W_{\mathscr G}$ is the class of objects
weakly equivalent to zero, and every object is fibrant.
Moreover,
\[
\mathcal C_{\mathscr G}\cap\mathcal W_{\mathscr G}
 =\Prj{\mathcal E},
\qquad
\operatorname{Ho}(\mathcal E)
 \simeq\underline{\mathcal C}_{\mathscr G}.
\]
Here $\mathcal C_{\mathscr G}$, with its inherited exact
structure, is Frobenius: it has enough projectives and
injectives, and these coincide with $\Prj{\mathcal E}$.
Its stable category has the same objects and morphisms
modulo those factoring through $\Prj{\mathcal E}$.
This description allows us to study the homotopy category
using representatives and conflations in $\mathcal E$.

Completeness is automatic when $\mathscr G$ is a set and
$\mathcal E$ is \textbf{efficient}
\cite[Prop.~3.3]{SV_Q_shaped_homotopy}.
We use efficiency in the sense of
\cite[Def.~2.6]{SaorinStovicek}. Since $\mathcal E$ has
enough projectives, this amounts to requiring that
arbitrary transfinite compositions of inflations exist
and remain inflations, and that every object is small
relative to inflations
\cite[Prop.~2.7]{SaorinStovicek}. In particular, efficient exact categories have exact
coproducts \cite[Lem.~1.4]{SaorinStovicek}.

\section{Compactly generated homotopy categories}\label{sec:compact}
In this section we provide a sufficient condition for homotopy categories induced by a set of Gorenstein projective objects as in \cite[Thm.~3.4]{SV_Q_shaped_homotopy}, to be compactly generated triangulated categories.

\begin{stp}\label{stp:loc_coh}
    Let $\mathcal{E}$ be an weakly idempotent complete, locally finitely presented additive category endowed with an exact structure such that:
    \begin{rqm}
        \item it has enough projectives;
        \item admits a projective hereditary model structure as in \cite[Thm.~3.4]{SV_Q_shaped_homotopy}; see the subsection \ref{subsec:SV-models}.
    \end{rqm}
\end{stp}

 The following results are known in the abelian case from \cite[Lem.~4.4~\&~Thm.~4.5]{HJ-TAMS}. We include a proof for the exact case for completeness.

\begin{lem}\label{lem:compacts}
    Let be $\mathcal{E}$ be as in the \stpref{loc_coh} and let $C$ be a finitely presented cofibrant object in $\mathcal{E}$. Then, the image of $C$ in the homotopy category $\operatorname{Ho}(\mathcal{E})$ is a compact object.
\end{lem}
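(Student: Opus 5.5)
We work in the model $\operatorname{Ho}(\mathcal{E})\simeq\underline{\mathcal{C}}_{\mathscr G}$ from Subsection~\ref{subsec:SV-models}: morphisms are maps in $\mathcal{E}$ modulo those factoring through $\Prj{\mathcal{E}}$.

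The first step is to identify coproducts in $\operatorname{Ho}(\mathcal{E})$. Given a family $(X_i)_{i\in I}$ of cofibrant objects, the coproduct $\coprod_i X_i$ taken in $\mathcal{E}$ is again cofibrant. Indeed, $\mathcal{C}_{\mathscr G}={}^\perp\mathcal{W}_{\mathscr G}$ is closed under coproducts because $\Ext{\mathcal{E}}{1}{\coprod_i X_i}{W}\cong\prod_i\Ext{\mathcal{E}}{1}{X_i}{W}$ once coproducts are exact. We then check that this object represents the coproduct in the stable category. This is standard, since a map out of $\coprod_i X_i$ factors through a projective if and only if each component does: one direction is immediate, and for the other one sums the factorisations, $\coprod_i P_i$ being projective. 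So for every $X\in\mathcal{C}_{\mathscr G}$ it suffices to show that
\[
\bigoplus_i \underline{\operatorname{Hom}}(C,X_i)\longrightarrow \underline{\operatorname{Hom}}\Bigl(C,\coprod_i X_i\Bigr)
\]
is bijective.

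Next we exploit finite presentation of $C$. A coproduct is the directed colimit of its finite subcoproducts $\coprod_{j\in J}X_j$ with $J\subseteq I$ finite. Since $C$ is finitely presented, every morphism $C\to\coprod_i X_i$ factors through some finite subcoproduct, which is also a finite product. This already gives \emph{surjectivity}.

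For \emph{injectivity}, take $f=(f_j)_{j\in J}\colon C\to\coprod_{j\in J}X_j$ with $J$ finite, and suppose that its composite with the inclusion into $\coprod_i X_i$ factors through a projective $P$ as $C\to P\to\coprod_i X_i$. We may take $P$ to be a deflation onto $\coprod_i X_i$, for instance $\coprod_i P_i\twoheadrightarrow\coprod_i X_i$ with $P_i\twoheadrightarrow X_i$. Precisely, we choose deflations $p_i\colon P_i\twoheadrightarrow X_i$ from projectives, so that $\coprod p_i$ is a deflation from a projective, and every map into $\coprod_i X_i$ that factors through a projective factors through $\coprod p_i$. The resulting map $C\to\coprod_i P_i$ lands, by finite presentation, in a finite subcoproduct $\coprod_{j\in J'}P_j$ with $J'\supseteq J$ after enlarging. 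Composing with the projection onto each component $j\in J$ then shows that $f_j$ factors through $P_j$. Hence each $f_j$ is zero stably, and so is $f$.

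The main obstacle I expect is the justification of these coproduct and factorisation facts in the exact setting. We need that coproducts exist and are exact, so that $\coprod p_i$ is a deflation and $\mathcal{C}_{\mathscr G}$ is closed under coproducts; this is used via the efficiency available in the construction of Subsection~\ref{subsec:SV-models}. We also need that the directed colimit of finite subcoproducts is the coproduct in $\mathcal{E}$, which is fine since $\mathcal{E}$ is additive and admits directed colimits. With these in hand, the argument reduces to the two factorisation steps above, mirroring \cite[Lem.~4.4]{HJ-TAMS}.
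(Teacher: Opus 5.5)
Your proof is correct and follows essentially the same route as the paper: finite presentation of $C$ gives the bijection $\bigoplus_i\Hom{\mathcal{E}}{C}{X_i}\cong\Hom{\mathcal{E}}{C}{\coprod_iX_i}$, and you check that it matches the ideals of maps factoring through projectives, with the key step being that composing with the projections $\pi_j$ preserves such factorisations. Your detour through the deflation $\coprod_i p_i$ and a second use of finite presentation is unnecessary: if $\iota f$ factors through some projective $P$, then $f_j=\pi_j\iota f$ already factors through that same $P$, which is exactly the paper's argument.
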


\begin{proof}
    Let $(\mathcal{C}_\mathscr{G},\mathcal{W}_\mathscr{G},\mathcal{E})$ be the projective hereditary Hovey triple in $\mathcal{E}$, given by \cite[Thm.~3.4]{SV_Q_shaped_homotopy}. We show that if $C$ is compact (or, as stated above, finitely presented) in $\mathcal{E}$, then the image of $C$ in $\operatorname{Ho}(\mathcal{E})$ (which we abbreviate as $C$ as well) is compact.

    Let $\{X_i\}_{i\in I}$ be a (set-indexed) family of objects in $\mathcal{C}_\mathscr{G}$. Since $C$ is compact in $\mathcal{E}$, there is a canonical isomorphism
    \begin{equation}\label{eq:compact}
    \vartheta\colon \bigoplus_{i\in I}\Hom{\mathcal{E}}{C}{X_i}\stackrel{\cong}{\longrightarrow}\Hom{\mathcal{E}}{C}{\bigoplus_{i\in I}X_i}\,,
    \end{equation}
    mapping a family $(a_i\colon C\to X_i)_{i\in I}$ of morphisms to the sum $\Sigma_{i\in I}\iota_ia_i$, where $\iota_i$ is the canonical inclusion of the $i$-th summand to the direct sum. 
    
    Recall that, in an exact model category, the class of cofibrant objects $\mathcal{C}$ is closed under coproducts, whereas the class of fibrant objects $\mathcal{F}$ is closed under products. In our case, both classes are closed under coproducts, since the class of fibrant objects is $\mathcal{E}$. In particular, the intersection $\mathcal{C}_\mathscr{G}\cap\mathcal{W}_\mathscr{G}$ is also closed under coproducts since it is equal to the class of projective objects $\Prj{\mathcal{E}}$. It follows that, the functor $\mathcal{E}\longrightarrow\operatorname{Ho}(\mathcal{E})$ preserves coproducts, since $\operatorname{Ho}(\mathcal{E})\,\cong\,\underline{\mathcal{C}}_\mathscr{G}$, and thus, there is a well defined map
    \[
    \underline{\vartheta}\colon\bigoplus_{i\in I}\underline{\operatorname{Hom}}_\mathcal{{E}}(C,X_i)\longrightarrow\underline{\operatorname{Hom}}_\mathcal{E}(C,\bigoplus_{i\in I}X_i)
    \]
    in $\underline{\mathcal{C}}_\mathscr{G}$. We claim that $\underline{\vartheta}$ is a bijection. For this purpose it suffices to show that the bijection in \eqref{compact} restricts to a bijection
    \[
    \vartheta|_\mathcal{I}\colon \bigoplus_{i\in I}\mathcal{I}(C,X_i)\longrightarrow\mathcal{I}(C,\bigoplus_{i\in I}X_i)\,,
    \]
    where $\mathcal{I}(-,-)$ denotes the ideal of morphisms in $\mathcal{E}$ that factor through a projective-injective object. 
First we argue that $\vartheta|_\mathcal{I}$ maps the subgroup $\bigoplus_{i \in I}\mathcal{I}(C,X_i)$ to the subgroup $\mathcal{I}(C,\bigoplus_{i \in I}X_i)$. Let $\alpha = (\alpha_i)_{i \in I}$ be an element in $\bigoplus_{i \in I}\mathcal{I}(C,X_i)$, that is, the set $I_0 = \{i \in I \,|\ \alpha_i \neq 0\}$ is finite and each $\alpha_i \colon C \to X_i$ admits a factorization
\begin{equation*}
 \xymatrix@!=1pc{
 C \ar@{.>}[dr]_-{\varphi_i} \ar[rr]^-{\alpha_i} & & X_i
 \\
 & W_i \ar@{.>}[ur]_-{\psi_i} &
 }
\end{equation*}
through some object $W_i \in \Prj{\mathcal{E}}$. Note that for $i \notin I_0$ we can take $W_i=0$ as $\alpha_i=0$. Let $\varepsilon_{\!j} \colon W_{\!j} \to \bigoplus_{i \in I}W_i$ be the canonical injection. The commutative diagram above yields commutativity of the diagram
\begin{equation*}
 \xymatrix@!=1pc{
 C \ar[dr]_(0.4){\sum_{i \in I}\varepsilon_i\mspace{1mu}\varphi_i} \ar[rr]^-{\sum_{i \in I}\iota_i\mspace{1mu}\alpha_i} & & \bigoplus_{i \in I}X_i
 \\
 & \bigoplus_{i \in I}W_i
 \ar[ur]_(0.55){\bigoplus_{i \in I}\psi_i} &
 }
\end{equation*}
so the morphism $\vartheta|_\mathcal{I}(\alpha) = \sum_{i \in I}\iota_i\,\alpha_i$ factors through the object $W=\bigoplus_{i \in I}W_i \in \Prj{\mathcal{E}}$ (note that $W$ is equal to the finite direct sum $\bigoplus_{i \in I_0}W_i$). Thus $\vartheta|_\mathcal{I}(\alpha)$ is in $\mathcal{I}(C,\bigoplus_{i \in I}X_i)$.

It remains to argue that $\vartheta|_\mathcal{I}^{-1}$, that is, the inverse of the homomorphism from \eqref{compact}, maps $\mathcal{I}(C,\bigoplus_{i \in I}X_i)$ to $\bigoplus_{i \in I}\mathcal{I}(C,X_i)$. Let \smash{$\pi_{\!j} \colon \bigoplus_{i \in I}X_i \to X_{\!j}$} be the canonical projection given by \smash{$\pi_{\!j}\,\iota_{\!j} = \mathrm{id}_{X_{\!j}}$} and \smash{$\pi_{\!j}\,\iota_i = 0$} if $i \neq j$, and note that $\vartheta|_\mathcal{I}^{-1}$ maps a morphism $\beta \colon C \to \bigoplus_{i \in I}X_i$ to the family $(\pi_i\,\beta)_{i \in I}$. (In general, this family only belongs to the product $\prod_{i \in I}\Hom{\mathcal{E}}{C}{X_i}$, but as $\vartheta$ is assumed to be bijective, it actually belongs to the coproduct $\bigoplus_{i \in I}\Hom{\mathcal{E}}{C}{X_i}$.) Now, let $\beta$ be an element in $\mathcal{I}(C,\bigoplus_{i \in I}X_i)$, that is, $\beta \colon C \to \bigoplus_{i \in I}X_i$ is a morphism in $\mathcal{E}$ that factors through some object $W$ in $\Prj{\mathcal{E}}$. Evidently each $\pi_i\,\beta$ also factors through $W$, so $\vartheta|_\mathcal{I}^{-1}(\beta) = (\pi_i\,\beta)_{i \in I}$ belongs to $\bigoplus_{i \in I}\mathcal{I}(C,X_i)$, as desired.   
\end{proof}

\begin{thm}\label{thm:compact_generation}
     Let be $\mathcal{E}$ be as in the \stpref{loc_coh}. If $\mathscr{G}$ is a set of finitely presented Gorenstein projective objects, then the homotopy category $\operatorname{Ho}(\mathcal{E})$ is compactly generated, that is,
     \[
     \operatorname{Ho}(\mathcal{E})\,=\,\operatorname{Loc}(\mathcal{S}_\mathscr{G})\quad\text{and}\quad\operatorname{Ho}(\mathcal{E})^c\,=\,\operatorname{Thick}(\mathcal{S}_\mathscr{G}).
     \]
\end{thm}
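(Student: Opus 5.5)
Our plan has two steps: show that $\mathcal{S}_\mathscr{G}$ consists of compact objects, and show that it generates $\operatorname{Ho}(\mathcal{E})$ in the sense that its right orthogonal (closed under shifts) vanishes. Theorem~\ref{thm:ravenel-neeman} then gives both displayed equalities. The category $\operatorname{Ho}(\mathcal{E})\simeq\underline{\mathcal{C}}_\mathscr{G}$ has set-indexed coproducts, computed in $\mathcal{E}$, since $\mathcal{C}_\mathscr{G}$ and $\Prj{\mathcal{E}}$ are closed under coproducts, as observed in the proof of Lemma~\ref{lem:compacts}.

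For compactness we first check that each $\sigma^nG$ is cofibrant. It lies in $\mathcal{S}_\mathscr{G}$, so $\sigma^nG\in{}^\perp(\mathcal{S}_\mathscr{G}^\perp)=\mathcal{C}_\mathscr{G}$. Next we need $\sigma^nG$ to be finitely presented. This is the point needing care, since only $G=\sigma^0G$ is assumed finitely presented. In the triangulated category $\underline{\mathcal{C}}_\mathscr{G}$, the suspension $\Sigma$ and its inverse $\Omega$ are given by cosyzygies and syzygies with respect to the projective-injectives $\Prj{\mathcal{E}}$. The conflations $\sigma^{n}G\rightarrowtail P^n\twoheadrightarrow\sigma^{n+1}G$ coming from the totally acyclic complex $P_G$ are conflations in the Frobenius category $\mathcal{C}_\mathscr{G}$, so $\sigma^nG\cong\Sigma^nG$ in $\operatorname{Ho}(\mathcal{E})$. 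Compactness is invariant under $\Sigma^{\pm1}$, so it suffices that $G$ is compact, and this is Lemma~\ref{lem:compacts} since $G$ is finitely presented and cofibrant. Hence we never need the individual syzygies to be finitely presented. With this choice, $\mathcal{S}_\mathscr{G}$ and $\{\Sigma^nG\}$ coincide up to isomorphism in $\operatorname{Ho}(\mathcal{E})$.

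For generation, let $X\in\operatorname{Ho}(\mathcal{E})$ with $\operatorname{Hom}_{\operatorname{Ho}}(\Sigma^nG,X)=0$ for all $G\in\mathscr{G}$ and $n\in\mathbb{Z}$; we must show $X\cong0$, i.e.\ $X\in\mathcal{W}_\mathscr{G}$. Take $X$ cofibrant. The key identification is
\[
\underline{\operatorname{Hom}}_{\mathcal{E}}(\sigma^{n}G,X)\cong\Ext{\mathcal{E}}{1}{\sigma^{n+1}G}{X}.
\]
To get it, apply $\Hom{\mathcal{E}}{-}{X}$ to $\sigma^{n}G\rightarrowtail P^n\twoheadrightarrow\sigma^{n+1}G$ and use $\Ext{\mathcal{E}}{1}{P^n}{X}=0$. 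The cokernel of $\Hom{\mathcal{E}}{P^n}{X}\to\Hom{\mathcal{E}}{\sigma^nG}{X}$ is the group of morphisms modulo those factoring through $P^n$. Because $P^n$ is projective-injective in $\mathcal{C}_\mathscr{G}$, every map $\sigma^nG\to W$ with $W\in\Prj{\mathcal{E}}$ extends along the inflation into $P^n$. So factoring through $P^n$ is the same as factoring through any projective, and this cokernel is exactly $\underline{\operatorname{Hom}}$. The vanishing hypothesis over all $n$ then gives $\Ext{\mathcal{E}}{1}{\sigma^mG}{X}=0$ for all $m$, i.e.\ $X\in\mathcal{S}_\mathscr{G}^\perp=\mathcal{W}_\mathscr{G}$. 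Hence $X\in\mathcal{C}_\mathscr{G}\cap\mathcal{W}_\mathscr{G}=\Prj{\mathcal{E}}$, which is zero in the stable category.

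The vanishing of the orthogonal gives $\operatorname{Loc}(\mathcal{S}_\mathscr{G})=\operatorname{Ho}(\mathcal{E})$ by the standard argument for a set of compact objects: $\operatorname{Loc}(\mathcal{S}_\mathscr{G})$ is compactly generated, so by Brown representability its inclusion has a right adjoint. Any object then fits in a triangle whose third term lies in the orthogonal, which is zero. Theorem~\ref{thm:ravenel-neeman} then yields $\operatorname{Ho}(\mathcal{E})^c=\operatorname{Thick}(\mathcal{S}_\mathscr{G})$. The main obstacle is the compactness of the non-zero syzygies, which the identification $\sigma^nG\cong\Sigma^nG$ handles. The second delicate point is the Ext/stable-Hom identification, which relies on the hereditary Frobenius structure of $\mathcal{C}_\mathscr{G}$.
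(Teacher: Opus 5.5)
Your proposal is correct and follows essentially the paper's proof. It gets compactness of the syzygies from $\sigma^nG\cong\Sigma^nG$ in $\operatorname{Ho}(\mathcal{E})$ together with Lemma~\ref{lem:compacts}; it gets generation by turning vanishing stable Hom out of $\mathcal{S}_\mathscr{G}$ into vanishing of $\operatorname{Ext}^1_{\mathcal{E}}$ against a cofibrant representative, which then lies in $\mathcal{C}_\mathscr{G}\cap\mathcal{W}_\mathscr{G}=\Prj{\mathcal{E}}$; and it finishes with Ravenel--Neeman. The only difference is cosmetic: you derive $\underline{\operatorname{Hom}}_{\mathcal{E}}(\sigma^nG,X)\cong\Ext{\mathcal{E}}{1}{\sigma^{n+1}G}{X}$ directly from the conflation $\sigma^nG\rightarrowtail P^n\twoheadrightarrow\sigma^{n+1}G$, shifting the source, whereas the paper shifts the target and uses $\Hom{\operatorname{Ho}(\mathcal{E})}{G}{Y}\cong\Ext{\mathcal{E}}{1}{G}{\textbf{c}(\Sigma^{-1}Y)}$ (and note that the extension of maps $\sigma^nG\to W$ along that inflation holds because $W\in\Prj{\mathcal{E}}$ is injective in $\mathcal{C}_\mathscr{G}$, not because of any property of $P^n$).
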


\begin{proof}
    By \cite[Dfn.~1.7]{MR1308405} to prove that $\operatorname{Ho}(\mathcal{E})$ is compactly generated we have to show that there exist a set $\mathcal{S}$ of compact objects such that, for every object $S$ in $\mathcal{S}$ and every object $X$ in $\operatorname{Ho}(\mathcal{E})$ the following holds:
    \[
    \Hom{\operatorname{Ho}(\mathcal{E})}{S}{X}=0 \implies X=0.
    \]
    
    Our candidate set of compacts is $\mathcal{S}_\mathscr{G}=\{\sigma^nG\,|\, G\in\mathscr{G},\,n\in\mathbb{Z}\}$. First notice that $\mathcal{S}_\mathscr{G}\subset\mathcal{C}_\mathscr{G}$ by construction since $\mathcal{S}_\mathscr{G}\subset {}^\perp(\mathcal{S}_\mathscr{G}^\perp)=\mathcal{C}_\mathscr{G}$. {\color{black}Notice that, \emph{a priori}, the syzygies of the objects $G$ in $\mathscr{G}$, should not be finitely presented. However, for a given $G$ in $\mathscr{G}$, there exist a totally acyclic projective resolution
    \small
 \[\begin{tikzcd}
	{P^\bullet:} & \cdots & {P^{n-1}} && {P^n} && {P^{n+1}} & \cdots \\
	&&& {Z^nP^\bullet=\sigma^nG} && {Z^{n+1}P^\bullet=\sigma^{n+1}G}
	\arrow[from=1-2, to=1-3]
	\arrow[from=1-3, to=1-5]
	\arrow[from=1-3, to=2-4]
	\arrow[from=1-5, to=1-7]
	\arrow[from=1-5, to=2-6]
	\arrow[from=1-7, to=1-8]
	\arrow[from=2-4, to=1-5]
	\arrow[from=2-6, to=1-7]
\end{tikzcd}\,,\]
\normalsize
and thus, for every $n\in\mathbb{Z}$, there exist conflations
\[
0\longrightarrow Z^nP^\bullet\longrightarrow P^n\longrightarrow Z^{n+1}P^\bullet\longrightarrow0,
\]
whose images in the homotopy category $\operatorname{Ho}(\mathcal{E})$ gives $Z^{n+1}P^\bullet\,\cong\,\Sigma Z^n P^\bullet\,\cong\, \Sigma^{n+1}G$. For any $G$ in $\mathscr{G}$ and any $X$ in $\operatorname{Ho}(\mathcal{E})$, one can compute easily that,
\[
\Hom{\operatorname{Ho}(\mathcal{E})}{\sigma^nG}{X}\,\cong\,\Hom{\operatorname{Ho}(\mathcal{E})}{Z^nP^\bullet}{X}\,\cong\, \Hom{\operatorname{Ho}(\mathcal{E})}{G}{\Sigma^{-(n+1)}X}\,.
\]
Notice that since $G$ is compact by Lemma \ref{lem:compacts}, the images of the syzygies $\sigma^nG$ are compact as well in the homotopy category.
} It remains to show that they also generate $\operatorname{Ho}(\mathcal{E})$ as a triangulated category.

    Let $G$ be an object in $\mathcal{S}_\mathscr{G}$ and $Y$ an object in $\mathcal{E}$. One has,
    \[
    \Hom{\operatorname{Ho}(\mathcal{E})}{G}{Y}\,\cong\, \Hom{\operatorname{Ho}(\mathcal{E})}{G}{\Sigma(\Sigma^{-1}Y)}\,\cong\, \Ext{\mathcal{E}}{1}{G}{\textbf{c}(\Sigma^{-1}Y)}\,,
    \]
    where $\textbf{c}(\Sigma^{-1}Y)$ is the cofibrant (in fact bifibrant) replacement of the object $\Sigma^{-1}Y$.  Assuming that $\Hom{\operatorname{Ho}(\mathcal{E})}{G}{Y}$ vanishes for every $G$ in $\mathcal{S}_\mathscr{G}$, we get that $\Ext{\mathcal{E}}{1}{G}{\textbf{c}(\Sigma^{-1}Y)}=0$, and thus $\textbf{c}(\Sigma^{-1}Y)$ is belongs to the class $\mathcal{W}_\mathscr{G}$. Consequently, in $\operatorname{Ho}(\mathcal{E})$ the object $\textbf{c}(\Sigma^{-1}Y)$ is isomorphic to the zero object and so does $\Sigma^{-1}Y$ which is weakly equivalent to its cofibrant replacement. This in turn implies that $Y=0$ in $\operatorname{Ho}(\mathcal{E})$ and $\operatorname{Ho}(\mathcal{E})=\operatorname{Loc}(\mathcal{S}_\mathscr{G})$. The last implication follows from Ravenel-Neeman's Theorem \ref{thm:ravenel-neeman}.
\end{proof}

Notice that exact structures on a given abelian category $\mathcal{E}$ form a lattice with the partial order on the class of exact sequences, given by inclusion of classes of admissible short exact sequences. The maximal element in this exact structure is the abelian exact structure, whereas the minimal element is the split exact structure. Positselski in \cite[Sec.~3]{Positselski_loc_coh} proves that locally coherent exact structures form a sublattice of the lattice of all exact structure whose minimal element is the pure exact structure and the maximal element is the maximal locally coherent exact structure on $\mathcal{E}$. For example, if $\mathcal{E}=\operatorname{Mod}{R}$ is the category of (right) modules over a ring $R$, the maximal locally coherent exact structure coincides with the abelian exact structure if and only if, $R$ is a (right) coherent ring. The result of Theorem \ref{thm:compact_generation} suggest that this sublattice of locally coherent exact structures control in a certain degree the compact generation of the homotopy categories that one can form based on such exact structures. We make this idea a bit more concrete by specialising in the following sections in derived categories and their $Q$-shaped analogues.

\section{Compact generation in the
  \texorpdfstring{$Q$}{Q}-shaped world}
\label{sec:Qompact}

We apply the preceding construction to categories of
$Q$-shaped representations. Throughout the $Q$-shaped setting, we adopt the Setup of \cite[Stp.~2.5]{HJ-JLMS}. Thus $\Bbbk$ is a commutative noetherian hereditary ring, $A$ is a $\Bbbk$-algebra, and $Q$ is a small $\Bbbk$-linear category satisfying the following conditions. It is \emph{Hom-finite}, meaning that each $Q(p,q)$ is finitely generated projective over $\Bbbk$, and
\emph{locally bounded}, meaning that
\[
\mathsf{N}_-(q)=\{p\in Q\mid Q(p,q)\ne0\},
\qquad
\mathsf{N}_+(q)=\{p\in Q\mid Q(q,p)\ne0\}
\]
are finite for every $q$.
There is a \emph{Serre functor}, namely a
$\Bbbk$-linear autoequivalence $\mathbb S$ with natural
isomorphisms
\[
Q(p,q)\cong\Hom{\Bbbk}{Q(q,\mathbb S p)}{\Bbbk}.
\]
We also assume the \emph{strong retraction property}:
there is an ideal $\mathfrak r$ satisfying
\[
Q(q,q)=\Bbbk\operatorname{id}_q\oplus\mathfrak r(q,q),
\qquad
\mathfrak r(p,q)=Q(p,q)\quad(p\ne q).
\]
This ideal is called the \emph{pseudo-radical}, and is
assumed nilpotent. When we additionally require $Q$ to
be \emph{acyclic}, we mean that there is no sequence
\[
q_0\xrightarrow{a_1}q_1\longrightarrow\cdots
\xrightarrow{a_n}q_n=q_0,\qquad n\geq1,
\]
with every $a_i$ a nonzero morphism in $\mathfrak r$,
regardless of whether their composite vanishes
\cite[Def.~5.13]{HJ-TAMS}. The category of \textbf{$Q$-shaped $A$-modules} is defined as
\[
\lMod{Q,A}
 =\operatorname{Fun}_{\Bbbk}(Q,\lMod{A}).
\]
Its morphisms are natural transformations, and its
limits, colimits, and abelian exact sequences are
computed objectwise. The \textbf{support} of $X$ is defined by
$\supp{X}=\{q\in Q\mid X(q)\ne0\}$.
For each $q\in Q$, the evaluation functor belongs to an adjoint triple
$(F_q,\, E_q,\,G_q)$, where
\[
F_q(M)=Q(q,-)\otimes_{\Bbbk}M,
\qquad
E_q(X)=X(q),
\qquad
G_q(M)=\Hom{\Bbbk}{Q(-,q)}{M}.
\]
Serre duality gives $G_q\cong F_{\mathbb S^{-1}q}$.
The \textbf{stalk functor} $S_q(M)$ has value $M$ at
$q$, zero elsewhere, and zero $\mathfrak r$-action.
It belongs to another adjoint triple
$(C_q,\, S_q,\, K_q)$, with
\[
C_q(X)
 =X(q)\Big/\sum_{\substack{p\in Q\\a\in\mathfrak r(p,q)}}
             \operatorname{Im}X(a),
\qquad
K_q(X)
 =\bigcap_{\substack{p\in Q\\a\in\mathfrak r(q,p)}}
             \operatorname{Ker}X(a).
\]
We refer to \cite{HJ-JLMS,HJ-TAMS} or \cite[\S4.1]{SV_Q_shaped_homotopy} for these
constructions and their basic properties.

Let $\theta$ be a class of $A$-modules. A short exact
sequence $\xi$ is \textbf{$\theta$-exact} if
$\Hom{A}{T}{\xi}$ is exact for every $T\in\theta$.
These sequences define an exact structure, denoted by
$\lMode{A}{\theta}$. Its \textbf{objectwise
$\theta$-exact structure} on $\lMod{Q,A}$ consists of
the sequences whose evaluation at every $q$ is
$\theta$-exact; we denote it by $\lMode{Q,A}{\theta}$.
Projectivity and injectivity relative to these
structures will be called often relative projectivity and
relative injectivity. The \textbf{saturation} of $\theta$ is defined in \cite{SV_Q_shaped_homotopy} to be
\[
\operatorname{Sat}(\theta)
 :=\Prjp{\lMode{A}{\theta}}.
\]
Equivalently, it consists of all modules $P$ such that
$\Hom{A}{P}{\xi}$ is exact for every $\theta$-exact
sequence $\xi$.
We call $\theta$ \textbf{saturated} if
$\theta=\operatorname{Sat}(\theta)$.
Although $\theta$ need not contain all relative
projectives, replacing it by its saturation does not
change the exact structure:
\[
\lMode{A}{\theta}
 =\lMode{A}{\operatorname{Sat}(\theta)}.
\]
Thus saturation allows us to identify the chosen class
with the full class of relative projectives
\cite[Def.~4.15]{SV_Q_shaped_homotopy}.

For a class $\mathcal X$, write
$\operatorname{Add}(\mathcal X)$ for the direct summands
of arbitrary coproducts of objects of $\mathcal X$,
and $\operatorname{add}(\mathcal X)$ for the analogous
closure using finite coproducts.
For any set $\varrho$ of modules,
\[
\operatorname{Sat}(\varrho)
 =\operatorname{Add}(\{A\}\cup\varrho).
\]
In particular, if $A\in\varrho$, its saturation is
$\operatorname{Add}(\varrho)$; see
\cite[Lem.~4.16]{SV_Q_shaped_homotopy}.

A morphism $f:T\to M$, with $T\in\theta$, is a
\textbf{$\theta$-precover} if every morphism $T'\to M$
with $T'\in\theta$ factors through $f$.
The class $\theta$ is \textbf{precovering} if every
module admits such a morphism.
If $\theta$ is saturated, it contains $A$, so every
$\theta$-precover is surjective and yields a
$\theta$-exact sequence
\[
0\longrightarrow\operatorname{Ker}f
 \longrightarrow T\stackrel{f}{\longrightarrow}M\longrightarrow0.
\]
Consequently, a saturated precovering class gives
enough projectives in $\lMode{A}{\theta}$.

The functors $F_q,E_q,G_q$ are exact for the relative
structures, and $F_q,E_q$ preserve relative
projectives \cite[Lem.~4.3]{SV_Q_shaped_homotopy}.
If $\theta$ is saturated and precovering, then
$\lMode{Q,A}{\theta}$ also has enough projectives, with
\[
\Prjp{\lMode{Q,A}{\theta}}
 =
 \operatorname{Add}
 \{F_q(T)\mid q\in Q,\ T\in\theta\}.
\]
If $\theta$ is the saturation of a set of finitely
presented modules, both exact categories are efficient
\cite[Prop.~4.17]{SV_Q_shaped_homotopy}. Below we present the main examples of such exact structures.

\begin{exa}
\label{exa:Prj-PPrj-Mod}
  We mention a few canonical examples of saturated classes of $A$-modules $\theta$ for which the exact categories $\lMode{A}{\theta}$ and $\lMode{Q,A}{\theta}$ have enough projectives and are efficient (in particular, they have exact coproducts).
  \begin{prt}
  \item If $\theta = {}_A\operatorname{Prj}$ is the class of projective $A$-modules, then the conflations in $\lMode{A}{\theta}$ are precisely the usual exact sequences in the abelian structure. As $\theta$ is the saturation of the singleton set $\{A\}$ both $\lMode{A}{\theta}$ and $\lMode{Q,A}{\theta}$ are efficient. (This is well-known from \cite[Ex.~2.8(1)]{SaorinStovicek} as, in this case, $\lMode{A}{\theta}$ and $\lMode{Q,A}{\theta}$ are just the abelian categories $\lMod{A}$ and $\lMod{Q,A}$, which are Grothendieck). It is proven in \cite{SV_Q_shaped_homotopy} that the set $\mathscr{G}_\theta:=\mathscr{G}(\theta)=\{\Sq{q}(T)\,|\, T\in\theta,\,q\in Q\}$ generates a complete cotorsion pair in $\lMode{Q,A}{\theta}$ and by extension it corresponds to a Hovey triple, whose homotopy category is the $Q$-shaped derived category of the ring $A$.
  
  \item If $\theta = {}_A\operatorname{PPrj}$ is the class of pure-projective $A$-modules, then the conflations in $\lMode{A}{\theta}$ are precisely the pure exact sequences \cite[Def.~6.5]{mta}. As $\theta$ is the saturation of the class of finitely presented $A$-modules, both $\lMode{A}{\theta}$ and $\lMode{Q,A}{\theta}$ are efficient with enough projectives. 

  \item If $\theta = \lMod{A}$ is the class of all $A$-modules, then the conflations in $\lMode{A}{\theta}$ are precisely the split exact sequences. The class $\theta$ is trivially precovering and, therefore, $\lMode{A}{\theta}$ and $\lMode{Q,A}{\theta}$ have enough projectives. It is also easy to check that the split exact structure in a Grothendieck abelian category (in fact, in any accessible additive category with set-indexed coproducts, see \cite[Ex.~2.8(2)]{SaorinStovicek}), like the category of $A$-modules, is efficient. Thus, both $\lMode{A}{\theta}$ and $\lMode{Q,A}{\theta}$ are efficient. Moreover, $\lMode{A}{\theta}$ and $\lMode{Q,A}{\theta}$ are both Frobenius. It is proven in \cite{SV_Q_shaped_homotopy} that the class $\mathscr{G}_\theta=\{\Sq{q}(T)\,|\, T\in\theta,\,q\in Q\}$ generates a complete cotorsion pair (that coincides with the trivial injective cotorsion pair) in $\lMode{Q,A}{\theta}$. The associated homotopy category was defined in \emph{loc. cit.} to be the $Q$-shaped homotopy category.
  \end{prt}
\end{exa}

We are interested in the compact generation of the homotopy categories that arise from $\theta$-exact structures on $\lMod{A}$. We now specify the hypotheses under which these exact structures give the homotopy categories considered below.

\begin{stp}
\label{stp:HJ+theta}
Let $\Bbbk$ be a hereditary, commutative noetherian ring, and  $Q$ and $A$ be as in \cite[Stp.~2.5]{HJ-JLMS}. Let also $\theta$ to be a class of $A$-modules satisfying the following properties:
\begin{rqm}    
\item $\theta$ is saturated and precovering;
\item the exact category $\lMode{A}{\theta}$ is efficient;
\item the class $\mathscr{G}_\theta:=\mathscr{G}(\theta)=\{\Sq{q}(T)\,|\, T\in\theta,\,q\in Q\}$ generates a complete cotorsion pair in $\lMode{Q,A}{\theta}$.
\end{rqm}
\end{stp}

Under these assumptions, the stalks in
$\mathscr G_\theta$ are Gorenstein projective.
Moreover, in this case one does not need to consider the additional syzygies. Hence the construction recalled
in Subsection~\ref{subsec:SV-models} takes the form
\[
\mathcal W_\theta=\mathscr G_\theta^\perp,
\qquad
\mathcal C_\theta={}^\perp\mathcal W_\theta,
\]
where the orthogonals are computed using the objectwise
$\theta$-exact structure.
The resulting hereditary projective model structure
has Hovey triple
\(
(\mathcal C_\theta,\mathcal W_\theta,
 \lMode{Q,A}{\theta}),
\)
and we write
\[
\QST{Q}{A}
 :=\operatorname{Ho}(\lMode{Q,A}{\theta})
 \simeq\underline{\mathcal C}_\theta;
\]
for the resulting homotopy category, see \cite[Cor.~4.12, Lem.~5.1, Thm.~5.2]{SV_Q_shaped_homotopy}.
In particular, $\mathcal C_\theta$ is Frobenius with
projective-injectives
$\Prjp{\lMode{Q,A}{\theta}}$.
When $\theta$ is the saturation of a set of finitely
presented modules, the conditions of
Setup~\ref{stp:HJ+theta} hold by
\cite[Prop.~4.17(d), Lem.~6.7]{SV_Q_shaped_homotopy}.

For the cohomology notation used below, put
$S\langle q\rangle=S_q(\Bbbk)$ in $\lMod{Q}$ and write
\[
\sHH{i}{\stalkco{q}}{n}(Y)
 :=
 \Ext{Q}{i}{\sigma^nS\langle q\rangle}{Y},
\qquad i>0,\quad n\in\mathbb Z.
\]
Here the cycles are taken in chosen totally acyclic
projective complexes in $\lMod{Q}$, and Ext is computed
in its abelian exact structure.
For $T\in\lMod{A}$ and $X\in\lMod{Q,A}$, the expression
$\Hom{A}{T}{X}$ denotes the $Q$-module obtained by
applying $\Hom{A}{T}{-}$ objectwise.

\begin{prp}\label{prp:comp_gen_q_homotopy}
    Let $\theta$ be the saturation of a set $\varrho$  of finitely presented $A$-modules containing the ring $A$ and satisfying the Setup \ref{stp:HJ+theta}. The following hold.
    \begin{prt}
        \item If $M\in\varrho$ then $S_q(M)$ is a compact object in $\lMode{Q,A}{\theta}$;
        \item The set $\{S_q(M)\in\QST{Q}{A}\,|\, q\in Q,\,M\in\varrho\}$ is a set of compact generators of~ $\QST{Q}{A}$.
    \end{prt}
\end{prp}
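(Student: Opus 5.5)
Part (a) says that $S_q(M)$ is finitely presented in $\lMod{Q,A}$; the cofibrancy of $S_q(M)$ then lets Lemma~\ref{lem:compacts} apply. We use the adjoint triple $(C_q,S_q,K_q)$. For any $X$,
\[
\Hom{Q,A}{S_q(M)}{X}\cong\Hom{A}{M}{K_q(X)},\qquad K_q(X)=\bigcap_{p\in Q,\ a\in\mathfrak r(q,p)}\operatorname{Ker}X(a).
\]
Directed colimits in $\lMod{Q,A}$ are computed objectwise. Since $Q$ is locally bounded and Hom-finite, $\mathsf N_+(q)$ is finite and each $\mathfrak r(q,p)$ is a finitely generated $\Bbbk$-module, so the intersection may be taken over finitely many generators $a$. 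Hence $K_q(X)$ is a finite limit, namely the kernel of $X(q)\to\bigoplus_{j}X(p_j)$. Finite limits commute with directed colimits of modules, so $K_q$ preserves directed colimits. Because $M$ is finitely presented, $\Hom{A}{M}{-}$ also preserves them. Therefore $\Hom{Q,A}{S_q(M)}{-}$ commutes with directed colimits, and $S_q(M)$ is finitely presented. The exact structure plays no role here, since compactness in $\mathcal E$ in the sense of the preliminaries is defined via Hom and directed colimits only.

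For (b), we verify Setup~\ref{stp:loc_coh} for $\mathcal E=\lMode{Q,A}{\theta}$. The category $\lMod{Q,A}$ is locally finitely presented (it is a module category over a ring with enough idempotents, generated by the finitely presented $F_p(A)$). It is weakly idempotent complete because it is abelian and every $\theta$-exact structure is closed under retractions. It has enough projectives because $\theta$ is saturated and precovering. The hereditary projective model structure with Hovey triple $(\mathcal C_\theta,\mathcal W_\theta,\lMode{Q,A}{\theta})$ exists by the discussion after Setup~\ref{stp:HJ+theta}, using \cite[Thm.~5.2]{SV_Q_shaped_homotopy}. The stalks $S_q(M)$, for $M\in\theta$, are Gorenstein projective, and they lie in $\mathcal C_\theta$ since they lie in ${}^\perp(\mathscr G_\theta^\perp)$.

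The main obstacle is that $\mathscr G_\theta$ is indexed by all of $\theta=\operatorname{Add}(\varrho)$, not by the set $\varrho$, and Theorem~\ref{thm:compact_generation} additionally uses all syzygies. We handle both points as follows.
\begin{itemize}
\item[(i)] \emph{Reducing from $\theta$ to $\varrho$.} We have $S_q(\bigoplus M_i)=\bigoplus S_q(M_i)$, and $S_q$ preserves summands. Since Ext in the first variable turns coproducts into products, $\mathscr G_\theta^\perp=\{S_q(M)\mid q\in Q,\ M\in\varrho\}^\perp$. So the cotorsion pair, and hence $\mathcal W_\theta$, is generated by the set $\mathcal S_\varrho$.
\item[(ii)] \emph{Dispensing with syzygies.} Following \cite{SV_Q_shaped_homotopy}, $\Sigma^{\pm1}S_q(M)$ in $\QST{Q}{A}$ is expressed via stalks at $\mathbb S^{\pm1}q$ up to the objects $F_p(T)$, which are projective. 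Concretely, a projective (co)resolution of $S\langle q\rangle$ in $\lMod{Q}$ has terms sums of $Q(p,-)$, and tensoring with $M$ gives conflations whose middle terms are relative projective. Thus the objects $\sigma^nS_q(M)$ lie in $\operatorname{Thick}(\mathcal S_\varrho)$, or at least the vanishing of all $\Hom{}{S_{q'}(M)}{\Sigma^nY}$ already forces $\operatorname{Ext}^1(S_q(M),\mathbf c Y)=0$ for all $q,M$.
\end{itemize}

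With this in hand, the generation argument of Theorem~\ref{thm:compact_generation} goes through verbatim. If $\Hom{\QST{Q}{A}}{S_q(M)}{\Sigma^nY}=0$ for all $q$, $M\in\varrho$ and $n$, then the cofibrant replacement $\mathbf c(\Sigma^{-1}Y)$ lies in $\mathcal S_\varrho^\perp=\mathcal W_\theta$, so $Y\cong0$. Compactness of the set comes from (a) together with Lemma~\ref{lem:compacts}. Theorem~\ref{thm:ravenel-neeman} then gives both equalities.

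If step (ii) proves delicate, the fallback is simply to take all syzygies of the stalks as the generating set. This is still a set, and each syzygy is finitely presented: it arises from a finitely presented totally acyclic complex obtained by tensoring a finitely generated projective $Q$-resolution with $M$. Theorem~\ref{thm:compact_generation} then applies directly to this set.
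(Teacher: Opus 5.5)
Your proposal is correct and follows essentially the paper's route. You get compactness of $S_q(M)$ from $\Hom{Q,A}{S_q(M)}{-}\cong\Hom{A}{M}{K_q(-)}$; your finite-limit description of $K_q$ even gives preservation of directed colimits, whereas the paper only checks coproducts. You then apply Lemma~\ref{lem:compacts} and the generation argument of Theorem~\ref{thm:compact_generation}, and your step (i) spells out the reduction from $\theta$ to the set $\varrho$ (using exactness of coproducts, available by efficiency), which the paper's proof takes for granted.

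Step (ii) is superfluous and should be dropped. Syzygies are just shifts of the stalks in $\QST{Q}{A}$, and generation only needs $\mathcal W_\theta=\mathscr G_\theta^\perp=\mathcal S_\varrho^\perp$, which the setup already provides. As written, its description of $\Sigma^{\pm1}S_q(M)$ via stalks at $\mathbb S^{\pm1}q$ is modelled on $Q^{\mathrm{cpx}}$ and is not justified for general $Q$.
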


\begin{proof}
    \proofoftag{a} Assume that $M$ lies in $\varrho$ and fix a $q\in Q$. There is a natural isomorphism 
    \[
    \Hom{Q,A}{S_q(M)}{-}\,\cong\,\Hom{A}{M}{K_q(-)}.
    \]
    The functor $K_q$ preserves coproducts by \cite[Lem.~4.6]{HJ-TAMS} (which is independent of the exact structure) and $\Hom{A}{M}{-}$ preserves coproducts since $M\in\varrho\subseteq {}_A\operatorname{mod}$.

    \proofoftag{b} The cotorsion pair $(\mathcal{C}_\theta,\mathcal{W}_\theta)$ is generated by the set of compact objects $\{S_q(M)\in\lMode{Q,A}{\theta}\,|\, q\in Q,\,M\in\varrho\}$ in $\lMode{Q,A}{\theta}$. By Theorem \ref{thm:compact_generation} the homotopy category $\QST{Q}{A}$ is compactly generated by the set of compact objects $\{S_q(M)\in\QST{Q}{A}\,|\, q\in Q,\,M\in\varrho\}$.
\end{proof}

\begin{exa}
    Let $\theta=\lPrj{A}$ be the class of all projective $A$-modules, meaning that $\lMode{Q,A}{\theta}$ is simply the abelian category $\lMod{Q,A}$. In \cite[Ex.~5.4]{SV_Q_shaped_homotopy}, it is shown that picking the class of relative projective objects $\mathscr{T}$ to be the singleton $\{A\}$ then the set of Gorenstein projective objects that generates the abelian model structure is given by $\mathscr{G}=\{S_q(A)\,|\,q\in Q\}$. In this case the homotopy category identifies with the $Q$-shaped derived category $\QSD{Q}{A}$ of Holm and J{\o}rgensen. Notice that \prpref{comp_gen_q_homotopy} implies that the $Q$-shaped derived category is compactly generated since $\{A\}$ is a set (in fact a singleton) of finitely presented $A$-modules. This of course is an already known result, see \cite[Thm.~D]{HJ-TAMS}.
\end{exa}

\subsection{The \texorpdfstring{$Q$}{Q}-shaped pure derived category of a ring}\label{subsec:Q-shaped-pure}

In Proposition \ref{prp:comp_gen_q_homotopy} we present a sufficient condition for a $Q$-shaped $\theta$-homotopy category to be compactly generated. In this subsection we are going to construct another example of such homotopy category by endowing $\lMod{Q,A}$ with the objectwise pure exact structure. \,

Consider the set $\varrho$ of all finitely presented $A$-modules, that is $\varrho={}_A\operatorname{mod}$ and let $\theta$ be the saturation of $\varrho$. By Example \ref{exa:Prj-PPrj-Mod} one has that $\theta$ is the class of all pure-projective $A$-modules. In particular, $\lMode{A}{\theta}$ identifies with the pure exact structure on $\lMod{A}$. In this case we call the objectwise $\theta$-exact structure on $\lMode{Q,A}{\theta}$, the \textbf{objectwise pure exact structure}. To avoid confusion with other $\theta$-exact structures, when referring to the (objectwise) pure exact structure we will decorate our categories from now on with $\varrho$ instead of $\theta$, e.g. we will write $\lMode{Q,A}{\varrho}$.\,

In view of \cite[Thm.~5.2]{SV_Q_shaped_homotopy} taking as $\mathscr{T}$ the set $\varrho$, one has that $\mathscr{G}_\varrho=\{S_q(M)\,|\,M\in\varrho,\,q\in Q\}$ generates a complete hereditary cotorsion pair and by extension a hereditary projective Hovey triple $(\mathcal{C}_\varrho,\mathcal{W}_\varrho,\lMode{Q,A}{\varrho})$. The subcategory of all cofibrant objects $\mathcal{C}_\varrho$ is a Frobenius exact category whose projective-injective objects are given by $\operatorname{Add}(\{F_q(T)\,|\, T\in{}_A\operatorname{PPrj},\,q\in Q\})$. We call the stable category $\underline{\mathcal{C}}_\varrho$ the \textbf{$Q$-shaped pure derived category} of $A$ and we denote it by $\QSPD{Q}{A}$. \,

The construction of the projective model of the $Q$-shaped pure derived category that we introduced in the paragraph above, follows directly from an application of Theorem \cite[Thm.~5.2]{SV_Q_shaped_homotopy}. In theory, one could construct also the injective model of the $Q$-shaped pure derived category using the dual theorem in \emph{loc. cit.}, however, that theorem does not guarantee the completeness of the associated cotorsion pair. In what follows we construct manually an injective model for the $Q$-shaped pure derived category. \,

In \emph{loc. cit.} the authors give an explicit description of the pure-trivial objects $\mathcal{W}_\varrho$. In particular, one has
\[
\mathcal{W}_\varrho\,=\,\mathscr{G}_\varrho^\perp\,=\,\{X\in\lMode{Q,A}{\varrho}\,|\, \Ext{\exact{\varrho}}{1}{S_q(F)}{X}=0, \text{ for all }q\in Q,\text{ and all }F\in\varrho\},
\]
where $\Ext{\exact{\varrho}}{1}{-}{-}$ denotes the Ext functor with respect to the objectwise pure exact structure. Denote by $\mathcal{F_\varrho}:=\mathcal{W}_\varrho^\perp$ the subcategory of the (candidate) fibrant objects. \,

Let $\lambda$ be a regular cardinal. In a locally finitely presented Grothendieck category $\mathcal{M}$ (for example, $\mathcal{M}=\lMod{Q,A}$), a class of objects $\mathcal{K}$ is said to be \textbf{$\lambda$-Kaplansky} if for every $K\in\mathcal{K}$ and every $\lambda$-generated subobject (in the sense of \cite[Dfn.~1.67]{AdamekRosicky}) $X\subseteq K$, there exists a $\lambda$-presented subobject $Y$ of $F$ such that $X\subseteq Y\subseteq K$ and $Y,\, K/Y\in \mathcal{K}$. One calls $\mathcal{K}$ a \textbf{Kaplansky class}, if it is $\lambda$-Kaplansky for some regular cardinal $\lambda$. In this generality the definition is due to Gillespie \cite[Dfn.~4.9]{MR2342555} (we refer also to \cite[Dfn.~2.6]{Sto2013a}). Historically, Kaplansky classes were introduced by Enochs and L\'opez-Ramos in \cite{EEnJLR02} for the special case where $\mathcal{M}$ is the module category of a ring. Kaplansky classes play an important role in deconstructibility and completeness of cotorsion pairs, in exact categories of Grothendieck type \cite{Stovicek2013}. It is worth mentioning that a class $\mathcal{K}$ in $\mathcal{M}$ is Kaplansky, if it is closed under pure subobjects and pure quotients, see \cite[Proof of Thm.~A.3]{HJ-JLMS}. For module categories this was initially shown by Holm and J\o rgensen in \cite{HJ08}.

\begin{dfn}\label{dfn:cat_pure}
    A short exact sequence $\xi:0\to X\to Y\to Z\to0$ in $\lMod{Q,A}$ is said to be a \textbf{categorical pure exact sequence}, if $\Hom{Q,A}{K}{\xi}$ is exact, for all finitely presented objects $K$ in $\lMod{Q,A}$. 
\end{dfn}

Equivalently, one can define categorical pure exact sequences as filtered colimits of split exact sequences, as usual in Grothendieck categories. When referring to categorical purity we will add ``categorical" in every notion that we have already defined so far (e.g. categorical pure-projective, etc.). We note that categorical purity is stronger than the objectwise purity we mentioned above. Indeed, the evaluation functor $\operatorname{ev}_q(-)\colon\lMod{Q,A}\to\lMod{A}$ preserves split exact sequences and colimits, thus categorical purity implies objectwise purity.

\begin{prp}\label{prp:injective_pure}
    Let $\varrho=\lsmod{A}$ denote the set of all finitely presented $A$-modules. Consider the set $\mathscr{G}_\varrho=\{S_q(M)\,|\,M\in\varrho,\,q\in Q\}$, the class
    \[
\mathcal{W}_\varrho\,=\,\mathscr{G}_\varrho^\perp\,=\,\{X\in\lMode{Q,A}{\varrho}\,|\, \Ext{\exact{\varrho}}{1}{S_q(F)}{X}=0, \text{ for all }q\in Q,\text{ and all }F\in\varrho\},
\]
and let $\mathcal{F}_\varrho=\mathcal{W}_\varrho^\perp$. The following hold.
\begin{prt}
    \item $\mathcal{W}_\varrho$ is a thick subcategory of $\lMode{Q,A}{\varrho}$ and it is closed under filtered colimits;
    \item $\mathcal{W}_\varrho$ is closed under categorical pure subobjects and categorical pure quotients, that is, $\mathcal{W}_\varrho$ is a Kaplansky class;
    \item $\mathcal{W}_\varrho\cap\mathcal{F}_\varrho=\lInj{Q,A}^\varrho$;
    \item The cotorsion pair $(\mathcal{W}_\varrho,\mathcal{F}_\varrho)$ is complete and hereditary.
\end{prt}
\end{prp}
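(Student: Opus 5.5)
The plan is to reduce everything to the long exact $\operatorname{Ext}$-sequences of the objectwise pure exact structure $\lMode{Q,A}{\varrho}$. The key input is that $\mathcal{W}_\varrho=\mathscr{G}_\varrho^\perp$ is the right half of the hereditary cotorsion pair $(\mathcal{C}_\varrho,\mathcal{W}_\varrho)$. Hence $X\in\mathcal{W}_\varrho$ if and only if $\Ext{\exact{\varrho}}{i}{S_q(F)}{X}=0$ for all $i>0$. Equivalently, $\Ext{\exact{\varrho}}{1}{\sigma^nS_q(F)}{X}=0$ for all $n\in\mathbb{Z}$, where the syzygies are taken in a totally acyclic complex $T^\bullet_{q,F}$ of relative projectives. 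For $F\in\lsmod{A}$ I will choose $T^\bullet_{q,F}$ with finitely presented terms, of the form $\bigoplus F_p(F')$ with $F'$ finitely presented. This is possible because $Q$ is Hom-finite and locally bounded, so one can take $T\langle q\rangle\otimes_{\Bbbk}F$ for a totally acyclic complex $T\langle q\rangle$ of finitely generated projectives in $\lMod{Q}$. Then all $\sigma^nS_q(F)$ are finitely presented in $\lMod{Q,A}$, and
\[
X\in\mathcal{W}_\varrho\iff\Hom{Q,A}{T^\bullet_{q,F}}{X}\ \text{is acyclic for all } q\in Q,\ F\in\lsmod{A}.
\]

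For (a), thickness follows from the long exact $\operatorname{Ext}$-sequence in all degrees, using heredity. Closure under direct summands is clear. For filtered colimits, the terms of $T^\bullet_{q,F}$ are finitely presented, so $\Hom{Q,A}{T^\bullet_{q,F}}{-}$ commutes with filtered colimits, and filtered colimits of acyclic complexes of abelian groups are acyclic.

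For (b), let $\xi\colon 0\to X\to Y\to Z\to 0$ be categorical pure with $Y\in\mathcal{W}_\varrho$. It is objectwise pure, hence a conflation. Because each $\sigma^nS_q(F)$ is finitely presented in $\lMod{Q,A}$, the map $\Hom{Q,A}{\sigma^nS_q(F)}{Y}\to\Hom{Q,A}{\sigma^nS_q(F)}{Z}$ is surjective. So the connecting morphism vanishes, and $\Ext{\exact{\varrho}}{1}{\sigma^nS_q(F)}{X}$ injects into $\Ext{\exact{\varrho}}{1}{\sigma^nS_q(F)}{Y}=0$. Thus $X\in\mathcal{W}_\varrho$, and $Z\in\mathcal{W}_\varrho$ by thickness. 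The Kaplansky property then follows from the cited result that classes closed under pure subobjects and pure quotients are Kaplansky \cite[Proof of Thm.~A.3]{HJ-JLMS}.

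For (c), relative injectives lie in $\mathcal{W}_\varrho$, since it is a right orthogonal, and in $\mathcal{F}_\varrho$ trivially. Conversely, let $X\in\mathcal{W}_\varrho\cap\mathcal{F}_\varrho$. I use that $\lMode{Q,A}{\varrho}$ has enough injectives, for instance via products of $G_p$ applied to pure-injective modules, to take a conflation $X\rightarrowtail I\twoheadrightarrow I/X$ with $I$ relatively injective. Then $I/X\in\mathcal{W}_\varrho$ by (a), so $\Ext{\exact{\varrho}}{1}{I/X}{X}=0$, the conflation splits, and $X$ is a summand of $I$.

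For (d), which I expect to be the main obstacle, I would first show that $\mathcal{W}_\varrho$ is deconstructible in the efficient exact category $\lMode{Q,A}{\varrho}$. Take $\lambda$ from (b) and, for $W\in\mathcal{W}_\varrho$, build a continuous chain of categorical pure subobjects by transfinite recursion. Each successive quotient should be $\lambda$-presented and lie in $\mathcal{W}_\varrho$, and each inclusion is an objectwise pure inflation. The delicate point is that the Kaplansky filtration is by categorical pure subobjects, so I must check that its quotients are again categorical pure quotients and hence remain in $\mathcal{W}_\varrho$, and that the unions are conflations in the relative structure. Closure under filtered colimits from (a) handles limit steps. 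Then the cotorsion pair generated by a representative set $\mathcal{S}$ of the $\lambda$-presented objects of $\mathcal{W}_\varrho$ is complete by \cite{SaorinStovicek}, with right class $\mathcal{S}^\perp=\mathcal{W}_\varrho^\perp=\mathcal{F}_\varrho$. Its left class consists of summands of $\mathcal{S}$-filtered objects, and these lie in $\mathcal{W}_\varrho$ by (a), using extensions and filtered colimits. This gives ${}^\perp\mathcal{F}_\varrho=\mathcal{W}_\varrho$. Heredity follows because $\mathcal{W}_\varrho$ is thick, so it is closed under kernels of deflations between its objects, and $\lMode{Q,A}{\varrho}$ has enough projectives.
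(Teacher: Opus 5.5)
Your proposal is correct. On (a) and (c) it follows the paper; (b) and (d) take a genuinely different route.

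In (a), the paper identifies $\operatorname{Ext}^1_{\varrho}(S_q(F),-)$ with $\mathbb{H}^1_{S\langle q\rangle,0}(\operatorname{Hom}_A(F,-))$ and cites \cite[Rmk.~4.7]{HJ-TAMS}. Your finitely presented totally acyclic complex $T\langle q\rangle\otimes_{\Bbbk}F$ makes the same mechanism explicit. Part (c) is the same splitting argument as the paper's.

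One correction to your set-up. Heredity of $(\mathcal{C}_\varrho,\mathcal{W}_\varrho)$ only gives $\operatorname{Ext}^1(\sigma^nS_q(F),X)=0$ for $n\le 0$. The cosyzygies $n>0$ need the identification $\mathscr{G}_\varrho^{\perp}=\mathcal{S}_{\mathscr{G}_\varrho}^{\perp}$ from the SV framework, or equivalently that every $\sigma^nS_q(F)$ is cofibrant. Closure of $\mathcal{W}_\varrho$ under kernels of deflations genuinely uses this, and hence so does your heredity argument in (d). The paper imports it via \cite[Prop.~3.3]{SV_Q_shaped_homotopy}.

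In (b), the paper writes the categorical pure sequence as a filtered colimit of split sequences and combines additivity with the colimit part of (a). You instead use that $S_q(F)$ is finitely presented. Then $\operatorname{Hom}(S_q(F),Y)\to\operatorname{Hom}(S_q(F),Z)$ is onto, the connecting map vanishes, and $\operatorname{Ext}^1(S_q(F),X)$ embeds into $\operatorname{Ext}^1(S_q(F),Y)=0$; $Z$ is then handled by thickness. This is shorter and needs no colimit argument.

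In (d), the paper quotes \cite{Sto2013a} and \cite{Stovicek2013} for deconstructibility and completeness, and \cite[Thm.~5.6]{SV_Q_shaped_homotopy} for heredity. The deconstructibility result quoted concerns filtrations in the ambient abelian category. Completeness for the relative Ext, however, needs filtrations by inflations of $\lMode{Q,A}{\varrho}$, and the paper leaves this compatibility implicit. Your route makes it explicit. You filter by categorical pure subobjects, whose inclusions are then objectwise pure inflations. You then apply Eklof's lemma and the Saor{\'\i}n--{\v{S}}{\v{t}}ov{\'\i}{\v{c}}ek completeness theorem in the efficient category $\lMode{Q,A}{\varrho}$. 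Heredity comes from a resolving left class plus enough projectives.

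The checks you flag do go through:
\begin{itemize}
\item Choose $W_{\alpha+1}$ as the preimage of a $\lambda$-presented pure subobject of $W/W_\alpha$. This keeps each $W_\alpha$ pure in $W$, since composites of pure epimorphisms are pure and directed unions of pure subobjects are pure.
\item Consequently $W/W_\alpha\in\mathcal{W}_\varrho$, and its pure subobject $W_{\alpha+1}/W_\alpha$ lies in $\mathcal{W}_\varrho$ too.
\item The generating hypothesis of the completeness theorem holds because the finitely presented relative projectives $F_p(F)$ are trivial.
\end{itemize}
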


\begin{proof}
    \proofoftag{a} Note that $\mathcal{W}_\varrho$ is thick by \cite[Prop.~3.3]{SV_Q_shaped_homotopy}. Recall also from \emph{loc. cit.} that stalks of relative projective objects are Gorenstein projective. For the closure under filtered colimits, by \cite[Lem.~5.1~\&~Thm.~5.2]{SV_Q_shaped_homotopy} one has for any finitely presented $A$-module $F$ an isomorphism
    \[
    \Ext{\exact{\varrho}}{1}{S_q(F)}{-}\,\cong\, \mathbb{H}^{1}_{S\langle q\rangle,0}(\Hom{A}{F}{-})
   \,,
    \]
   where the latter commutes with filtered colimits in $\lMod{Q}$ by \cite[Rmk.~4.7]{HJ-TAMS}, and by the fact that $F$ is a finitely presented $A$-module. In particular, one has that the stalks $S_q(F)$, $q\in Q$, with $F$ in $\lsmod{A}$, are $\operatorname{FP}_\infty$ objects in $\lMode{Q,A}{\varrho}$.\,
    
    \proofoftag{b} To prove that $\mathcal{W}_\varrho$ is a Kaplansky class, we show it is closed under categorical pure subobjects and categorical pure quotients. In particular, given a categorical pure exact sequence $0\to X'\to X\to X''\to 0$, with $X$ in $\mathcal{W}_\varrho$, we show that $X',\,X''$ lie in $\mathcal{W}_\varrho$, as well. Since, $\lMod{Q,A}$ is locally finitely presented, the given categorical pure exact sequence can be written as a filtered colimit of split exact sequences
    \begin{equation}\label{eq:cat_pure_colim}
    \left\{0\longrightarrow X_i'\longrightarrow X_i\longrightarrow X_i''\longrightarrow0\right\}_{i\in I}.
    \end{equation}
    Note that $\Ext{\exact{\varrho}}{1}{S_q(F)}{-}$ is an additive functor and, by the same argument in the proof of item (a) above, it commutes with filtered colimits. More concretely, since \eqref{cat_pure_colim} splits, we have $$\Ext{\exact{\varrho}}{1}{S_q(F)}{X_i}\cong\Ext{\exact{\varrho}}{1}{S_q(F)}{X'_i}\oplus\Ext{\exact{\varrho}}{1}{S_q(F)}{X''_i},\, i\in I.$$
    Taking, the colimit here, we have 
    \begin{align*}
    \Ext{\exact{\varrho}}{1}{S_q(F)}{X}\,\cong\,\underrightarrow{\lim}\left(\Ext{\exact{\varrho}}{1}{S_q(F)}{X_i}\right) & \,\cong \,\underrightarrow{\lim}\left(\Ext{\exact{\varrho}}{1}{S_q(F)}{X'_i}\right)\,\oplus\,\underrightarrow{\lim}\left(\Ext{\exact{\varrho}}{1}{S_q(F)}{X''_i}\right)\\ &\,\cong\, \Ext{\exact{\varrho}}{1}{S_q(F)}{X'}\,\oplus \,\Ext{\exact{\varrho}}{1}{S_q(F)}{X''}
    \end{align*}
    However, $\Ext{\exact{\varrho}}{1}{S_q(F)}{X}=0$, since $X\in \mathcal{W}_\varrho$, and thus the same holds also for its summands. We conclude that $X',\,X''$ belong to $\mathcal{W}_\varrho$.

    \proofoftag{c} The inclusion $\lInj{Q,A}^\varrho\subseteq\mathcal{W}_\varrho\cap\mathcal{F}_\varrho$ is straightforward to check. Indeed, given a relative injective object $J\in \lInj{Q,A}^\varrho$, both $\Ext{\exact{\varrho}}{1}{S_q(F)}{J}$ and $\Ext{\exact{\varrho}}{1}{W}{J}$ vanish, implying that $J$ is a trivial fibrant object.  For the converse, let $I$ lie in $\mathcal{W}_\varrho\cap\mathcal{F}_\varrho$. Since, $\lMode{Q,A}{\varrho}$ has enough injectives (and projectives), let 
    \[
    \xi\colon 0\longrightarrow I\longrightarrow J\longrightarrow C\longrightarrow0\,,
    \]
    be a conflation with $J\in \lInj{Q,A}^\varrho$ and denote by $C$ the cokernel $\Coker{(I\to J)}$. Since $J$ is a relative injective, it lies $\mathcal{W}_\varrho$. Moreover, by assumption $I\in \mathcal{W}_\varrho$, and thus, so does $C$, by thickness. However, $\xi$ is an element of $\Ext{\exact{\varrho}}{1}{C}{I}$ and the latter vanishes, since $I\in \mathcal{F}_\varrho$. Hence, $\xi$ splits and $I$ is a summand of $J$, and thus it is a relative injective as well.

    \proofoftag{d} Items (a) and (b) show that $\mathcal{W}_\varrho$ is a Kaplansky class that is closed under filtered colimits. By \cite{Sto2013a}, it is a deconstructible class and hence, $(\mathcal{W}_\varrho,\mathcal{W}_\varrho^\perp)$ is a complete cotorsion pair by \cite{Stovicek2013}. The hereditariness of the cotorsion pair follows by \cite[Thm.~5.6]{SV_Q_shaped_homotopy}.
\end{proof}

The corresponding model structure induced by the complete cotorsion pair $(\mathcal{W}_\varrho,\mathcal{F}_\varrho)$ gives the injective model of the $Q$-shaped pure derived category. Indeed, this injective model structure shares the same trivial objects and weak equivalences with the projective model structure we introduced above. Moreover,  the subcategory $\mathcal{F}_\varrho$ of the (bi-)fibrant objects is a Frobenius exact category whose projective/injective objects are precisely the relative injective objects $\lInj{Q,A}^\varrho$. In particular, for the homotopy category one has
\[
\frac{\mathcal{C}_\varrho}{\lPrj{Q,A}^\varrho}\stackrel{\cong}{\longrightarrow}\operatorname{Ho}(\lMod{Q,A}^\varrho)=\QSPD{Q}{A}\stackrel{\cong}{\longleftarrow}\frac{\mathcal{F}_\varrho}{\lInj{Q,A}^\varrho}\,.
\]

Let $\theta$ be a class of objects such that $\lPrj{A}\subseteq\theta$, and assume it satisfies the Setup \ref{stp:HJ+theta}, that is, one has a projective model structure on $\lMode{Q,A}{\theta}$ and moreover, assume that one can also define an injective model structure with the same trivial objects and weak equivalences, and thus the same homotopy category $\mathbf{K}_Q^\theta(A)$. We will call for brevity such model structures a \textbf{homotopically balanced pair of model structures}. Examples, of homotopically balanced pairs of model structures include (but are not limited to): the projective and injective models for the derived, homotopy and pure derived category and their $Q$-shaped analogues. 

\begin{dfn}\label{dfn:relative_gldim}
    Let $(\mathcal{A},\exact{})$ be a weakly idempotent complete exact category with enough projectives. We define the (left) \textbf{relative global dimension} of $(\mathcal{A},\exact{})$ by
    \[
    \operatorname{gl.dim}_{\exact{}}\mathcal{A}=\sup\{\operatorname{pd}_{\exact{}}M\,|\, M\in\mathcal{A}\}
    \]
    where $\operatorname{pd}_{\exact{}}M$ denotes the relative projective dimension of the object $M$, that is the length of a relative projective resolution of $M$.
\end{dfn}

\begin{exa}
    We give a brief list of examples where the notion of relative global dimension is used with a slightly different but more familiar terminology.
    \begin{rqm}
        \item Let $\lMod{A}$ be endowed with the abelian exact structure. The relative projectives in this case are the categorical projectives of the abelian category, and thus, the relative global dimension is the ordinary global dimension.
        \item  Assume now that $\lMod{A}$ is endowed with the split exact structure. In this case the relative global dimension is zero, since every object is projective.
        \item  In the case that $\lMod{A}$ is endowed with the pure exact structure, the notion of (left) relative global dimension coincides with that of (left) pure global dimension. These dimensions are explored in greater detail in the beginning of Section \ref{sec:negative_compact_gen}.
    \end{rqm}
\end{exa}

\begin{lem}\label{lem:proj/inj_char}
     Let $\theta$ be a saturated class of $\lMod{A}$ and assume that it induces a homotopically balanced pair of model structures in $\lMod{Q,A}$, denoted by $(\mathcal{C}_\theta,\mathcal{W}_\theta,\lMode{Q,A}{\theta})$ and $(\lMode{Q,A}{\theta},\mathcal{W}_\theta,\mathcal{F}_\theta)$, for the projective and injective model, respectively. Consider an object $Z$ in $\mathcal{W}_\theta$. The following hold.
     \begin{rqm}
         \item The object $Z$ is projective in $\lMode{Q,A}{\theta}$ if and only if, $C_p(Z)$ is projective in $\lMode{A}{\theta}$, for all $p\in Q$.
         \item The object $Z$ is injective in $\lMode{Q,A}{\theta}$ if and only if, $K_p(Z)$ is injective in $\lMode{A}{\theta}$, for all $p\in Q$.
     \end{rqm}
\end{lem}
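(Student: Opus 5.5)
Both items are proved the same way, with (2) dual to (1), so I treat (1). The forward direction is formal. If $Z$ is projective in $\lMode{Q,A}{\theta}$, then by the description of relative projectives recalled before Example~\ref{exa:Prj-PPrj-Mod}, $Z$ is a summand of a coproduct of objects $F_q(T)$ with $T\in\theta$. The functor $C_p$ is a left adjoint, so it commutes with coproducts and summands. Moreover $C_pF_q(T)\cong (Q(q,p)/\mathfrak r(q,p))\otimes_\Bbbk T$, which is $T$ if $p=q$ and $0$ otherwise. Hence $C_p(Z)\in\operatorname{Add}(\theta)=\theta$ is projective in $\lMode{A}{\theta}$. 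This direction does not even use $Z\in\mathcal W_\theta$.

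For the converse, take $Z\in\mathcal W_\theta$ with every $C_p(Z)\in\theta$. I would build a projective cover-like map. For each $p$, choose $\theta$-precovers and use the adjunction $(F_p,E_p)$. Concretely, I would use the canonical surjection $Z(p)\to C_p(Z)$, which splits as a map of $A$-modules only up to choice, and lift the identity of $C_p(Z)$ through it, since $C_p(Z)$ is $\theta$-projective and the quotient map is $\theta$-exact. This gives maps $F_p(C_p(Z))\to Z$, and summing them yields $\pi\colon P:=\bigoplus_p F_p(C_p(Z))\to Z$ with $P$ relatively projective.

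By construction $C_p(\pi)$ is an isomorphism for every $p$. Using nilpotency of $\mathfrak r$ (a Nakayama-type argument, as in \cite{HJ-TAMS}), $\pi$ is objectwise surjective. I then need it to be a $\theta$-exact epimorphism, i.e.\ $\theta$-split on $\operatorname{Hom}_A(T,-)$ objectwise. I would argue this by filtering along the radical layers and using that each layer map is obtained from the $\theta$-projective $C_p(Z)$.

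Let $K=\Ker\pi$. Then $K\in\mathcal W_\theta$, because $\mathcal W_\theta$ is thick and contains the projectives. Applying $C_p$ to $K\rightarrowtail P\twoheadrightarrow Z$ gives an exact sequence of left-derived functors of $C_p$, relative to the $\theta$-structure. The key input is that for trivial objects the higher relative derived functors $L_iC_p$ vanish, since they are computed by $\operatorname{Ext}(S_p(T),-)$-type homology, the $Q$-shaped homology $\mathbb H$, which is zero on $\mathcal W_\theta$. Hence $C_p(K)\to C_p(P)\to C_p(Z)$ is short exact. Since the second map is an isomorphism, $C_p(K)=0$ for all $p$, and by nilpotency of $\mathfrak r$ this forces $K=0$. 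So $Z\cong P$ is projective.

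The main obstacle is this derived-functor step: identifying $L_1C_p$ on $\mathcal W_\theta$ with a vanishing relative $Q$-shaped homology, using \cite[Lem.~5.1]{SV_Q_shaped_homotopy} and the balance of the model structures. If that is awkward, I would instead show directly that $\operatorname{Ext}^1_\theta(Z,Y)=0$ for all $Y$. I would do this by induction on the radical filtration, reducing to $\operatorname{Ext}^1_\theta(S_p(C_pZ),-)$ restricted to the trivial part.

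Item (2) is dual. It uses $K_p$, the right adjoint to $S_p$, the relative injectives $\operatorname{Prod}\{G_q(J)\}$ with $K_pG_q(J)\cong J$ or $0$, and the injective model of the balanced pair, in which $\mathcal W_\theta$ is characterized by vanishing of the dual homology.
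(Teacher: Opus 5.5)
Your architecture is the paper's proof dualized. The paper proves (2) by building $\phi\colon Z\to\prod_q G_qK_q(Z)$ with $K_q(\phi)=\operatorname{id}$, kills $\Ker{\phi}$ by detection through $K_q$ and $\Coker{\phi}$ by $\mathbb{R}^1K_q(Z)=0$, and calls (1) dual. Your map $\pi\colon\bigoplus_pF_p(C_pZ)\to Z$, with the cokernel killed by Nakayama and the kernel by a first derived functor of $C_p$, is exactly that dual, and your ``only if'' matches the paper's.

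There is a genuine gap, however: the two places where $Z\in\mathcal{W}_\theta$ must be used are not carried out.

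\textbf{The splitting step.} The main one is the splitting of $Z(p)\twoheadrightarrow C_p(Z)$. You assert that this quotient map is $\theta$-exact, but nothing in your argument uses triviality, and without triviality the claim is false. Take complexes ($Q=Q^{\mathrm{cpx}}$) with the objectwise pure structure over $\mathbb{Z}$, and the complex $0\to2\mathbb{Z}\to\mathbb{Z}\to\mathbb{Z}/2\to0$ with $\mathbb{Z}$ in degree $p$. It is exact but not pure-acyclic. Every $C_q$ is finitely presented, hence pure-projective, yet $\mathbb{Z}\to\mathbb{Z}/2=C_p$ is not a pure epimorphism.

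In the paper the corresponding step is a real argument:
\begin{itemize}
\item apply $\operatorname{Hom}(S_q(T),-)$, $T\in\theta$, to the objectwise split conflation $Z\rightarrowtail\mathbb{G}(Z)\twoheadrightarrow\mathbb{C}(Z)$;
\item $\operatorname{Ext}^1_\theta(S_q(T),Z)=0$ and the adjunction $(S_q,K_q)$ make $K_q$ of it $\theta$-exact;
\item $K_q\mathbb{G}(Z)\cong Z(q)$ then makes $K_q(Z)\to Z(q)$ a $\theta$-inflation, which splits.
\end{itemize}
For (1) you need the mirror image. Apply $\operatorname{Hom}(-,S_p(J))$, with $J$ relatively injective, to $\mathbb{K}(Z)\rightarrowtail\mathbb{F}(Z)\twoheadrightarrow Z$, using $(C_p,S_p)$ and $C_p\mathbb{F}(Z)\cong Z(p)$. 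This requires $\operatorname{Ext}^1_\theta(Z,S_p(J))=0$, i.e.\ triviality as seen from the injective half of the balanced pair.

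\textbf{The kernel step.} This step is also not in working order.
\begin{itemize}
\item The relative long exact sequence for $L_\ast C_p$ presupposes that $\pi$ is a $\theta$-deflation. You only gesture at this (``filtering along radical layers'').
\item The vanishing you cite is aimed at the wrong functor. The groups $\operatorname{Ext}^\ast_\theta(S_p(T),-)$ control exactness of $\operatorname{Hom}_A(T,K_p(-))$, the cohomological side. That is precisely what the paper uses for (2), with no injective model needed. They say nothing directly about left derived functors of $C_p$.
\end{itemize}
Your variances are crossed: you assign the projective-model description of $\mathcal{W}_\theta$ to (1) and the injective one to (2).

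The paper's device, dualized, removes both problems. Work in the ambient abelian category, where $\pi$ is already surjective, so $\mathbb{L}_1C_p(Z)\to C_p(K)\to C_p(P)\xrightarrow{\cong}C_p(Z)\to0$ is exact. Then only $\mathbb{L}_1C_p(Z)=0$ for $Z$ itself is needed. You need neither $K\in\mathcal{W}_\theta$ nor any relative exactness of $\pi$.

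The paper gets $\mathbb{R}^1K_q(Z)\cong\operatorname{Ext}^1_{Q,A}(S_q(A),Z)=0$ because extensions by $S_q(A)$ split objectwise. The dual statement is $\operatorname{Hom}_A(\mathbb{L}_1C_p(Z),E)\cong\operatorname{Ext}^1_{Q,A}(Z,S_p(E))$ for an injective cogenerator $E$. Extensions of $Z$ by $S_p(E)$ split objectwise, so this reduces to the same vanishing $\operatorname{Ext}^1_\theta(Z,S_p(J))=0$ as in the first step. Your fallback, an induction towards $\operatorname{Ext}^1_\theta(S_p(C_pZ),-)$, is too vague to assess and again points at the cohomological side.
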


\begin{proof}
    In \cite[Prop.~4.6]{SV_Q_shaped_homotopy} is is shown that relative projectives are precisely summands of coproducts of objects of the form $F_q(P)$, with $P\in \lPrj{A}^\theta$, and dually, relative injective objects are summands of products of objects of the form $G_q(I)$, with $I\in\lInj{A}^\theta$. This proves the ``\emph{only if}'' assertions. \,

    For the converse ``\emph{if}'' assertion we will work with the injectives and one can prove the dual arguments similarly. For this purpose, assume that $Z\in\mathcal{W}_\theta$ and moreover, $K_p(Z)$ are relative injective $A$-modules for all $p\in Q$. Consider the canonical conflation
    \[
    \mathfrak{g}_Z\colon 0\longrightarrow Z\stackrel{\eta^Z}{\longrightarrow}\mathbb{G}(Z)\longrightarrow\mathbb{C}(Z)\longrightarrow0
    \]
    from \cite[Prop.~5.7]{HJ-TAMS}, where it is also shown that $\mathfrak{g}_Z$ is always objectwise split (and hence also a $\theta$-conflation). Apply the functor $\Hom{\lMode{Q,A}{\theta}}{S_q(P)}{-}$ to the conflation $\mathfrak{g}_Z$, for an arbitrary $P\in\lPrj{A}^\theta$. Since $Z$ is a trivial object, $\Ext{\lMode{Q,A}{\theta}}{1}{S_q(P)}{Z}=0$, for all $q\in Q$ and $P\in\lPrj{A}^\theta$, one gets that $\mathfrak{g}_Z$ remains a conflation after $\Hom{\lMode{Q,A}{\theta}}{S_q(P)}{-}$. Using the adjunction $(S_q,\,K_q)$, {\color{black} that is, $\Hom{\lMode{Q,A}{\theta}}{S_q(P)}{-}\,\cong\,\Hom{\lMode{A}{\theta}}{P}{K_q(-)}$,} one also has that applying $K_q(-)$ to $\mathfrak{g}_Z$  yields a $\theta$-conflation, that is,
    \[
    0\longrightarrow K_q(Z)\longrightarrow K_q(\mathbb{G}(Z))\longrightarrow K_q(\mathbb{C}(Z))\longrightarrow0
    \]
    is exact, {\color{black}since $\Hom{\lMode{A}{\theta}}{P}{K_q(-)}$ is an exact functor by the adjunction, and $P$ is a relative projective, thus $K_q(-)$ is exact as well}. Moreover, by \cite[Prop.~7.28(b)]{HJ-JLMS}, one has $K_q(\mathbb{G}(Z)(q))\cong Z(q)$, for any $q\in Q$. In particular, since $K_q(Z)$ is relative injective $A$-module the inflation $K_q(Z)\to Z(q)$ splits. Choose a retraction $\pi_q\colon Z(q)\to K_q(Z)$ and define the composition
    \[
    \phi_q\colon Z\stackrel{\eta^Z_q}{\longrightarrow} G_q(E_q(Z))=G_q(Z(q))\stackrel{G_q(\pi_q)}{\longrightarrow} G_q(K_q(Z))
    \]
    where $\eta^Z_q$ is the unit of the adjunction $(E_q,\,G_q)$. Using \cite[Prop.~7.28(b)]{HJ-JLMS}, one calculates that $K_q(\phi_q)\cong \operatorname{id}_{K_q(Z)}$. {\color{black}More concretely, one has
    \[
    K_q(\phi_q)=K_q(G_q(\pi_q)\circ\eta_q^Z)=K_qG_q(\pi_q)\circ K_q(\eta^Z_q)=\pi_q\circ K_q(\eta^Z_q)=\operatorname{id}_{K_q(Z)}.
    \]
    }By the universal property of the product, the family $\{\phi_q\}_{q\in Q}$ induces a unique map $\phi\colon Z\to\prod_{q\in Q}G_q(K_q(Z))$. Since $K_q$ commutes with products (as a right adjoint), one also has that $K_q(\phi)\cong \operatorname{id}_{K_q(Z)}$. We claim that $\phi$ is an isomorphism. Since $K_q$ is left exact one has that $K_q(\Ker{\phi})=\Ker{K_q(\phi)}=0$, for all $q\in Q$. Consequently, by \cite[Prop.~7.19]{HJ-JLMS}, $\Ker{\phi}=0$ and thus $\phi$ is a monomorphism in the ambient abelian category. On the other hand, applying $K_q$ to the short exact sequence (again in the ambient abelian category) induced by the monomorphism $\phi$, we get a long exact sequence
    \[
    0\longrightarrow K_q(Z)\longrightarrow K_q(\prod_{q\in Q}G_qK_q(Z))\stackrel{0}{\longrightarrow} K_q(\Coker{\phi})\longrightarrow\mathbb{R}^1K_q(Z)\longrightarrow\cdots
    \]
    where the third arrow represents the zero morphism since $K_q(\phi)$ is an isomorphism. Recall, however, by \cite[Lem.~4.3]{HJ-JLMS} that the first derived functor of $K_q$, satisfies
    \[
    \mathbb{R}^1K_q(Z)\cong\Ext{Q,A}{1}{S_q(A)}{Z},
    \]
    where the Ext group is computed in the abelian category. We claim that the latter is zero. Indeed, we note that one has a natural isomorphism $$\Ext{Q,A}{1}{S_q(A)}{Z}\cong\Ext{\lMode{Q,A}{\theta}}{1}{S_q(A)}{Z}.$$ Since $A$ is a projective $A$-module, every short exact sequence $0\to Z\to E\to S_q(A)\to 0$ is an objectwise split conflation, and consequently, also an objectwise $\theta$-conflation. We conclude that the latter Ext-group vanishes, since $Z$ lies in $\mathcal{W}_\theta$ and $A\in\theta$, therefore, showing that $K_q(\Coker{\phi})=0$, for all $q\in Q$ and thus, $\phi$ is an epimorphism as wanted, meaning that $Z$ is isomorphic to an injective object in $\lMode{Q,A}{\theta}$.
\end{proof}

Lemma \ref{lem:proj/inj_char} recovers the already known description of the projective and injective objects in the case when $\theta=\lPrj{A}$ (see \cite[Thm.~7.29]{HJ-JLMS}) and (trivially) when $\theta=\lMod{A}$.  The following Theorem, gives a concrete description of the cofibrant and fibrant objects in homotopically balanced pairs of model structures. However, notice that one can get the description of the cofibrant (resp. fibrant) objects by assuming only a projective (resp. injective) model structure induced by $\theta$. We ask for $\theta$ to induce a homotopically balanced pair of model structures to present a unified version of the Theorem.

\begin{thm}\label{thm:bifibrant_description}
    Let $\theta$ be a saturated class of $\lMod{A}$ and assume that it induces a homotopically balanced pair of model structures in $\lMod{Q,A}$, denoted by $(\mathcal{C}_\theta,\mathcal{W}_\theta,\lMode{Q,A}{\theta})$ and $(\lMode{Q,A}{\theta},\mathcal{W}_\theta,\mathcal{F}_\theta)$, for the projective and injective model, respectively. There are inclusions
    \begin{align*}
        \mathcal{C}_\theta & \subseteq \{X\in\lMode{Q,A}{\theta}\,|\,X(q)\in\lPrj{A}^\theta,\text{ for all } q\in Q\}\,\text{ and }\\
        \mathcal{F}_\theta & \subseteq \{Y\in\lMode{Q,A}{\theta}\,|\,Y(q)\in\lInj{A}^\theta,\text{ for all } q\in Q\},
    \end{align*}
    that become equalities if $\lMod{A}^\theta$ has finite relative global dimension with respect to $\theta$.
\end{thm}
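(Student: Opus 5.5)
The plan is to handle the cofibrant side in detail; the fibrant side is the formal dual, obtained by exchanging $F_q$ with $G_q$, $C_q$ with $K_q$, and $\lPrj{A}^\theta$ with $\lInj{A}^\theta$.

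For the inclusion $\mathcal{C}_\theta\subseteq\{X\mid X(q)\in\lPrj{A}^\theta\}$ I would use that $(\mathcal{C}_\theta,\mathcal{W}_\theta)$ is complete with $\mathcal{C}_\theta\cap\mathcal{W}_\theta=\lPrj{Q,A}^\theta$. The key claim is that every $X\in\mathcal{C}_\theta$ is a direct summand of an object filtered by stalks $S_q(T)$ with $T\in\theta$, together with relative projectives $F_q(T)$; this is the usual deconstructibility of the left class of a cotorsion pair generated by a set, via the small object argument in the efficient category. Evaluation $E_p$ is exact and preserves coproducts and transfinite compositions. Moreover $E_p S_q(T)$ equals $T$ or $0$, and $E_pF_q(T)=Q(q,p)\otimes_\Bbbk T$ lies in $\operatorname{Add}(T)\subseteq\theta$ because $Q(q,p)$ is finitely generated projective over $\Bbbk$. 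Hence $X(p)$ is a summand of an object filtered by relative projectives. Such filtrations by relative projectives are split, so $X(p)\in\lPrj{A}^\theta$. If the deconstruction route is awkward, I would instead use the alternative argument: an Ext-orthogonality argument via the adjunction $(F_p,E_p)$. Since $F_p$ is exact, $\operatorname{Ext}^1_\theta(X(p),M)\cong\operatorname{Ext}^1(X,G_p M)$, and one checks that $G_pM\in\mathcal{W}_\theta$. This check uses $\operatorname{Ext}^1(S_q(T),G_pM)\cong\operatorname{Ext}^1_\theta(E_pS_q(T),M)=0$, since $E_pS_q(T)$ is $T$ or $0$.

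For the reverse inclusion under finite relative global dimension $d$, take $X$ with all $X(q)\in\lPrj{A}^\theta$. Choose a conflation $W\rightarrowtail C\twoheadrightarrow X$ from completeness, with $C\in\mathcal{C}_\theta$ and $W\in\mathcal{W}_\theta$. By the first part $C$ is objectwise relatively projective, so $W$ is objectwise relatively projective as well, because the objectwise conflation splits. It then suffices to show that a trivial object $W$ which is objectwise in $\lPrj{A}^\theta$ is relatively projective. Given that, the conflation splits since $\operatorname{Ext}^1(X,W)$ vanishes for relatively projective—hence injective in the Frobenius sense? no—rather, I would show $W\in\lPrj{Q,A}^\theta$ lies in $\mathcal{C}_\theta$, so $X\in\mathcal{C}_\theta$ by thickness/heredity of the left class (cokernel of an inflation between cofibrants in a hereditary pair with $X$ in... ). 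More cleanly: $X$ is then an extension of $C$ by a projective, and resolving shows $X\in\mathcal{C}_\theta$ via $\operatorname{Ext}^1(X,\mathcal{W}_\theta)$ computed from $W$ projective.

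To show that such a $W$ is projective, I would use Lemma~\ref{lem:proj/inj_char}(1): it suffices to show that $C_p(W)\in\lPrj{A}^\theta$ for every $p$. Here finite dimension enters. First, $\mathbb{L}_iC_p(W)\cong\operatorname{Ext}$-type groups against $S_p$ vanish for $W$ trivial. This follows from the dual of the $\mathbb{R}^1K_q$ computation used in Lemma~\ref{lem:proj/inj_char}, applied to the relative structure and iterated through dimension shifting using thickness of $\mathcal{W}_\theta$. Second, nilpotence of $\mathfrak r$ gives a finite radical filtration of $W(p)$ whose layers are built from $C_p$-values at neighbouring objects. Third, $d<\infty$ guarantees that a relative projective resolution of $C_p(W)$ terminates. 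Together with the vanishing higher derived functors, a syzygy argument (an object of projective dimension at most $d$ with $\operatorname{Ext}^{>0}$ into everything killed) gives $C_p(W)\in\lPrj{A}^\theta$. I expect this last step, transporting the finite global dimension through $C_p$ and its derived functors in the relative setting, to be the main obstacle. My first attempt would mimic the proof of \cite[Thm.~E]{HJ-TAMS}, replacing projective with $\theta$-projective throughout.
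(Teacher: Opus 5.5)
Your first inclusion is correct along your second route, the Ext-orthogonality argument. It is exactly the dual of the paper's argument, which works on the fibrant side and shows $G_q(M),F_q(N)\in\mathcal{W}_\theta$. Three corrections:
\begin{itemize}
\item The isomorphism $\operatorname{Ext}^1_\theta(X(p),M)\cong\operatorname{Ext}^1(X,G_pM)$ comes from the $(E_p,G_p)$-adjunction, together with $E_p$ being exact and preserving relative projectives. It does not come from exactness of $F_p$.
\item The fibrant side is not a purely formal dual. Since $\mathcal{W}_\theta$ is defined as $\mathscr{G}_\theta^\perp$, getting $F_p(N)\in\mathcal{W}_\theta$ requires the Serre isomorphism $F_p\cong G_{\mathbb{S}p}$, as the paper uses.
\item Discard the deconstruction route. $\mathscr{G}_\theta$ is in general a proper class, and nothing in the hypotheses makes $(\mathcal{C}_\theta,\mathcal{W}_\theta)$ generated by a set.
\end{itemize}

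The reverse inclusion has a genuine gap. Your reduction matches the paper: it suffices that a trivial $W$ with all $W(q)\in\lPrj{A}^\theta$ is relatively projective. That key step, however, is not proved. Your closing ``syzygy argument'' assumes $\operatorname{Ext}^{>0}_\theta(C_p(W),-)=0$, which is the conclusion itself, and would make $d<\infty$ irrelevant. Vanishing of $\mathbb{L}_iC_p(W)$ only says that $C_p$ is exact on a resolution of $W$. A radical filtration of $W(p)$ never lets $d$ enter.

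The missing ingredient is relative Gorenstein projectivity. Objects with values in relative projectives are relatively Gorenstein projective; this is the dual of the paper's use of \cite[Thm.~4.13]{SV_Q_shaped_homotopy}. So $W$ sits in an exact sequence $0\to W\to P^0\to\cdots\to P^{d-1}\to W'\to0$ cut out of a totally acyclic complex of relative projectives. Then:
\begin{itemize}
\item All cycles of this sequence are trivial by thickness, since relative projectives are trivial.
\item Hence $C_p$ keeps the sequence exact, and the terms $C_p(P^i)$ are relatively projective.
\item So $C_p(W)$ is a $d$-th syzygy of $C_p(W')$ in $\lMode{A}{\theta}$, hence relatively projective once $\operatorname{gl.dim}_\theta\le d$.
\item Lemma~\ref{lem:proj/inj_char}(1) then makes $W$ projective.
\end{itemize}
This is the dual of what the paper does on the fibrant side with $K_q$ and $0\to W^{-n}\to J^{-n-1}\to\cdots\to J^0\to W\to0$.

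The last step of your reduction is also wrong as written. $\mathcal{C}_\theta$ being resolving means closure under kernels of deflations, not cokernels of inflations. From $W\rightarrowtail C\twoheadrightarrow X$ with $W$ projective and $C$ cofibrant, you only learn that $\operatorname{Ext}^1(X,W')$ is a quotient of $\operatorname{Hom}(W,W')$ for $W'\in\mathcal{W}_\theta$. For example, in the projective model of $\mathbf{D}(\mathbb{Z})$, multiplication by $2$ on the projective complex $\mathbb{Z}\xrightarrow{=}\mathbb{Z}$ has cokernel $\mathbb{Z}/2\xrightarrow{=}\mathbb{Z}/2$. This cokernel is trivial but not projective, hence not cofibrant.

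What you need is that the conflation splits, i.e.\ $\operatorname{Ext}^1(X,W)=0$. The paper gets this because $X$, having values in $\lPrj{A}^\theta$, is relatively Gorenstein projective. You can also get it from your own formula. Every relative projective is a summand of some $\bigoplus_qF_q(T_q)$, which is $\cong\prod_qG_{\mathbb{S}q}(T_q)$ by local boundedness. And $\operatorname{Ext}^1(X,G_{\mathbb{S}q}(T))\cong\operatorname{Ext}^1_\theta(X(\mathbb{S}q),T)=0$.
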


\begin{proof}
    We prove the result for the fibrant objects. The cofibrant case follows by the dual arguments. First we establish the inclusion
    \[
    \mathcal{F}_\theta \subseteq \{Y\in\lMode{Q,A}{\theta}\,|\,Y(q)\in\lInj{A}^\theta,\text{ for all } q\in Q\}=:\mathscr{I}.
    \]
    To show that a functor $X\in\mathcal{F}_\theta$ takes values in relative injective $A$-modules, {\color{black}it suffices to show that the functor $\Hom{A}{-}{X(q)}$ is exact for all $q\in Q$. By the adjunction $(F_q,E_q)$, it is enough to show that $\Hom{\lMode{Q,A}{\theta}}{F_q(-)}{X}$ is exact. By \cite[Lem.~4.3]{SV_Q_shaped_homotopy}, the functors $F_q,\,E_q$ and $G_q$ are exact, thus one needs to show that for a given conflation $\xi\colon0\to L\to M\to N\to 0$, the sequence $\Hom{\lMode{Q,A}{\theta}}{F_q(\xi)}{X}$ is exact as well. Equivalently, it suffices to show that \begin{equation}\label{eq:ext-vanishing-F}
    \Ext{\lMode{Q,A}{\theta}}{1}{F_q(N)}{X}=0,
    \end{equation} or even stronger (as we argue below) that $F_q(M)$ lies in $\mathcal{W}_\theta$ for all $q\in Q$ and $M\in\lMod{A}$.\,
    
    We prove at the same time that $F_q(M)$ and $G_q(M)$ lie in $\mathcal{W}_\theta$ for all $q\in Q$ and $M\in \lMod{A}$. For this purpose, consider a relative projective resolution $P^\bullet\longrightarrow S_p(T)$ in $\lMode{Q,A}{\theta}$, for some $p\in Q$ and $T\in\theta$. Recall that the functors $E_q$ and $G_q$ are exact and in particular, by \cite[Prop.~4.6]{SV_Q_shaped_homotopy}, $E_q$ maps relative projective objects to relative projective $A$-modules. Thus, applying $E_q$ to the projective resolution of $S_p(T)$ gives a projective resolution of $E_q(S_p(T))$ in $\lMode{A}{\theta}$. The $(E_q,G_q)$ adjunction gives an isomorphism of complexes
    \[
    \Hom{\lMode{Q,A}{\theta}}{P^\bullet}{G_q(M)}\,\cong\,\Hom{\lMode{A}{\theta}}{E_q(P^\bullet)}{M}.
    \]
    Taking cohomology induces an isomorphism
    \[
    \Ext{\lMode{Q,A}{\theta}}{1}{S_p(T)}{G_q(M)}\,\cong\,\Ext{\lMode{A}{\theta}}{1}{E_q(S_p(T))}{M},
    \]
    where $E_q(S_p(T))$ takes the value $T$ in $p=q$ and $0$ elsewhere. Finally, since $T$ is a relative projective $A$-module, the latter Ext group vanishes, and thus, $G_q(M)$ lies in $\mathcal{W}_\theta$ as desired. By \cite[Lem.~3.4]{HJ-TAMS} for every $q\in Q$, there exist a natural isomorphism $F_q\cong G_{\mathbb{S}q}$, and thus $F_q(M)$ lies in $\mathcal{W}_\theta$ as well. Thus, we show that the Ext group in \eqref{ext-vanishing-F} indeed vanishes, and thus $\Hom{A}{-}{X(q)}$ is an exact functor, proving that $X$ takes values in relative injective $A$-modules.\,}

    Assume now that $A$ has finite relative global dimension with respect to $\theta$, denoted by $\operatorname{gl.dim}_\theta A\leq n$. Let $I$ lie in $\mathscr{I}$. By the completeness of the cotorsion pair $(\mathcal{W}_\theta,\mathcal{F}_\theta)$, there is a conflation   \begin{equation}\label{eq:inj_cot_pair_approx}
    0\longrightarrow I\longrightarrow F\longrightarrow W\longrightarrow0,
    \end{equation} 
    with $W\in \mathcal{W}_\theta$ and $F\in\mathcal{F}_\theta$. Evaluating at an object $q\in Q$, one has a $\theta$-conflation
    \[
    0\to I(q)\to F(q)\to W(q)\to0
    \] 
    in $\lMod{A}$, that splits since $I(q)$ is relative injective in $\lMod{A}$. In fact, $F(q)$ is also relative injective, since $F$ is a fibrant object, and we proved above that fibrant objects take values in relative injective $A$-modules. We conclude that $W(q)$ is also relative injective as a direct summand of a relative injective. Hence, $W$ lies in $\mathscr{I}$. In particular, by \cite[Thm.~4.13]{SV_Q_shaped_homotopy}, $I, W$ and $F$ are all relative Gorenstein injective objects, as they take values on relative (Gorenstein) injective $A$-modules, for every $q\in Q$. We claim that $W$ is in fact relative injective.\,

    Since $W$ is Gorenstein injective, there is by definition a totally acyclic injective co-resolution of $W$
    \begin{equation}\label{eq:inj_cores}
    0\longrightarrow W^{-n}\longrightarrow J^{-n-1}\longrightarrow\cdots\longrightarrow J^{-1}\longrightarrow J^0\longrightarrow W\longrightarrow0,
    \end{equation}
where $J^{-i}$ are relative injective objects for every $i$. As we argued above, injective objects in $\lMode{Q,A}{\theta}$ are trivial and, by Proposition \ref{prp:injective_pure}, $\mathcal{W}_\theta$ is thick, thus $W^{-n}$ lies in $\mathcal{W}_\theta$ as well. For a fixed $q\in Q$, apply to \eqref{inj_cores} the functor $K_q(-)$. Note that $K_q$ is a left exact functor, but a priori not exact. In fact, in \cite[Sec.~7]{HJ-JLMS}, the $i$-th right derived functor of $K_q$ is the $i$-th cohomology computed in the abelian structure. Nevertheless, we showed that $W^{-n}$ is in $\mathcal{W}_\theta$, meaning that it has vanishing relative cohomology, computed in the exact category $\lMode{Q,A}{\theta}$. An application of \cite[Thm.~5.2]{SV_Q_shaped_homotopy} allows one to compute this relative cohomology in the abelian category $\lMod{Q}$, independently of the exact structure. Moreover, since $\theta$ contains $\lPrj{A}$ by assumption, vanishing of the relative cohomology implies vanishing of the abelian cohomology. In particular, $\mathbb{R}^1K_q(W^{-n})=0$.  Therefore, the sequence
\[
0\longrightarrow K_q(W^{-n})\longrightarrow K_q(J^{-n-1})\longrightarrow\cdots \longrightarrow K_q(J^{-1})\longrightarrow K_q(J^0)\longrightarrow K_q(W)\longrightarrow0
\]
is exact in $\lMode{A}{\theta}$. By Lemma \ref{lem:proj/inj_char}(2), $K_q(J^{-i})$ are injective for all $i$, and $K_q(W^{-n})$ has relative injective dimension at most $n$, by our assumption on the finiteness of the relative global dimension of $A$. It follows that $K_q(W)$ is relative injective as well, for every $q\in Q$. Thus, $W$ lies in $\mathcal{W}_\theta$ and $K_q(W)$ is injective in $\lMode{A}{\theta}$ for every $q\in Q$, hence by another application of Lemma \ref{lem:proj/inj_char}(2), we conclude that $W$ is in fact a relative injective object. Finally, in the conflation \eqref{inj_cot_pair_approx}, we have that $W$ is injective and $I$ is Gorenstein injective, and thus $\Ext{\lMode{Q,A}{\theta}}{1}{W}{I}=0$, implying that \eqref{inj_cot_pair_approx} splits. It follows that $I$ is a direct summand of $F$ and consequently, $I\in\mathcal{F}_\theta$. This concludes the proof.
\end{proof}

In the special case that $\theta$ is the class of projective $A$-modules, and one works with the abelian structure, Theorem \ref{thm:bifibrant_description} recovers a result of Holm and J\o rgensen \cite[Thm.~E]{HJ-TAMS}. When working with the objectwise pure exact structures, we recover as a Corollary an explicit description of the cofibrant and fibrant objects.

\begin{cor}\label{cor:pure_bifib_objects}
    Let $\lMod{A}$ be endowed with the pure exact structure and denote by $\varrho=\lsmod{A}$. There are inclusions
    \begin{align*}
        \mathcal{C}_\varrho & \subseteq \{X\in\lMod{Q,A}\,|\,X(q)\in\lPPrj{A},\text{ for all } q\in Q\}\,\text{ and }\\
        \mathcal{F}_\varrho & \subseteq \{Y\in\lMod{Q,A}\,|\,Y(q)\in{}_{A}\operatorname{PInj},\text{ for all } q\in Q\},
    \end{align*}
    that become equalities if $A$ has finite pure global dimension.
\end{cor}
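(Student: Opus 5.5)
This is a specialisation of Theorem~\ref{thm:bifibrant_description}, so the plan is to check its hypotheses for $\theta=\lPPrj{A}$, the saturation of $\varrho=\lsmod{A}$, and then translate its conclusion into the language of purity.

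\textbf{Step 1: identifying the relative classes.} By Example~\ref{exa:Prj-PPrj-Mod}(b), $\theta$ is saturated and $\lMode{A}{\theta}$ is the pure exact structure on $\lMod{A}$. Hence the relative projectives are $\lPrj{A}^\theta=\lPPrj{A}$. The relative injectives $\lInj{A}^\theta$ are the modules that are injective with respect to pure exact sequences, that is, ${}_A\operatorname{PInj}$. Moreover $\lPrj{A}\subseteq\lPPrj{A}$, as the set-up preceding Theorem~\ref{thm:bifibrant_description} requires.

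\textbf{Step 2: the homotopically balanced pair.} The projective model $(\mathcal{C}_\varrho,\mathcal{W}_\varrho,\lMode{Q,A}{\varrho})$ comes from \cite[Thm.~5.2]{SV_Q_shaped_homotopy}, since Setup~\ref{stp:HJ+theta} holds for the saturation of a set of finitely presented modules. The injective model $(\lMode{Q,A}{\varrho},\mathcal{W}_\varrho,\mathcal{F}_\varrho)$ comes from Proposition~\ref{prp:injective_pure}. Part (d) of that proposition gives that $(\mathcal{W}_\varrho,\mathcal{F}_\varrho)$ is complete and hereditary, part (a) gives that $\mathcal{W}_\varrho$ is thick, and part (c) gives $\mathcal{W}_\varrho\cap\mathcal{F}_\varrho=\lInj{Q,A}^\varrho$. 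By Hovey's correspondence in the exact setting these data define an injective exact model structure. Its trivial class is the same $\mathcal{W}_\varrho$ as in the projective model, so the two models form a homotopically balanced pair, as remarked after Proposition~\ref{prp:injective_pure}.

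\textbf{Step 3: the inclusions.} Theorem~\ref{thm:bifibrant_description} now gives $\mathcal{C}_\varrho\subseteq\{X\mid X(q)\in\lPPrj{A}\}$ and $\mathcal{F}_\varrho\subseteq\{Y\mid Y(q)\in{}_A\operatorname{PInj}\}$. The objects of $\lMode{Q,A}{\varrho}$ are exactly those of $\lMod{Q,A}$, so the sets on the right agree with those in the statement.

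\textbf{Step 4: the equalities.} The relative global dimension of $\lMode{A}{\theta}$ in Definition~\ref{dfn:relative_gldim} is, for the pure exact structure, the pure global dimension of $A$, as noted in the examples after that definition. So if $A$ has finite pure global dimension, the equality clause of Theorem~\ref{thm:bifibrant_description} applies. That clause bounds relative injective dimension by the global dimension, which uses that projective and injective relative global dimensions agree; for the pure exact structure both equal the pure global dimension.

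The only step needing care is Step 2, namely that the injective model is a genuine exact model structure sharing the weak equivalences with the projective one. I expect this to follow directly from Proposition~\ref{prp:injective_pure} together with the Hovey correspondence.
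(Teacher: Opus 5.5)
Your proposal is correct and follows the paper's route. The paper gives no separate argument: the corollary is stated as an immediate specialisation of Theorem~\ref{thm:bifibrant_description} to $\theta=\lPPrj{A}$. The homotopically balanced pair consists of the projective model from \cite[Thm.~5.2]{SV_Q_shaped_homotopy} and the injective model from Proposition~\ref{prp:injective_pure}, and the relative global dimension is identified with the pure global dimension, exactly as in your Steps 1--4.
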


In the special case that $Q=Q^{\text{cpx}}$ and $\lMod{Q,A}=\mathbf{Ch}(A)$ is the category of (co)chain complexes of $A$-modules, the $Q$-shaped pure derived category coincides with the (classical) pure derived category $\mathbf{D}^{\text{pure}}(A)$ of $A$, obtained by the so-called \emph{induced pure exact structure} as explained in \cite{stovicek2014purity}. This homotopy category, is proven in \emph{loc. cit.} to enjoy certain nice homological properties, e.g. it is a compactly generated triangulated category and it fits into a recollement diagram
\[
\begin{tikzcd}[column sep=6em]
	{\mathbf{K}^{\text{pac}}(A)} & {\mathbf{K}(A)} & {\mathbf{D}^{\text{pure}}(A).}
	\arrow["i_*"{description}, from=1-1, to=1-2]
	\arrow["{i^*}"', shift right=3, from=1-2, to=1-1]
	\arrow["{i^!}", shift left=3, from=1-2, to=1-1]
	\arrow["j^*"{description}, from=1-2, to=1-3]
	\arrow["{j_*}", shift left=3, from=1-3, to=1-2]
	\arrow["{j_!}"', shift right=3, from=1-3, to=1-2]
\end{tikzcd}
\]
where $\mathbf{K}^{\text{pac}}(A)$ denotes the full subcategory of $\mathbf{K}(A)$ consisting of pure-acyclic complexes of $A$-modules, that is, the full subcategory of $\mathbf{K}(A)$ consisting of trivial objects with respect to the model structure induced by the induced pure exact structure on complexes.

\begin{prp}\label{prp:pure-is-cg}
    The $Q$-shaped pure derived category $\QSPD{Q}{A}$ of a ring $A$ is compactly generated. Moreover, the inclusion $\theta_1=\lPPrj{A}\subseteq\lMod{A}=\theta_2$ induces a recollement of triangulated categories
    \[
\begin{tikzcd}[column sep=6em]
	{\mathbf{K}_Q^{\text{pac}}(A)} & {\mathbf{K}_Q(A)} & {\mathbf{D}_Q^{\text{pure}}(A).}
	\arrow["i_*"{description}, from=1-1, to=1-2]
	\arrow["{i^*}"', shift right=3, from=1-2, to=1-1]
	\arrow["{i^!}", shift left=3, from=1-2, to=1-1]
	\arrow["j^*"{description}, from=1-2, to=1-3]
	\arrow["{j_*}", shift left=3, from=1-3, to=1-2]
	\arrow["{j_!}"', shift right=3, from=1-3, to=1-2]
\end{tikzcd}
\]
where $\mathbf{K}_Q^{\text{pac}}(A)$ is the full subcategory of $\QSH{Q}{A}$ consisting of all trivial objects with respect to the model structure $(\mathcal{C}_{\theta_1},\mathcal{W}_{\theta_1},\lMode{Q,A}{\theta_1})$. Moreover, if $A$ has finite pure global dimension, the image of $j_!$ is $\mathbf{K}_Q(\lPPrj{A})$ and the image of $j_*$ is $\QSH{Q}{{}_A\operatorname{PInj}}$.
\end{prp}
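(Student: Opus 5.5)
The proof has three parts: compact generation, the recollement, and the identification of the two images.

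\emph{Compact generation.} Take $\varrho=\lsmod{A}$ and $\theta_1=\operatorname{Sat}(\varrho)=\lPPrj{A}$. This $\varrho$ is a set of finitely presented modules containing $A$. As recalled after Setup~\ref{stp:HJ+theta}, the conditions of that setup hold by \cite[Prop.~4.17(d), Lem.~6.7]{SV_Q_shaped_homotopy}. Proposition~\ref{prp:comp_gen_q_homotopy}(b) therefore applies, and shows that $\QSPD{Q}{A}$ is compactly generated by the stalks $S_q(M)$ with $q\in Q$ and $M\in\lsmod{A}$.

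\emph{The recollement.} Every pure conflation in $\lMod{Q,A}$ is in particular a conflation for the objectwise exact structure $\lMode{Q,A}{\theta_2}$ with $\theta_2=\lMod{A}$; this structure is the objectwise split one. The Frobenius category computing $\QSH{Q}{A}$ is all of $\lMode{Q,A}{\theta_2}$ (cf.\ Example~\ref{exa:Prj-PPrj-Mod}(c)). I would realise both categories as stable categories of one Frobenius category, so that the pure model becomes a Bousfield localisation.
\begin{itemize}
\item[(i)] The class $\mathcal{W}_{\theta_1}$ is thick in $\lMod{Q,A}$ by Proposition~\ref{prp:injective_pure}(a), and it contains the relative injectives of the split structure.
\item[(ii)] The canonical functor $j^*\colon\QSH{Q}{A}\to\QSPD{Q}{A}$ is a Verdier quotient with kernel the image $\mathbf{K}_Q^{\mathrm{pac}}(A)$ of $\mathcal{W}_{\theta_1}$.
\item[(iii)] The left adjoint $j_!$ is the cofibrant replacement in the projective model $(\mathcal{C}_{\theta_1},\mathcal{W}_{\theta_1},\lMode{Q,A}{\theta_1})$. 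Concretely, $j_!$ identifies $\underline{\mathcal{C}}_{\theta_1}$ with the left orthogonal of $\mathbf{K}_Q^{\mathrm{pac}}(A)$ in $\QSH{Q}{A}$. The key computation is that for $C\in\mathcal{C}_{\theta_1}$ and $W\in\mathcal{W}_{\theta_1}$ we have $\Hom{\QSH{Q}{A}}{C}{W}\cong\Ext{\exact{\varrho}}{1}{C}{\Sigma^{-1}W}=0$. Here $\Sigma^{-1}W$ should be taken in the split Frobenius structure, which again lies in $\mathcal{W}_{\theta_1}$ by thickness.
\item[(iv)] The right adjoint $j_*$ comes from the injective model $(\lMode{Q,A}{\varrho},\mathcal{W}_\varrho,\mathcal{F}_\varrho)$ of Proposition~\ref{prp:injective_pure}(d), by the dual computation.
\end{itemize}
Having both adjoints $j_!$ and $j_*$ of a Verdier quotient with thick (hence localising) kernel gives the recollement by the standard criterion. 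In the language of Gillespie's theory of recollements from cotorsion pairs, the input is exactly the three complete hereditary cotorsion pairs $(\mathcal{C}_{\theta_1},\mathcal{W}_{\theta_1})$, $(\mathcal{W}_{\theta_1},\mathcal{F}_{\theta_1})$, and the trivial pair for the split structure.

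\emph{Identifying the images.} Since $j_!$ and $j_*$ are fully faithful, their images are the cofibrant (respectively fibrant) objects, viewed up to homotopy in $\QSH{Q}{A}$. Corollary~\ref{cor:pure_bifib_objects} identifies these, under finite pure global dimension, as exactly the $Q$-shaped modules with pure-projective (respectively pure-injective) values. Their homotopy categories are by definition $\mathbf{K}_Q(\lPPrj{A})$ and $\QSH{Q}{{}_A\operatorname{PInj}}$.

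The main obstacle is step (iii). One must check carefully that $\Sigma$ in the split Frobenius structure preserves $\mathcal{W}_{\theta_1}$. One must also check that morphisms in $\QSH{Q}{A}$ between a pure-cofibrant and a pure-trivial object vanish: morphisms factoring through split-projectives need not factor through objectwise pure-projectives, so these are not the same statement. I would handle this with the Hom-adjunction computation in (iii), using that split-projectives are pure-trivial.
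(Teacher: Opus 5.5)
Your compact generation step is the paper's. For the recollement your route differs. The paper does not build $j_!$ and $j_*$ by hand. It applies \cite[Prop.~6.2~\&~6.3]{SV_Q_shaped_homotopy} to the split model paired with the pure projective model, and to the split model paired with the pure injective model. The only thing it actually verifies is
\[
\Prjp{\lMode{Q,A}{\theta_2}}=\Inj(\lMode{Q,A}{\theta_2})\subseteq\mathcal{W}_{\theta_1}.
\]
Your steps (i)--(iv) essentially reprove those propositions, and this is viable. For $C\in\mathcal{C}_{\theta_1}$ and any $Y$, $\Hom{\QSH{Q}{A}}{C}{Y}$ is $\Hom{Q,A}{C}{Y}$ modulo maps through split-projectives. $\Hom{\QSPD{Q}{A}}{C}{Y}$ is the same group modulo maps through pure-projectives. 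Once these two ideals agree on maps out of cofibrant objects, $j_!\dashv j^*$ with $j_!$ fully faithful follows directly. Dually, $j_*$ comes from the Frobenius category $\mathcal{F}_\varrho$. This comparison also avoids a subtlety: the completeness conflations $0\to W'\to C\to X\to0$ are only pure, so they do not directly give triangles in $\QSH{Q}{A}$. Your route is self-contained, while the paper's reduces everything to one inclusion.

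That inclusion, however, is exactly what you leave unproved. In (i) you assert that $\mathcal{W}_{\theta_1}$ contains the split-injectives, and in (iii) you use ``split-projectives are pure-trivial'' as known. Every non-formal step rests on it:
\begin{itemize}
\item Thickness alone does not give $\Sigma^{\pm1}W\in\mathcal{W}_{\theta_1}$. The middle term $P$ of $0\to\Sigma^{-1}W\to P\to W\to0$ must itself be pure-trivial.
\item Your isomorphism $\Hom{\QSH{Q}{A}}{C}{W}\cong\Ext{\exact{\varrho}}{1}{C}{\Sigma^{-1}W}$ needs $\Ext{\exact{\varrho}}{1}{C}{P}=0$.
\item Refactoring a map $C\to P\to Y$ through a pure-projective needs $\Ext{\exact{\varrho}}{1}{C''}{P}=0$. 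Here $0\to C\to P_0\to C''\to0$ is a $\mathcal{C}_{\theta_1}$-conflation with $P_0$ pure-projective.
\item It is also what makes split homotopy equivalences pure weak equivalences, i.e.\ what makes $j^*$ well defined.
\end{itemize}
The paper supplies the inclusion via \cite[Thm.~5.2]{SV_Q_shaped_homotopy}. There, $X$ is trivial in $\lMode{Q,A}{\theta_\ell}$ if and only if $\sHH{i}{\stalkco{q}}{n}(\Hom{A}{T}{X})=0$ for all $T\in\mathscr{T}_\ell$, $q\in Q$, $n\in\mathbb{Z}$ and $i>0$. These groups are computed in $\lMod{Q}$, independently of the exact structure. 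Since $\mathscr{T}_1\subseteq\mathscr{T}_2$ and the split-trivial objects are the split-projectives, the inclusion follows. Alternatively, the proof of Theorem~\ref{thm:bifibrant_description} shows $F_q(M),G_q(M)\in\mathcal{W}_\theta$ for every $A$-module $M$. Combine this with closure of $\mathcal{W}_{\theta_1}$ under coproducts and summands.

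Two smaller remarks. First, Gillespie's recollement theorem needs all three cotorsion pairs in one exact category. Here the trivial pair lives in the objectwise split structure and the other two in the objectwise pure one. This mismatch is why the paper uses \cite[Prop.~6.2~\&~6.3]{SV_Q_shaped_homotopy} instead. Second, ``thick (hence localising)'' is not a valid implication. The recollement criterion only needs a thick subcategory whose Verdier quotient functor has both adjoints. The kernel is localising anyway, since $\mathcal{W}_{\theta_1}$ is closed under coproducts by Proposition~\ref{prp:injective_pure}(a).
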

\begin{proof}
    Compact generation follows immediately by Proposition \ref{prp:comp_gen_q_homotopy}. For the recollement diagram, we want to apply the results of \cite[Prop.~6.2~\&~6.3]{SV_Q_shaped_homotopy} for the projective and injective model structures \small \[(\lMode{Q,A}{\theta_2},\lPrj{Q,A}^{\theta_2},\lMode{Q,A}{\theta_2}) - (\mathcal{C}_{\theta_1},\mathcal{W}_{\theta_1},\lMode{Q,A}{\theta_1})\quad\text{and}\quad (\lMode{Q,A}{\theta_2},\lPrj{Q,A}^{\theta_2},\lMode{Q,A}{\theta_2}) - (\lMode{Q,A}{\theta_1},\mathcal{W}_{\theta_1},\mathcal{F}_{\theta_1}),\] \normalsize respectively. For this purpose we just need to show that $\Prj({\lMode{Q,A}{\theta_2}})=\Inj({\lMode{Q,A}{\theta_2}})\subseteq \mathcal{W}_{\theta_1}$. Since $\theta_1$ and $\theta_2$ are selected in a way to satisfy Setup \ref{stp:HJ+theta}, we can make use of \cite[Thm.~5.2]{SV_Q_shaped_homotopy} with $\mathscr{T}_\ell=\theta_\ell$, $\ell=1,2$, to identify trivial objects. More concretely, an object $X$ in $\lMode{Q,A}{\theta_\ell}$ is trivial if and only if one has
 \[
 \sHH{i}{\stalkco{q}}{n}(\Hom{A}{T}{X})=0,\, \text{ for all } T \in \mathscr{T}_\ell, \, q \in Q,\, n \in \mathbb{Z},\, \text{ and }\, i>0,
 \]
 where $\sHH{i}{\stalkco{q}}{n}$ is computed in $\lMod{Q}$, independently of the exact structure on $\lMode{Q,A}{\theta_\ell}$. Since  $\mathscr{T}_1\subseteq\mathscr{T}_2$, it follows that, if $X$ is trivial in \smash{$\lMode{Q,A}{\theta_2}$}, then it is also trivial in \smash{$\lMode{Q,A}{\theta_1}$}, as wanted.\,

 Finally, assuming that $A$ has finite pure global dimension, by Corollary \ref{cor:pure_bifib_objects} and, \cite[Rmk.~6.4~\&~Cor.~6.5]{SV_Q_shaped_homotopy}, one gets that the image of $j_!$ is $\mathbf{K}_Q(\lPPrj{A})$ and the image of $j_*$ is $\QSH{Q}{{}_A\operatorname{PInj}}$.
\end{proof}

Conceptually, the following result strive to establish the pure exact structure as a sort of limit for compact generation.

\begin{cor}
    Let $\varrho={}_A\operatorname{mod}$ be the class of finitely presented $A$-modules and consider a class $\varrho'\subsetneq\varrho$  whose saturations satisfy the properties of Setup \ref{stp:HJ+theta} and moreover, its saturation $\theta'$ is strictly included in the class $\theta={}_A\operatorname{PPrj}$. Then, $\operatorname{Ho}(\lMode{Q,A}{\varrho'})$ is a compactly generated triangulated category.
\end{cor}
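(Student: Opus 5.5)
The plan is to reduce the statement to Proposition~\ref{prp:comp_gen_q_homotopy}, or more precisely to Theorem~\ref{thm:compact_generation} applied to the exact category $\lMode{Q,A}{\varrho'}=\lMode{Q,A}{\theta'}$. By hypothesis the saturation $\theta'$ satisfies Setup~\ref{stp:HJ+theta}. Hence the set $\mathscr{G}_{\varrho'}=\{S_q(M)\mid q\in Q,\ M\in\varrho'\}$ generates a complete hereditary cotorsion pair, and the resulting projective Hovey triple $(\mathcal{C}_{\theta'},\mathcal{W}_{\theta'},\lMode{Q,A}{\theta'})$ has homotopy category $\operatorname{Ho}(\lMode{Q,A}{\varrho'})$. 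So it suffices to show two things: that $\lMode{Q,A}{\theta'}$ satisfies Setup~\ref{stp:loc_coh}, and that the generating stalks are finitely presented (compact) objects.

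First I will check Setup~\ref{stp:loc_coh}. The category $\lMod{Q,A}$ is locally finitely presented, and the exact structure does not change the underlying additive category. Weak idempotent completeness holds because $\lMod{Q,A}$ is abelian, so idempotents split. Enough projectives follows from $\theta'$ being saturated and precovering, via the description $\Prjp{\lMode{Q,A}{\theta'}}=\operatorname{Add}\{F_q(T)\}$ recalled before Example~\ref{exa:Prj-PPrj-Mod}. The projective hereditary model structure is exactly item~(3) of Setup~\ref{stp:HJ+theta} combined with \cite[Thm.~3.4]{SV_Q_shaped_homotopy}.

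Second, I will check that each $S_q(M)$ with $M\in\varrho'$ is finitely presented. Since $\varrho'\subseteq\varrho={}_A\operatorname{mod}$, the argument of Proposition~\ref{prp:comp_gen_q_homotopy}(a) applies verbatim. The isomorphism $\Hom{Q,A}{S_q(M)}{-}\cong\Hom{A}{M}{K_q(-)}$ holds, $K_q$ commutes with directed colimits by \cite[Lem.~4.6]{HJ-TAMS}, and $\Hom{A}{M}{-}$ commutes with directed colimits because $M$ is finitely presented. These stalks are Gorenstein projective in the $\theta'$-objectwise structure by the discussion following Setup~\ref{stp:HJ+theta}, and no extra syzygies are needed. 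Theorem~\ref{thm:compact_generation} then gives that $\{S_q(M)\mid q\in Q,\ M\in\varrho'\}$ is a set of compact generators.

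The main subtle point is the set-theoretic and saturation bookkeeping. Proposition~\ref{prp:comp_gen_q_homotopy} assumes $A\in\varrho$. If $A\notin\varrho'$, I will replace $\varrho'$ by $\varrho'\cup\{A\}$, which is still a set of finitely presented modules with the same saturation, $\operatorname{Sat}(\varrho')=\operatorname{Add}(\{A\}\cup\varrho')$. I also need $\varrho'$ to be a set, which holds since it is a subclass of the essentially small ${}_A\operatorname{mod}$ after choosing representatives. The strict inclusion $\theta'\subsetneq{}_A\operatorname{PPrj}$ plays no role in the argument beyond placing the example between the abelian and pure structures. If the finitely presented version of Theorem~\ref{thm:compact_generation} needed compactness of the syzygies, that is already handled in its proof by the identification $\sigma^nG\cong\Sigma^nG$ in the homotopy category.
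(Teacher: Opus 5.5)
Your proposal is correct and matches the paper's route. The paper's proof is a one-line appeal to Proposition~\ref{prp:comp_gen_q_homotopy}, hence to Theorem~\ref{thm:compact_generation}, together with the remark that the strict inclusion $\theta'\subsetneq{}_A\operatorname{PPrj}$ only separates this case from the pure derived category. Your extra bookkeeping makes explicit what the paper leaves implicit: you verify Setup~\ref{stp:loc_coh}, and you adjoin $A$ to $\varrho'$ without changing the saturation or the exact structure. The step you assert without argument, passing from the class $\mathscr{G}_{\theta'}$ to the set of stalks $S_q(M)$ with $M\in\varrho'\cup\{A\}$, uses that $S_q$ preserves coproducts and summands and that $\operatorname{Ext}^1$ turns coproducts into products, as in the proof of Theorem~\ref{thm:negative_compact_generation}.
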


\begin{proof}
    The proof follows by Proposition \ref{prp:comp_gen_q_homotopy}. We simply stress that we ask for the saturation $\theta'$ of $\varrho$ to be strictly included to $\theta={}_A\operatorname{PPrj}$ simply to distinguish the exact structure from the objectwise pure one, and avoid that $\operatorname{Ho}(\lMode{Q,A}{\varrho'})$ coincides with the $Q$-shaped pure derived category, which is a case that we studied earlier.
\end{proof}

\section{When is the \texorpdfstring{$Q$}{Q}-shaped homotopy category compactly generated?}\label{sec:negative_compact_gen}

For any exact structure in the lattice of exact structures of $\lMod{Q,A}$ that includes the objectwise pure exact structure and satisfies the Setup \ref{stp:HJ+theta}, the associated homotopy category is compactly generated. In this section we investigate what happens when one consider objectwise $\theta$-exact structures that are strictly included in the objectwise pure exact structure. We begin studying  the extremal example of the objectwise split exact structure.

\begin{dfn}
    An additive category $\mathcal{A}$ with coproducts is called \textbf{pure semisimple} if it has an additive generator, that is there exists an object $X$ in $\mathcal{A}$ such that $\mathcal{A}=\operatorname{Add}(X)$.
\end{dfn}

Let $\mathcal{G}$ be a locally finitely presented Grothendieck category. By \cite[Lem.~2.2~\&~Cor.~2.5]{Simson_pureglobaldim} $\mathcal{G}$ has enough pure-projective objects. In particular, every object $G$ of $\mathcal{G}$ admits a pure-projective resolution 
\[
\mathbf{P}_G=\cdots\longrightarrow P^{-n}\stackrel{d^{-n}}{\longrightarrow}P^{-n+1}\longrightarrow\cdots\longrightarrow P^{-1}\stackrel{d^{-1}}{\longrightarrow} P^0\longrightarrow G\longrightarrow 0,
\]
where each $P^i$ is pure-projective and $\Ker{d^{i}}$ is a pure subobject of $P^i$, for all $i$. We define the \textbf{pure extension functor} $\operatorname{PExt}^n_\mathcal{G}(-,-)$ by 
\[
\operatorname{PExt}^n_\mathcal{G}(G,H)\,=\, H^n(\Hom{\mathcal{G}}{\mathbf{P}_G}{H})
\]
The (left) \textbf{pure-projective dimension} (resp. (left) pure-injective dimension) of an object $G$ in $\mathcal{G}$ is defined as
\[
\operatorname{p.pd}(G)=\sup\{n\ge0\,|\, \operatorname{PExt}^n_\mathcal{G}(G,-)\neq0\}\quad \left(\text{resp. }\, \operatorname{p.id}(G)=\sup\{n\ge0\,|\, \operatorname{PExt}^n_\mathcal{G}(-,G)\neq0\}\right)
\]
The (left) \textbf{pure global dimension} of $\mathcal{G}$ is given by
\[
\operatorname{p.gl.dim}(\mathcal{G})\,=\,\sup\{\operatorname{p.pd}M\,|\, M\in \mathcal{G}\}\,=\,\sup\{\operatorname{p.id}M\,|\, M\in \mathcal{G}\}.\,
\]
Recall that we already introduced these dimensions under the notion of relative global dimension in Definition \ref{dfn:relative_gldim}. For a locally finitely presented Grothendieck category $\mathcal{G}$ (for instance $\lMod{Q,A}$) an equivalent definition by Simson \cite{Simson_puresemisimple}, states that  $\mathcal{G}$ is pure semisimple if the pure global dimension of $\mathcal{G}$ is zero, i.e. $\operatorname{p.gl.dim}(\mathcal{G})=0$, or equivalently, if every object of $\mathcal{G}$ is a direct summand of a coproduct of finitely presented objects.\,

A typical example of a pure semisimple category is the category of modules over a pure semisimple ring. Recall that a ring $R$ is called pure semisimple if every pure epimorphisms splits. It is shown in \cite{MR2657376} that this definition is in fact equivalent to the one  stated above. Moreover, in \emph{loc. cit.} it is shown that the homotopy category of complexes of $R$-modules is compactly generated if and only if the ring $R$ is pure semisimple. The following result is a $Q$-shaped analogue to \cite[Prop.~2.6]{MR2657376}

\begin{thm}\label{thm:homotopy_cg}
Let $A$ be a ring. Consider the following statements:
\begin{prt}
    \item The ring $A$ is pure semisimple;
    \item The $Q$-shaped homotopy category $\QSH{Q}{A}$ is compactly generated.
    \end{prt}
    One has that (a) implies (b), and moreover, if $Q$ is acyclic, then the statements are equivalent.
\end{thm}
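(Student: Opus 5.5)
For (a)$\Rightarrow$(b) we reduce to Proposition~\ref{prp:comp_gen_q_homotopy}. If $A$ is pure semisimple, every $A$-module is pure-projective, so $\lMod{A}=\lPPrj{A}=\operatorname{Add}(\lsmod{A})$. Hence the split structure $\theta_2=\lMod{A}$ and the pure structure $\theta_1=\lPPrj{A}$ coincide. The objectwise split and objectwise pure exact structures on $\lMod{Q,A}$ then agree, and so do the two Hovey triples: $\QSH{Q}{A}=\QSPD{Q}{A}$. Proposition~\ref{prp:pure-is-cg} says that $\QSPD{Q}{A}$ is compactly generated. For this step I would only check that the saturation of $\varrho=\lsmod{A}$ is exactly $\lMod{A}$, which is the definition of pure semisimplicity recalled above, and that the cotorsion pairs generated by $\mathscr G_\theta$ are therefore literally the same. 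Alternatively, since pure semisimplicity forces $\lsmod{A}$ to have only a set of indecomposables up to isomorphism, one can take $\varrho$ to be a set of representatives and apply Proposition~\ref{prp:comp_gen_q_homotopy} directly.

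For (b)$\Rightarrow$(a) with $Q$ acyclic, I would adapt \v{S}\v{t}ov\'{\i}\v{c}ek's argument from \cite[Prop.~2.6]{MR2657376}. The aim is to show that every $A$-module is a summand of a coproduct of finitely presented modules, that is, $\operatorname{p.gl.dim}=0$. The key tool is the adjunction between $F_q$ and evaluation in the split setting. Because the structure on $\lMod{A}$ is split, $\Hom{\QSH{Q}{A}}{F_q(M)}{X}$ should be computed by a quotient of $\Hom{A}{M}{X(q)}$, and $F_q(M)$ is projective-injective, so that route degenerates. The better tool is the stalks: in $\QSH{Q}{A}$ one has $\Hom{}{S_q(M)}{X}\cong$ the homotopy classes of maps $M\to K_q(X)$ modulo those factoring through the relevant projectives. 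The plan is to show that compact objects of $\QSH{Q}{A}$ have, after suitable truncation, values that are finitely presented. Then compact generation would let us build, for an arbitrary module $N$, the stalk $S_q(N)$ as a homotopy colimit of compacts, and extract that $N$ is a summand of a coproduct of finitely presented modules.

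Concretely, I would proceed in three steps. First, using acyclicity of $Q$, I would show that $S_q(-)\colon\lMod{A}\to\QSH{Q}{A}$ is fully faithful up to a computable ideal, or at least that $\Hom{\QSH{Q}{A}}{S_q(M)}{S_q(N)}\cong\Hom{A}{M}{N}$ for $q$ a source or sink of a finite full subcategory. Acyclicity gives a filtration of the finite support by ``degree'', so maps through $F_p(T)$ can be controlled; this mirrors the bounded-complex situation. Second, I would show that a compact object $C$ of $\QSH{Q}{A}$ is a summand of an object with finitely presented values on a finite support. For this I would write any $X$ as a directed colimit of its finitely presented subobjects and use compactness to split off, following Neeman's trick that a compact object is a summand of a finite extension of the generators' images. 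Third, applying $\Hom{}{C}{-}$ to the canonical coproduct map $\coprod S_q(M_i)\to S_q(N)$ induced by a pure epimorphism $\bigoplus M_i\to N$ with $M_i$ finitely presented, compact generation forces this map to be a split epimorphism in $\QSH{Q}{A}$. Via the full faithfulness of the first step, this gives a splitting in $\lMod{A}$, so every pure epimorphism splits and $A$ is pure semisimple.

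I expect the main obstacle to be the second step together with the detection argument in the third. One must know that the compact generators can be taken with finitely presented values. Then a morphism that is $\Hom(C,-)$-surjective for all compact $C$, but whose cone need not vanish, must still be shown to split. In \cite{MR2657376} this is handled by a Kaplansky/cardinality argument on complexes. For acyclic $Q$, I would try first to reduce to the case of a finite full subcategory containing $q$ and its neighbourhoods $\mathsf N_\pm(q)$, where $\QSH{Q}{A}$ restricted behaves like the homotopy category of a finite-dimensional ``bounded'' shape, and to import the corresponding statement there.
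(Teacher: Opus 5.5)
Your (a)$\Rightarrow$(b) is exactly the paper's argument. Your Step~1 is also correct, and it is the key computation of the paper. If $\mathbb{S}q\neq q$, a generator $\alpha$ of $Q(q,\mathbb{S}q)\cong\Bbbk$ induces an isomorphism $F_q(N)(q)=N\to F_q(N)(\mathbb{S}q)$. Hence $K_qF_q(N)=0$ and $\Hom{Q,A}{S_q(M)}{F_q(N)}=0$. Every map from a projective into $S_q(N)$ lifts along the deflation $F_q(N)\to S_q(N)$, so a morphism $S_q(M)\to S_q(N)$ vanishing in $\QSH{Q}{A}$ is already zero. This needs a non-isolated $q$. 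If every object of $Q$ is isolated (e.g.\ a single object with endomorphism ring $\Bbbk$), then $S_q=F_q$ and $\QSH{Q}{A}=0$, so (b)$\Rightarrow$(a) fails for any ring that is not pure semisimple. The paper's claim that acyclicity gives $q\neq\mathbb{S}q$ tacitly assumes this too.

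The converse breaks down in Steps~2 and~3.

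\textbf{Step~3.} Surjectivity of $\operatorname{Hom}(C,f)$ for all compact $C$ only says that the morphism $S_q(N)\to\operatorname{cone}(f)$ in the triangle on $f$ is phantom, not that it vanishes. So nothing forces $f$ to split. In fact nothing in Steps~1--3 distinguishes the split structure from the pure one. Take $\mathbf{D}^{\mathrm{pure}}(A)$, the case $Q=Q^{\mathrm{cpx}}$, which is acyclic. It is compactly generated for every ring, stalks are fully faithful, and compacts are summands of bounded complexes $C$ of finitely presented modules. For a non-split pure exact sequence $0\to K\to P\xrightarrow{\pi}N\to0$, every map $C\to N$ factors through the finitely presented module $\operatorname{Cok}(C^{-1}\to C^{0})$, on which $\operatorname{PExt}^1(-,K)$ vanishes. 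So $\operatorname{Hom}(C,\pi)$ is surjective for all compact $C$, although $\pi$ does not split. Your reasoning would therefore prove that every ring is pure semisimple.

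\textbf{Step~2.} This step is unsupported as well. The compact generators are arbitrary objects. Moreover, directed colimits in $\lMod{Q,A}$ are not homotopy colimits for the split structure, since a directed colimit of split conflations is only pure. So compactness in $\QSH{Q}{A}$ cannot be turned into finite presentation by writing objects as directed colimits of finitely presented subobjects.

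The missing idea is that compactness is irrelevant: only the existence of a generating set is used, as in the paper (following \cite{MR2657376}). The argument runs as follows.
\begin{enumerate}
\item Replace the compact generators by their coproduct, a single generator, and lift it to $Y\in\lMod{Q,A}$. Set $\mathcal{B}=\operatorname{Add}\bigl(\bigoplus_{q}Y(q)\bigr)$.
\item Show that the objects of $\QSH{Q}{A}$ represented by functors with all values in $\mathcal{B}$ form a localising subcategory. Coproducts are computed objectwise. Shifts and cones are computed by objectwise split conflations involving $\mathbb{F}(M)$ and $\mathbb{G}(M)$. Their values lie in $\mathcal{B}$, because by Hom-finiteness and local boundedness they are finite sums of summands of finite powers of the $M(q')$.
\item This subcategory contains the generator, so it is everything. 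Hence $S_q(M)\cong R$ in $\QSH{Q}{A}$ with all $R(p)\in\mathcal{B}$.
\item Your Step~1 upgrades the stable retraction $S_q(M)\to R\to S_q(M)$ to an honest one. Evaluating at $q$ shows $M$ is a summand of $R(q)$.
\end{enumerate}
Therefore $\lMod{A}=\operatorname{Add}\bigl(\bigoplus_qY(q)\bigr)$, which is pure semisimplicity. Replacing your Steps~2--3 by this argument completes the proof.
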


\begin{proof}

       We begin by showing that (a) $\Rightarrow$ (b). Since $A$ is a pure semisimple ring, every module is pure-projective.
    In particular, this implies that every objectwise pure exact sequence in $\lMod{Q,A}$ is an objectwise split exact sequence and the objectwise pure exact structure coincides with the objectwise split exact structure. It follows that, \cite[Thm.~5.2]{SV_Q_shaped_homotopy} produces the same homotopy category for both the objectwise pure and objectwise split exact model structure, that is, the $Q$-shaped homotopy category coincides with the $Q$-shaped pure derived category, and hence, it is compactly generated by Proposition \ref{prp:pure-is-cg}.\,

    For (b) $\Rightarrow$ (a), assume $Q$ is acyclic, and that there exists a set $\mathcal{S}$ of compact objects in $\QSH{Q}{A}$ such that $\QSH{Q}{A}=\operatorname{Loc}(\mathcal{S})$. We prove that there exists an object $X\in\lMod{A}$ such that $\lMod{A}=\operatorname{Add}(X)$, that is $\lMod{A}$ is a pure semisimple category. Notice that for the class $\theta=\lMod{A}$ the category $\lMode{Q,A}{\theta}$ endowed with the objectwise split exact structure is an efficient Frobenius exact category and has set-indexed coproducts (as we argued in Example \ref{exa:Prj-PPrj-Mod}). It follows that the stable category which is equivalent to $\QSH{Q}{A}$ admits set-indexed coproducts (see \cite[proof~of~Cor.~3.12]{SaorinStovicek}) and consequently it has splitting idempotents by \cite[Prop.~1.6.8]{Nee}. This allows us to replace the set $\mathcal{S}$ with a single generator $\tilde{Y}=\coprod_{Z\in\mathcal{S}}Z$ in $\QSH{Q}{A}$. {\color{black}Notice that, $\operatorname{Loc}(\{\tilde{Y}\})\,=\,\operatorname{Loc}(\mathcal{S})\,=\,\QSH{Q}{A}$. In particular, $\operatorname{Loc}(\{\tilde{Y}\})\subseteq\operatorname{Loc}(\mathcal{S})$ since the latter is closed under coproducts, whereas the inverse inclusion is given by the fact that $\operatorname{Loc}(\{\tilde{Y}\})$ is closed under summands.} We lift $\tilde{Y}$ to an object $Y$ in $\lMod{Q,A}$ by taking its cofibrant replacement with respect to the $\theta$-exact model structure. That is, $Y$ is weakly equivalent (and thus isomorphic in $\QSH{Q}{A}$) to $\tilde{Y}$. For convenience we will use bracket notation for the iso-closure of objects in the stable category $\QSH{Q}{A}$, for example, here one has that $\tilde{Y}\cong[Y]$ in $\QSH{Q}{A}.$ \,

    Consider now the object $$X=\coprod_{q\in Q}E_q(Y)$$
    in $\lMod{A}$. This is our candidate additive generator of $\lMod{A}$, that is, we strive to prove the equality $\lMod{A}=\operatorname{Add}_A(X)$. For brevity denote by $\mathcal{B}=\operatorname{Add}_A(X)$ and define the subcategory $\qb{Q}\subset\lMod{Q,A}$ by
    \[
    \qb{Q}\,:=\,\{ M\in\lMod{Q,A}\,|\, M(q)\in \mathcal{B},\text{ for every } q\in Q\},
    \]
that is, all the $\Bbbk$-linear functors $Q\to\lMod{A}$ that take values in $\mathcal{B}$. Consider also the subcategory
\[
\mathcal{L}_\mathcal{B}\,:=\,\{T\in\QSH{Q}{A}\,|\, T\cong[M]\text{ in } \QSH{Q}{A}\text{ for some } M\in\qb{Q}\}\,\subset\,\QSH{Q}{A}.
\]

We prove that $\mathcal{L}_\mathcal{B}$ is a localising subcategory of $\QSH{Q}{A}$. First notice that $\mathcal{L}_\mathcal{B}$ is closed under coproducts. Indeed, if $\{M_i\}_{i\in I}$ is a set-indexed family of objects in $\qb{Q}$, then the evaluation functor $E_q$ preserves coproducts (as a left adjoint to the functor $G_q(-)$, see \cite[Cor.~3.9]{HJ-JLMS}), that is $(\coprod_{i\in I}M_i)(p)\cong\coprod_{i\in I}(M_i(p))$ in $\lMod{A}$ and thus $(\coprod_{i\in I}M_i)(p)$ lies in $\mathcal{B}$ since $\mathcal{B}$ is closed under coproducts. We conclude that $\coprod_{i\in I}M_i$ lies in $\qb{Q}$.\,

For the closure under negative suspensions, let $M$ be an object in $\qb{Q}$ and consider the conflation in the objectwise split exact structure
\[
0\longrightarrow \Omega M\longrightarrow \mathbb{F}(M)\longrightarrow M\longrightarrow0.
\]
Evaluating at an object $p\in Q$, yields a split short exact sequence
\begin{equation}\label{eq:split_cosyzygy}
0\longrightarrow (\Omega M)(p)\longrightarrow \mathbb{F}(M)(p)\longrightarrow M(p)\longrightarrow0
\end{equation}
in $\lMod{A}$ and hence $\mathbb{F}(M)(p)\,\cong\,M(p)\oplus(\Omega M)(p)$. Notice that $\mathbb{F}(M)(p)=\coprod_{q\in Q}Q(q,p)\otimes M(q)$. Since $Q$ is Hom-finite, $Q(q,p)$ is a finitely generated projective $\Bbbk$-module, thus there exists a finitely generated projective $\Bbbk$-module $P'$ and a non-negative integer $d\ge0$ such that $Q(q,p)\oplus P'\cong \Bbbk^d$. Tensoring (over $\Bbbk$) with the $A$-module $M(q)$ one has
\[
(Q(q,p)\otimes M(q))\oplus (P'\otimes M(q))\,\cong\, \Bbbk^d\otimes M(q)\,\cong\, M(q)^d.
\]
Thus, $Q(q,p)\otimes M(q)$ lies in $\mathcal{B}$ and so does $\mathbb{F}(M)(p)$ and since \eqref{split_cosyzygy} splits, $(\Omega M)(p)$ is a summand of an object in $\mathcal{B}$, and hence it lies in there as well. We conclude that for an $M$ in $\qb{Q}$ both $\mathbb{F}(M)$ and $\Omega M$ belong in $\qb{Q}$. \,

For positive suspensions, let $M$ lie in ${}_Q\mathcal{B}$ and choose a conflation in the objectwise split exact structure
\[
0\longrightarrow M\longrightarrow\mathbb{G}(M)\longrightarrow \Omega^{-1}M\longrightarrow0.
\]
Evaluating at an object $p\in Q$, yields a split exact sequence
\[
0\longrightarrow M(p)\longrightarrow\mathbb{G}(M)(p)\longrightarrow (\Omega^{-1}M)(p)\longrightarrow0
\]
in $\lMod{A}$. Notice that by \cite[Lem.~3.4]{HJ-TAMS} and the local boundedness condition on $Q$, one has $$\mathbb{G}(M)(p)=\left(\prod_{q\in Q}G_q(M)\right)(p)\cong \left(\coprod_{q\in Q}F_{\mathbb{S}q}(M)\right)(p),$$hence the previous arguments can be applied also in this case to show that $\Omega^{-1} M$ lies in ${}_Q\mathcal{B}$.\,

To prove that $\mathcal{L}_\mathcal{B}$ is closed under cones, consider a triangle 
\[
[M]\stackrel{[f]}{\longrightarrow}[N]\longrightarrow C_{[f]}\longrightarrow\Sigma[M],
\]
 in $\QSH{Q}{A}$ with $M,N$ in $\qb{Q}$. We represent $[f]$ by a morphism $f\colon M\to N$ in $\lMod{Q,A}$ (note that there is no need for bifibrant replacement here, since every object is bifibrant), thus the above triangle gives a conflation (see \cite[Thm.~I.2.6]{Happel})
 \[
 0\longrightarrow M\longrightarrow N\oplus \mathbb{F}(M)\longrightarrow C\longrightarrow0
 \]
 where $C$ is the pushout of the map $f$ along the map $M\to \mathbb{F}(M)$ and $C_{[f]}\cong [C]$ in $\QSH{Q}{A}$. Evaluating the conflation above at an object $p\in Q$, one has $N(p)\oplus\mathbb{F}(M)(p)\cong M(p)\oplus C(p)$, where $N(p), \, M(p)$ and $ \mathbb{F}(M)(p)$ (by the arguments presented in the closure under suspensions), all lie in $\mathcal{B}$, thus so does $C(p)$, and consequently, $C$ belongs in $\qb{Q}$.\,
 
 We conclude that $\mathcal{L}_\mathcal{B}$ is a localising subcategory of $\QSH{Q}{A}$. Notice that $Y$ lies in $\qb{Q}$, since $Y(p)$ is in $\mathcal{B}$ for any $p\in Q$ by construction thus
 \[
 \QSH{Q}{A}=\operatorname{Loc}(\{Y\})\subseteq\mathcal{L}_\mathcal{B}\subseteq\QSH{Q}{A},
 \]
 so $\QSH{Q}{A}\,=\,\mathcal{L}_\mathcal{B}$, meaning that every object in $\QSH{Q}{A}$ is isomorphic to one represented by an object in $\qb{Q}$. We will use this density argument to show that $\lMod{A}$ is additively generated by the subcategory $\mathcal{B}=\operatorname{Add}_A(X)$, and thus $A$ is a pure semisimple ring.\,

 Let $M$ be an $A$-module and $q$ an object in $Q$. Consider the stalk functor $S_q(M)$ in $\lMod{Q,A}$. Since $\QSH{Q}{A}\,=\,\mathcal{L}_\mathcal{B}$, there exists an object $R$ in $\qb{Q}$ such that $[S_q(M)]\,\cong\,[R]$ in $\QSH{Q}{A}$. We represent this isomorphism by a morphism $a\colon S_q(M)\to R$ in $\lMod{Q,A}$. Notice that, there are morphisms $a\colon S_q(M)\to R$ and $b\colon R\to S_q(M)$ such that, $[b][a]\cong \operatorname{id}_{S_q(M)}$ in $\QSH{Q}{A}$. Equivalently, $ba-\operatorname{id}_{S_q(M)}$ factors through a projective/injective object in the Frobenius exact category of the bifibrant objects (which in our case is the whole $\lMod{Q,A}$).  In \cite[Prop.~4.17]{SV_Q_shaped_homotopy} it is shown that the projective/injective objects are summands of coproducts of objects of the form $F_p(N)$ for $p$ in $ Q$ and $N$ in $\lMod{A}$. Assume that $ba-\operatorname{id}_{S_q(M)}=:h$ factors through the projective/injective object $P$ that is a direct summand of an object $$D=\bigoplus_{p\in Q}F_p(N)=\bigoplus_{p\in \mathsf{N}_{-}(q)}F_p(N),$$ that is, there is a retraction $P\stackrel{j}{\longrightarrow}D\stackrel{r}{\longrightarrow}P$, with $rj=\operatorname{id}_P$. Notice that, we index the direct sum over the objects in $\mathsf{N}_{-}(q)$, since outside of this finite set, the values of $F_p(-)$ are zero. Diagrammatically, we have
 \begin{equation}\label{eq:stalk_id_decomp}
 \begin{tikzcd}
	{S_q(M)} && {S_q(M)} \\
	& P \\
	& D
	\arrow["{h=ba-\operatorname{id}_{S_q(M)}}", from=1-1, to=1-3]
	\arrow["x"', from=1-1, to=2-2]
	\arrow["u"{description}, bend right=20pt, from=1-1, to=3-2]
	\arrow["y"', from=2-2, to=1-3]
	\arrow["j"', shift right=2, from=2-2, to=3-2]
	\arrow["v"{description}, bend right=20pt, from=3-2, to=1-3]
	\arrow["r"', shift right=2, from=3-2, to=2-2]
\end{tikzcd}\end{equation}
 Evaluating at an object $\ell$ of $Q$ we have $D(\ell)=\bigoplus_{p\in\mathsf{N}_{-}(\ell)}F_p(N)(\ell)$. Let us denote by 
 \[
 \iota_p\colon F_p(N)\longrightarrow D \quad\text{and}\quad \pi_p\colon D\longrightarrow F_p(N)
 \]
 the canonical inclusions and projections. At the object $\ell\in Q$, we have $\operatorname{id}_{D(\ell)}=\sum_{p\in \mathsf{N_{-}(\ell)}}(\iota_p)_\ell (\pi_p)_\ell$. Since $h=vu$ as seen in the diagram \eqref{stalk_id_decomp}, evaluating at the object $\ell\in Q$, one has that the $\ell$-th summand of $h$ is given by a finite sum:
 \[
 h_\ell=v_\ell u_\ell=v_\ell\operatorname{id}_{D(\ell)}u_\ell=v_\ell\left(\sum_{p\in\mathsf{N}_{-}(\ell)}(\iota_p)_\ell(\pi_p)_\ell\right)u_\ell=\sum_{p\in\mathsf{N}_{-}(\ell)}(v\iota_p)_\ell(\pi u_p)_\ell\,.
 \]
 

 We claim that each component $h_\ell,\, \ell\in Q$ of $h$ vanishes. For $\ell\neq q$, it is clear that $S_\ell(M)=0$, thus our claim holds. \,
 
  We now prove that $h=ba-\operatorname{id}_{S_q(M)}=0$, even for $\ell=q$. For this purpose we use the combinatorial structure of $Q$. Indeed by the Serre duality (denote by $\mathbb{S}$ the Serre functor of $Q$), one has
 \[
 Q(q,\mathbb{S}q)\,\cong\,\Hom{\Bbbk}{Q(q,q)}{\Bbbk}\,\cong\, \Hom{\Bbbk}{\Bbbk}{\Bbbk}\,\cong\,\Bbbk,
 \]
 where the first isomorphism follows from the Serre duality on $Q$ while the second one follows from the retraction property of $Q$ (see \cite[Stp.~2.5(4)]{HJ-JLMS}) and the acyclicity of $Q$, meaning that $\mathfrak{r}_q=0$. Moreover, since $Q$ is acyclic, one has that $q\neq \mathbb{S}q$. Consider a map $\phi\colon S_q(M)\to F_q(N)$ and evaluate at the object $q$ in $Q$ (otherwise it is the zero morphism), that is there is a map $\phi_q\colon M\to F_q(N)(q)$, where $F_q(N)(q)=Q(q,q)\otimes N=N$. Let $\alpha\in Q(q,\mathbb{S}q)$ be the generator corresponding to the unit $\mathbf{1}$ of $\Bbbk$. Then, there is a natural transformation 
 \[\begin{tikzcd}
	q & {S_q(M)(q)=M} & {F_q(N)(q)=N} \\
	{\mathbb{S}q} & {S_q(M)(\mathbb{S}q)=0} & {F_q(N)(\mathbb{S}q)\cong N}
	\arrow["\alpha"', from=1-1, to=2-1]
	\arrow["{\phi_q}", from=1-2, to=1-3]
	\arrow["{S_q(M)(\alpha)}"', from=1-2, to=2-2]
	\arrow["{F_q(N)(\alpha)}", from=1-3, to=2-3]
	\arrow["{\phi_{\mathbb{S}q}}"', from=2-2, to=2-3]
\end{tikzcd}\]
 that is, $F_q(N)(\alpha)\circ\phi_q\,=\,\phi_{\mathbb{S}q}\circ S_q(M)(\alpha)$. However, since $\mathbb{S}q\neq q$ we have $S_q(M)(\mathbb{S}q)=0$ and so $F_q(N)(\alpha)\circ\phi_q=0$. Under the identification
 \[
 F_q(N)(\mathbb{S}q)\,=\,Q(q,\mathbb{S}q)\otimes N \,\cong\, \Bbbk\otimes N\,\cong\, N
 \]
 the morphism $F_q(N)(\alpha)$ is the identity on $N$ which forces $\phi_q=0$ and since $\supp{S_q(M)}=\{q\}$, one has $\phi=0$ at every $q\in Q$. We conclude that $\Hom{Q,A}{S_q(M)}{F_q(N)}=0$ for every $M,N$ in $\lMod{A}$. Thus, $ba-\operatorname{id}_{S_q(M)}=0$ even in the model category. Evaluating the functor $S_q(M)$ at the vertex $q$ in $Q$, we have that $M$ is a direct summand of $R(q)$ and thus $M$ lies in $\mathcal{B}$. Since $M$ was an arbitrary $A$-module, we conclude that $\lMod{A}=\mathcal{B}=\operatorname{Add}_A(X)$. This concludes the proof.
\end{proof}

\subsection{Homotopy categories of complexes}
Intuitively, the phenomenon described in Theorems \ref{thm:homotopy_cg} and Proposition \ref{prp:comp_gen_q_homotopy}, implies a limit of compact generation that is controlled by purity. In the special case that $Q=Q^{\text{cpx}}$ and $\lMod{Q,A}$ is the category of complexes of $A$-modules, we follow this intuition and find ``cardinal bounds'' that guarantee the well generation of such homotopy categories. However, as we show later on in subsection \ref{subsec:applications-generation}, there are exact structures that are controlled by classes of modules that are strictly larger than the finitely presented ones, who induce exact model structures whose homotopy category is still compactly generated.  

\begin{dfn}\label{dfn:kappa_presented}
    Let $\kappa$ be a regular cardinal. We denote by $\lMode{A}{\kappa}$ the class of all \textbf{$\kappa$-presented} $A$-modules. We denote by $\lPPrj{A}^\kappa$ the additive closure of all $\kappa$-presented $A$-modules and we call them \textbf{$\kappa$-pure-projective modules}. 
\end{dfn}

\begin{rmk}\label{rmk:set_of_representatives}
Notice that for $\kappa=\aleph_0$ in Definition \ref{dfn:kappa_presented} one recovers the notions of finitely presented and pure-projective modules. Moreover, in the ``Notation'' environment below \cite[Rmk.~1.19]{AdamekRosicky} in loc. cit., all $\kappa$-presentable objects in a locally $\kappa$-presentable category admit a \emph{set} of representatives. Thus, $\lPPrj{A}^\kappa=\operatorname{Add}(\lMod{A}^\kappa)$ is additively generated by a set of $\kappa$-presented modules.    
\end{rmk}

\begin{lem}\label{lem:add_set_bounded}
    Let $\theta\subseteq\lMod{A}$ be a class of modules. The following are equivalent.
    \begin{prt}
        \item There exists a regular cardinal $\kappa$, such that $\theta\subseteq\lPPrj{A}^\kappa$;
        \item There exists a set $\mathcal{S}\subseteq\lMod{A}$ of $A$-modules, such that $\theta\subseteq\operatorname{Add}(\mathcal{S)}$.
    \end{prt}
\end{lem}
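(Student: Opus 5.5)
For (a)$\Rightarrow$(b) I will use Remark~\ref{rmk:set_of_representatives}. Fix a set $\mathcal{S}$ of representatives of the isomorphism classes of $\kappa$-presented $A$-modules; it is a set because $\lMod{A}$ is locally $\kappa$-presentable. Then $\lPPrj{A}^\kappa=\operatorname{Add}(\lMode{A}{\kappa})=\operatorname{Add}(\mathcal{S})$, since $\operatorname{Add}$ only depends on isomorphism classes. Hence $\theta\subseteq\operatorname{Add}(\mathcal{S})$, which is (b).

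For (b)$\Rightarrow$(a), let $\mathcal{S}$ be a set with $\theta\subseteq\operatorname{Add}(\mathcal{S})$. The plan is to find one regular cardinal $\kappa$ such that every $S\in\mathcal{S}$ is $\kappa$-presented.

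First, each $S\in\mathcal{S}$ is $\lambda_S$-presented for some regular $\lambda_S$. To see this, take a presentation $A^{(J_S)}\to A^{(I_S)}\to S\to 0$ and choose $\lambda_S$ regular with $|I_S|,|J_S|<\lambda_S$, for instance the successor cardinal of $\max(|I_S|,|J_S|,\aleph_0)$. This is exactly the characterisation of $\kappa$-presented modules recalled in Subsection~\ref{subsec:kappa-presentability-well-generation}.

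Next, since $\mathcal{S}$ is a set, $\mu=\sup_{S\in\mathcal{S}}\lambda_S$ is a cardinal. Let $\kappa=\mu^+$; successor cardinals are regular under the axiom of choice. Then $|I_S|,|J_S|<\kappa$ for every $S$, so every $S$ is $\kappa$-presented. Equivalently, one may invoke the remark in the preliminaries that any set of objects of a locally presentable category consists of $\lambda$-presentable objects for some regular $\lambda$.

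Finally, $\mathcal{S}\subseteq\lMode{A}{\kappa}$, so $\operatorname{Add}(\mathcal{S})\subseteq\operatorname{Add}(\lMode{A}{\kappa})=\lPPrj{A}^\kappa$, and therefore $\theta\subseteq\lPPrj{A}^\kappa$.

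No step is genuinely hard. The only point needing care is the uniform bound: the supremum exists only because $\mathcal{S}$ is a set, and this is precisely what fails for a proper class such as $\lMod{A}$ when $A$ is not pure semisimple. I would also note that $\operatorname{Add}$ means summands of arbitrary coproducts, so the inclusion in the last step is immediate and requires no closure property of $\kappa$-presented modules under coproducts.
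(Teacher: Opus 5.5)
Your proposal is correct and follows essentially the same route as the paper. For (a)$\Rightarrow$(b) you pass to a set of representatives of the $\kappa$-presented modules, and for (b)$\Rightarrow$(a) you bound the presentations of the set $\mathcal{S}$ by a single regular cardinal and then apply $\operatorname{Add}$. The only difference is that you make the choice of $\kappa$ explicit (the successor of the supremum), where the paper simply appeals to local presentability.
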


\begin{proof}
    \proofoftag{a)$\Rightarrow$ (b} Let $\theta\subseteq\lPPrj{A}^\kappa$. Since $\lPPrj{A}^\kappa=\operatorname{Add}(\lMod{A}^\kappa)$ by Remark \ref{rmk:set_of_representatives}, one can always find a set of representatives such that $\lPPrj{A}^\kappa$ is the additive closure of a set.\,
    \proofoftag{b)$\Rightarrow$ (a} Conversely, suppose that $\theta\subseteq\operatorname{Add}(\mathcal{S})$ for some set $\mathcal{S}$. Since $\lMod{A}$ is locally presentable and $\mathcal{S}$ is a set, there exists a sufficiently large cardinal regular cardinal $\kappa$, such that every $S\in\mathcal{S}$ is $\kappa$-presented. Thus, $\mathcal{S}\subseteq\lMod{A}^\kappa$. Taking additive closures one has
    \[
    \theta\subseteq\operatorname{Add}(\mathcal{S})\subseteq\operatorname{Add}(\lMod{A}^\kappa)\,=\,\lPPrj{A}^\kappa.
    \]
    This completes the proof.
\end{proof}

\begin{thm}\label{thm:negative_compact_generation}
Let $\theta$ be a class of $A$-modules satisfying
\stpref{HJ+theta}. If there exists a set $\mathcal{S}$ of $A$-modules, such that $\theta\subseteq\operatorname{Add}(\mathcal{S})$,
then $\QST{Q}{A}$ is a well generated triangulated category.
In the special case that $Q=Q^{\text{cpx}}$, the following conditions are equivalent.
\begin{prt}
\item $\mathbf{K}^{\theta}(A)$ is well generated;
\item $\mathbf{K}^{\theta}(A)$ admits a generating set of objects;
\item there exists a regular cardinal $\kappa$ such that $\theta\subseteq\lPPrj{A}^{\kappa}.$
\end{prt}
\end{thm}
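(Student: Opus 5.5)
For the first assertion I would realise $\QST{Q}{A}$ as the homotopy category of a stable combinatorial model category and invoke Rosick\'y's theorem \cite[Thm.~4.9]{Rosicky_Brown_rep_in_hmtp}. By Lemma~\ref{lem:add_set_bounded} we may assume $\theta\subseteq\operatorname{Add}(\mathcal{S})$ with $\mathcal{S}$ a set of $\kappa$-presented modules. Since $\theta$ is saturated and contains $A$, we may enlarge $\mathcal{S}$ to $\mathcal{S}\cup\{A\}$. The $\theta$-exact structure is then the one determined by the set $\mathcal{S}'=\mathcal{S}\cap\theta$, or more carefully by a set of representatives of those summands of objects of $\mathcal{S}$ lying in $\theta$. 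This is a set because summands of $\kappa$-presented modules are $\kappa$-presented.

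The key steps are then as follows. First, $\lMode{Q,A}{\theta}$ is efficient and of Grothendieck type in the sense of \cite{SaorinStovicek}; concretely, the cotorsion pair $(\mathcal{C}_\theta,\mathcal{W}_\theta)$ is cogenerated by the set $\{\Sq{q}(T)\mid q\in Q,\ T\in\mathcal{S}'\}$, so $\mathcal{C}_\theta$ is deconstructible. Second, a deconstructible projective Hovey triple in an efficient exact category yields a cofibrantly generated, hence combinatorial, model structure, since $\lMod{Q,A}$ is locally presentable. Third, Rosick\'y's theorem gives well-generation. Alternatively, and more directly, I could show that the set $\{\Sq{q}(T)\}$ together with its syzygies consists of $\lambda$-small objects for $\lambda$ large. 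The argument would follow Lemma~\ref{lem:compacts} with $\lambda$-presentability in place of finite presentation. It would also use that $\mathcal{W}_\theta$ is detected by $\operatorname{Ext}^1$ against this set, exactly as in Theorem~\ref{thm:compact_generation}, so that the set generates. Perfectness of the generating set then follows from Krause's criterion, using that $\lambda$-small generators whose Hom-functors commute with $\lambda$-filtered colimits give perfect generation.

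For $Q=Q^{\mathrm{cpx}}$, (c)$\Rightarrow$(a) is the first part via Lemma~\ref{lem:add_set_bounded}, and (a)$\Rightarrow$(b) is immediate from the definition. The main obstacle is (b)$\Rightarrow$(c). Here I would mimic the proof of Theorem~\ref{thm:homotopy_cg}, now for ordinary complexes, in the style of \cite[Prop.~2.6]{MR2657376}. Given a generating set, take its coproduct $Y$ with cofibrant representative, put $X=\coprod_n Y^n$ and $\mathcal{B}=\operatorname{Add}(X)$. By Theorem~\ref{thm:bifibrant_description}, cofibrant complexes have components in $\theta$, so $X\in\theta$. Next, show that the complexes with components in $\mathcal{B}$, taken modulo weak equivalence, form a localising subcategory. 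The conflations used are degreewise $\theta$-exact, and projectives are sums of discs on modules in $\theta$. Hence I expect to need that the class of complexes with components in $\mathcal{B}\cap\theta$ is closed under suspensions, cones and coproducts, exactly as in the displayed argument.

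Then, for $T\in\theta$, the stalk $S_0(T)$ is cofibrant and isomorphic in $\mathbf{K}^\theta(A)$ to some $R$ with components in $\mathcal{B}$. I would aim to lift the mutually inverse maps to genuine maps $a,b$ with $ba-\mathrm{id}$ factoring through a relatively projective complex. For complexes, $Q^{\mathrm{cpx}}$ is not acyclic, so the acyclicity trick does not apply; this is the delicate point. Instead I would compose with the degree-$0$ component and use the factorisation through discs $F_p(N)$ to write $\mathrm{id}_T$ as $b_0a_0$ minus a map that factors through $N\oplus N'$. That map is given by a null-homotopy $s$, whose contributions are $d s+s d$ in degree $0$ and pass through $R^{-1}$ and $R^{1}$. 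Hence $T$ is a summand of $R^{-1}\oplus R^0\oplus R^1$, a module in $\mathcal{B}$. This gives $\theta\subseteq\operatorname{Add}(X)$, and Lemma~\ref{lem:add_set_bounded} yields (c).
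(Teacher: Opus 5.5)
Your first-part plan matches the paper's: a set of stalks generates the cotorsion pair, which gives a cofibrantly generated, combinatorial, stable model structure, and then Rosick\'y's theorem applies. However, the reduction to a set fails as you state it. From $\theta\subseteq\operatorname{Add}(\mathcal{S})$ you only know that each $T\in\theta$ is a direct summand of some $\bigoplus_i S_i$ with $S_i\in\mathcal{S}$. Nothing forces $T$ to be built from summands of the $S_i$ that themselves lie in $\theta$, because Krull--Schmidt fails in general. So your set $\mathcal{S}'$ need not determine $\theta$, hence need not determine $\mathcal{W}_\theta$ or the projectives. The missing ingredient is the Kaplansky--Walker-type decomposition the paper invokes. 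For a suitably large regular $\lambda$, every summand of a direct sum of $\lambda$-presented modules is itself a direct sum of $\lambda$-presented modules. These pieces lie in $\theta$ by closure under summands, so $\theta=\operatorname{Add}(\mathcal{S}_\theta)$ for a set $\mathcal{S}_\theta$ of representatives of the $\lambda$-presented modules in $\theta$. Only then do $\{\Sq{q}(T)\mid T\in\mathcal{S}_\theta\}$ and $\{\Fq{q}(T)\mid T\in\mathcal{S}_\theta\}$ yield the generating sets. Your alternative via $\lambda$-small generators would also need to verify perfectness, which is a genuine extra condition when $\lambda>\aleph_0$.

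For (b)$\Rightarrow$(c) you take a different route from the paper: you generalise the additive-generator argument of Theorem~\ref{thm:homotopy_cg}. The paper instead argues contrapositively. For an arbitrary set $\mathcal{G}$ it picks $M\in\theta\setminus\operatorname{Add}(\lsmod{A}\cup\mathcal{S}_{\mathcal{G}})$. It then builds a pure exact, non-$\theta$-exact sequence $\xi$ ending in $M$ through which all maps from components of $\mathcal{G}$ lift. Finally it shows $\Hom{\mathbf{K}^{\theta}(A)}{G}{\Delta(\xi)[n]}=0$ for all $G$ and $n$, since each such chain map factors through a contractible pullback.

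The difference matters. You pass from $Y\in\mathcal{L}_{\mathcal{B}}$ to $\mathcal{L}_{\mathcal{B}}=\mathbf{K}^{\theta}(A)$, which needs $\operatorname{Loc}(\mathcal{G})=\mathbf{K}^{\theta}(A)$. In (b), as the paper's proof makes clear, a generating set only means that $\Hom{\mathbf{K}^{\theta}(A)}{G}{X[n]}=0$ for all $G,n$ forces $X=0$. Upgrading this to localising generation needs a Bousfield triangle, i.e.\ Brown representability. That is unavailable here, because well generation is exactly what is in question. So as written you prove (a)$\Rightarrow$(c), or (b)$\Rightarrow$(c) for localising generation, but not the stated implication.

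Two smaller points:
\begin{itemize}
\item $\mathcal{L}_{\mathcal{B}}$ should be the isomorphism closure of \emph{cofibrant} complexes with components in $\mathcal{B}$. Then morphisms are chain maps modulo homotopy, and cones are mapping cones, which are cofibrant as degreewise split extensions.
\item $Q^{\mathrm{cpx}}$ is acyclic in the sense used here. In any case, any homotopy on an endomorphism of $S_0(T)$ has components $S_0(T)^n\to S_0(T)^{n-1}$, all zero. So $ba=\mathrm{id}$ on the nose and $T$ is a summand of $R^0$. Your step through $R^{-1}\oplus R^{1}$ is muddled, though its conclusion survives.
\end{itemize}
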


\begin{proof}
By \lemref{add_set_bounded}, the existence of a set
$\mathcal{S}\subseteq\lMod{A}$ such that
\(
\theta\subseteq\operatorname{Add}(\mathcal{S}),
\) implies the existence of a regular cardinal $\kappa$ such that $\theta\subseteq\lPPrj{A}^\kappa$.
Choose another regular cardinal $\lambda$ such that every object of
$\mathcal{S}$ is $\lambda$-presented. After replacing $\lambda$ by a
larger regular cardinal if necessary, the $\lambda$-presented version
of Walker's theorem \cite[Thm.~26.1]{Anderson-Fuller}, that can be found in \cite[Lem.~2.2]{Cortes-Izurdiaga-Poveda} implies that every object $T$ in $\theta$ admits
a decomposition
\[
T\cong\bigoplus_{i\in I}T_i,
\]
where every $T_i$ is a $\lambda$-presented $A$-module. This follows from the fact that every object in $\theta$ is a direct summand of a coproduct of modules in $\mathcal{S}$, which are $\lambda$-presented. Since $\theta$
is chosen to be saturated, it is closed under direct summands, and therefore every
$T_i$ belongs to $\theta$.\,

Let $\mathcal{S}_{\theta}$ be a set of representatives of the
$\lambda$-presented $A$-modules which belong to $\theta$. The
preceding decomposition, together with the closure of $\theta$ under
coproducts and direct summands, gives
\[
\theta=\operatorname{Add}(\mathcal{S}_{\theta}).
\]

Let for brevity $\mathcal{E}=\lMode{Q,A}{\theta}$ denote the exact category $\lMod{Q,A}$ endowed with the objectwise $\theta$-exact structure, and consider the set
\[
\mathscr{G}_0
:=
\{\,\Sq{q}(T)\mid T\in\mathcal{S}_{\theta},\ q\in Q\,\}.
\]
We claim that the cotorsion pair
$(\mathcal{C}_{\theta},\mathcal{W}_{\theta})$ is generated by
$\mathscr{G}_0$. Our goal is to show that this cotorsion pair induces a combinatorial model structure.\,

Since $\lMode{A}{\theta}$ is efficient, it has exact coproducts, see \cite[Lem.~1.4]{SaorinStovicek}.
Consequently, by \cite[Prop.~4.17]{SV_Q_shaped_homotopy}, the exact category
$\mathcal{E}$ also has exact coproducts and enough projective objects. It follows
that, for every family $\{X_i\}_{i\in I}$ of objects in $\mathcal{E}$
and every object $Y$ in $\mathcal{E}$, there is a natural isomorphism
\begin{equation}\label{eq:ext-iso}
\Ext{\mathcal{E}}{1}{\bigoplus_{i\in I}X_i}{Y}
\cong
\prod_{i\in I}\Ext{\mathcal{E}}{1}{X_i}{Y}.
\end{equation}
Indeed, one can choose projective presentations of the objects $X_i$
and take their coproduct. Exactness of coproducts implies that the
resulting sequence is again a projective presentation.

The functors $\Sq{q}$ preserve coproducts and direct summands. Since
$\theta=\operatorname{Add}(\mathcal{S}_{\theta})$, one has
\[
\mathscr{G}_{\theta}
=
\{\,\Sq{q}(T)\mid T\in\theta,\ q\in Q\,\}
\subseteq
\operatorname{Add}(\mathscr{G}_0).
\]
The reverse inclusion
$\mathscr{G}_0\subseteq\mathscr{G}_{\theta}$ is immediate. The Ext-isomorphism \eqref{ext-iso} therefore gives $\mathscr{G}_{\theta}^{\perp}=\mathscr{G}_0^{\perp}$.
By \stpref{HJ+theta}, the cotorsion pair
$(\mathcal{C}_{\theta},\mathcal{W}_{\theta})$ is generated by
$\mathscr{G}_{\theta}$. Hence
\[
\mathcal{W}_{\theta}
=
\mathscr{G}_{\theta}^{\perp}
=
\mathscr{G}_0^{\perp},
\]
so the same cotorsion pair is generated by the set $\mathscr{G}_0$.\,

We next show that $\mathcal{E}$ has a set of projective generators. Consider the set
\[
\mathscr{P}_0
:=
\{\,\Fq{q}(T)\mid T\in\mathcal{S}_{\theta},\ q\in Q\,\}.
\]
The functors $\Fq{q}$ preserve coproducts and direct summands.
Therefore, by \cite[Prop.~4.17]{SV_Q_shaped_homotopy},
\[
\Prj{\mathcal{E}}
=
\operatorname{Add}
\{\,\Fq{q}(T)\mid T\in\theta,\ q\in Q\,\}
=
\operatorname{Add}(\mathscr{P}_0).
\]

For every $G\in\mathscr{G}_0$, choose a conflation
\begin{equation}\label{eq:G_0-conflation}
0\longrightarrow K_G
\xrightarrow{i_G}
P_G
\xrightarrow{\pi_G} G
\longrightarrow 0
\end{equation}
with $P_G$ projective. Notice that $\pi_G$ is, in particular, a projective precover of $G$: every morphism from a projective object to $G$ lifts through $\pi_G$. Define two sets of morphisms
\[
J
:=
\{\,0\longrightarrow P\mid P\in\mathscr{P}_0\,\}
\quad\text{and}\quad
I
:=
J\cup
\{\,i_G\colon K_G\longrightarrow P_G
\mid G\in\mathscr{G}_0\,\}.
\]

Denote by $J^{\square}$ the class of all morphisms that have the right lifting property with respect to the morphisms in $J$. We claim that $J^\square$ is precisely the class of
deflations in $\mathcal E$. Indeed, a morphism
$p\colon X\longrightarrow Y$ has the right lifting property with respect to $0\longrightarrow P$ if and only if
\[
\Hom{\mathcal E}{P}{X}
   \longrightarrow
\Hom{\mathcal E}{P}{Y}
\]
is surjective. Hence every deflation belongs to
$J^\square$, since every object of $\mathcal P_0$ is
projective.

Conversely, let $p\colon X\longrightarrow Y$ belong to
$J^\square$. Since
$\Prj{\mathcal E}=\operatorname{Add}(\mathcal P_0)$, the preceding surjectivity also holds for every projective object $P$ of $\mathcal E$. Choose a deflation $e\colon P'\longrightarrow Y$ with $P'$ projective. There is therefore
a morphism $h\colon P'\longrightarrow X$ such that $ph=e$. Since the underlying additive category $\lMod{Q,A}$ is abelian, the morphism $p$ admits a kernel. As $ph=e$ is a deflation, the obscure axiom \cite[Prop.~2.16]{Buhler} implies that $p$ is a deflation. Thus
\[
J^\square=\{\text{deflations in }\mathcal E\}.
\]

Let now $p\colon X\twoheadrightarrow Y$ be a deflation, put
$L=\Ker{p}$, and denote the kernel inflation by
$j\colon L\longrightarrow X$. Fix $G\in\mathscr{G}_0$ and consider
the chosen conflation from equation \eqref{G_0-conflation},
where $P_G$ is projective. 

Applying $\Hom{\mathcal E}{-}{L}$ to this conflation gives the exact
sequence
\[
\Hom{\mathcal E}{P_G}{L}
\xrightarrow{i_G^{*}}
\Hom{\mathcal E}{K_G}{L}
\xrightarrow{\delta}
\Ext{\mathcal E}{1}{G}{L}
\longrightarrow
\Ext{\mathcal E}{1}{P_G}{L}
=0,
\]
see, for example, \cite[Sec.~12]{Buhler}. Here
$i_G^{*}(d)=di_G$. Consequently,
\begin{equation}\label{eq:ext_property}
\Ext{\mathcal E}{1}{G}{L}=0
\quad\Longleftrightarrow\quad
\text{every morphism }s\colon K_G\longrightarrow L
\text{ extends via $i_G$ to a morphism }d\colon P_G\longrightarrow L.
\end{equation}

However, the extension property in \eqref{ext_property} is equivalent to $p$ having the right lifting property with respect to $i_G$, or in symbols: 
$p\in\{i_G\}^\square$. Moreover, since $J^\square$ is the class of deflations and
$I=J\cup\{\,i_G\mid G\in\mathscr{G}_0\,\}$, it follows that
\[
I^\square
=
\left\{\,p\colon X\twoheadrightarrow Y
\ \middle|\
\Ext{\mathcal E}{1}{G}{\Ker{p}}=0
\text{ for every }G\in\mathscr{G}_0
\,\right\}
=
\left\{\,p\colon X\twoheadrightarrow Y
\ \middle|\
\Ker{p}\in\mathcal{W}_{\theta}
\,\right\}.
\]

The morphisms in $J^\square$ are therefore 
the fibrations of the projective model structure, while the morphisms in $I^\square$ are its trivial
fibrations. Hence the model structure is cofibrantly generated by the sets $I$ and $J$.

The exact structure does not change the underlying additive category of $\mathcal{E}$, which is the locally presentable category $\lMod{Q,A}$. Thus, the model structure is combinatorial. It is also stable, since it is a hereditary exact model structure. Rosicky's well-generation Theorem \cite{Rosicky_Brown_rep_in_hmtp} for stable combinatorial model categories now implies that
\[
\QST{Q}{A}
=
\operatorname{Ho}(\lMode{Q,A}{\theta})
\]
is a well generated triangulated category. This proves the first assertion and, when $Q=Q^{\text{cpx}}$, the implication
(c)$\Rightarrow$(a).

Assume now that $Q=Q^{\text{cpx}}$. The implication
(a)$\Rightarrow$(b) is immediate. To prove (b)$\Rightarrow$(c), we
argue by contraposition. Suppose that $\theta\nsubseteq\lPPrj{A}^{\kappa}$ for every regular cardinal $\kappa$. We will show that $\mathbf{K}^{\theta}(A)$ admits no generating set of objects. For this purpose, let
$\mathcal{G}\subset\mathbf{K}^\theta(A)$ be an arbitrary set of
objects. We will construct a non zero object $X$ in
$\mathbf{K}^\theta(A)$ such that
\[
\Hom{\mathbf{K}^\theta(A)}{G}{X[n]}=0
\]
for every $G\in\mathcal{G}$ and $n\in\mathbb{Z}$. For each $G'\in\mathcal{G}$, choose a cofibrant representative
$(G')^{\mathrm{cof}}$ with respect to the exact model structure
induced by $\theta$, and consider also the set of $A$-modules
\[
\mathcal{S}_{\mathcal{G}}
:=
\{\,((G')^{\mathrm{cof}})^n\in\lMod{A}
\mid G'\in\mathcal{G},\ n\in\mathbb{Z}\,\},
\]
where by $(-)^n$ we denote the degree $n$ term of a complex. Since $\lPPrj{A}$ is additively generated by the set $\lsmod{A}$ of
all finitely presented $A$-modules, one has that the class
\[
\mathcal{A}
:=
\operatorname{Add}
(\lsmod{A}\,\cup\,\mathcal{S}_{\mathcal{G}})
\]
is also additively generated by a set. \,

Recall that, $\theta$
is not Add-set-bounded, that is
$\theta\nsubseteq\operatorname{Add}(\mathcal{S})$ for any set
$\mathcal{S}$. In particular, $\theta\nsubseteq\mathcal{A}$, i.e.\
there exists a module $M$ in $\theta$ such that
$M\notin\mathcal{A}$. Let
\[
P_{\text{pure}}
=
\bigoplus_{\substack{f\in\Hom{A}{F}{M}\\ F\in\lsmod{A}}}
F
\longrightarrow M
\]
be a pure-projective precover of $M$. We extend this precover
by forcing also all maps from $\mathcal{S}_{\mathcal{G}}$ to lift.
Define the module
\[
P
=
P_{\text{pure}}
\oplus
\bigoplus_{\substack{f\in\Hom{A}{S}{M}\\
S\in\mathcal{S}_{\mathcal{G}}}}
S
\]
and define a map $\pi\colon P\longrightarrow M$ such that, on the
first summand $P_{\text{pure}}$, it identifies with the map
$P_{\text{pure}}\to M$, and on the second summand it is identified
with the map $f\colon S\to M$.

One has that $\pi$ is a pure-epimorphism, since
$P_{\text{pure}}\to M$ is a pure-epimorphism. Indeed, let
$F\in\lsmod{A}$ and consider a map $g\colon F\to M$. Notice that $g$
corresponds to a summand $F_g$ of $P_{\text{pure}}$, and consider the
natural inclusion $i_g\colon F_g\to P_{\text{pure}}$. By definition
of $\pi$, one has $\pi\circ i_g=g$. Thus, every map $F\to M$ lifts
through $\pi$, hence for every $F\in\lsmod{A}$ the map
\[
\Hom{A}{F}{\pi}\colon
\Hom{A}{F}{P}
\longrightarrow
\Hom{A}{F}{M}
\]
is surjective and $\pi$ is a pure-epimorphism. Using similar arguments
one can show that every map $S\to M$ with
$S\in\mathcal{S}_{\mathcal{G}}$ also lifts through $\pi$. We complete
$\pi$ into a pure exact sequence of $A$-modules
\[
\xi\colon
0\longrightarrow K
\stackrel{i}{\longrightarrow}P
\stackrel{\pi}{\longrightarrow}M
\longrightarrow0
\]
with $K=\Ker{\pi}$. We claim that $\xi$ is not $\theta$-exact.
Suppose for a contradiction that it is. Since $M\in\theta$ but
$M\notin\mathcal{A}$, applying the functor $\Hom{A}{M}{-}$ to $\xi$
would give an exact sequence
\[
0\longrightarrow
\Hom{A}{M}{K}
\longrightarrow
\Hom{A}{M}{P}
\longrightarrow
\Hom{A}{M}{M}
\longrightarrow0.
\]
In particular, the epimorphism
\[
\Hom{A}{M}{P}\longrightarrow\Hom{A}{M}{M}\longrightarrow0
\]
implies that the identity $\operatorname{id}_M$ lifts through $\pi$,
meaning that $M$ is a direct summand of $P$. However, by construction,
$P$ lies in $\mathcal{A}$ and thus so must $M$. This is a
contradiction, so $\xi$ is not $\theta$-exact. \,

Consider now the three-term complex
\[
\Delta(\xi)
=
(\cdots\longrightarrow0\longrightarrow K\longrightarrow P
\longrightarrow M\longrightarrow0\longrightarrow\cdots)
\]
in the category $\mathbf{Ch}(A)$ of complexes of $A$-modules, where
$M$ sits in degree zero. We denote this complex by $X=\Delta(\xi)$.
Since $\xi$ is pure exact, $X$ is a pure-acyclic complex. On the other
hand, since $\xi$ is not $\theta$-exact, $X$ is not a
$\theta$-trivial object. Thus, $X$ is non zero in
$\mathbf{K}^{\theta}(A)$. \,

It remains to show that
\[
\Hom{\mathbf{K}^{\theta}(A)}{G}{X[n]}=0
\]
for every $G\in\mathcal{G}$ and $n\in\mathbb{Z}$. Let
$G'\in\mathcal{G}$ and write $G=(G')^{\mathrm{cof}}$ for the chosen cofibrant representative of $G'$. Since $G$ is cofibrant and every object is fibrant, a morphism
$G\to X[n]$ in $\mathbf{K}^{\theta}(A)$ can be represented by a chain
map
\[
f\colon G\longrightarrow\Delta(\xi)[n].
\]
Suppose first that $n=0$. Then $f$ has in degree zero an $A$-linear map $f^0\colon G^0\longrightarrow M.$ Since $G^0\in\mathcal{S}_{\mathcal{G}}$, the map $f^0$ lifts through
$\pi\colon P\to M$. Choose a lift $\widetilde{f}^{\,0}\colon G^0\to P$ such that
$\pi\widetilde{f}^{\,0}=f^0$. Equivalently, by forming the pullback
of $\xi$ along $f^0$,
\begin{equation}\label{eq:pullback}
\begin{tikzcd}
0 & K & {P_f} & {G^0} & 0 \\
0 & K & P & M & 0
\arrow[from=1-1, to=1-2]
\arrow[from=1-2, to=1-3]
\arrow[equals, from=1-2, to=2-2]
\arrow[dashed, from=1-3, to=1-4]
\arrow[dashed, from=1-3, to=2-3]
\arrow["\lrcorner"{anchor=center, pos=0.125}, draw=none,
       from=1-3, to=2-4]
\arrow[from=1-4, to=1-5]
\arrow["{f^0}", from=1-4, to=2-4]
\arrow[from=2-1, to=2-2]
\arrow[from=2-2, to=2-3]
\arrow[from=2-3, to=2-4]
\arrow[from=2-4, to=2-5]
\end{tikzcd}
\end{equation}
one gets that the upper short exact sequence splits in $\lMod{A}$.
We denote this split exact sequence by $\xi_f$ and its three-term
complex by $\Delta(\xi_f)$. Since $\xi_f$ splits, the complex
$\Delta(\xi_f)$ is contractible. \,

We claim that the map
$f\colon G\longrightarrow\Delta(\xi)$ factors through
$\Delta(\xi_f)$. The only non zero terms of $f$ are in degrees
$-2,-1$ and $0$, or diagrammatically,
\[
\begin{tikzcd}
\cdots & {G^{-3}} & {G^{-2}} & {G^{-1}} & {G^0} & {G^1} & \cdots \\
\cdots & 0 & K & P & M & 0 & \cdots
\arrow[from=1-1, to=1-2]
\arrow[from=1-2, to=1-3]
\arrow[from=1-2, to=2-2]
\arrow["{d_G^{-2}}", from=1-3, to=1-4]
\arrow["{f^{-2}}"', from=1-3, to=2-3]
\arrow["{d_G^{-1}}", from=1-4, to=1-5]
\arrow["{f^{-1}}"', from=1-4, to=2-4]
\arrow[from=1-5, to=1-6]
\arrow["{f^0}"', from=1-5, to=2-5]
\arrow[from=1-6, to=1-7]
\arrow[from=1-6, to=2-6]
\arrow[from=2-1, to=2-2]
\arrow[from=2-2, to=2-3]
\arrow["i"', from=2-3, to=2-4]
\arrow["\pi"', from=2-4, to=2-5]
\arrow[from=2-5, to=2-6]
\arrow[from=2-6, to=2-7]
\end{tikzcd}
\]
By the commutativity of the chain map, one has
\begin{equation}\label{eq:commutativity}
i\circ f^{-2}
=
f^{-1}\circ d_G^{-2}
\quad\text{and}\quad
\pi\circ f^{-1}
=
f^0\circ d_G^{-1}.
\end{equation}

The pullback $P_f=P\times_MG^0$ is taken in the category of
$A$-modules, and it can be described as
\[
P_f
=
\{\,(p,x)\in P\times G^0\mid\pi(p)=f^0(x)\,\}.
\]
The maps in $\xi_f$ are given by
\[
\alpha\colon K\longrightarrow P_f,
\qquad
k\longmapsto(i(k),0),
\]
and
\[
\beta\colon P_f\longrightarrow G^0,
\qquad
(p,x)\longmapsto x.
\]
Define a chain map
\[
\widetilde{f}\colon G\longrightarrow\Delta(\xi_f)
\]
whose non zero terms are
\[
\widetilde{f}^{\,0}=\operatorname{id}_{G^0},
\qquad
\widetilde{f}^{\,-2}=f^{-2},
\]
and
\[
\widetilde{f}^{\,-1}\colon G^{-1}\longrightarrow P_f,
\qquad
x\longmapsto
\bigl(f^{-1}(x),d_G^{-1}(x)\bigr).
\]
This map is well-defined by the second equality in
\eqref{commutativity}. In degree $-1$, one has
\[
\beta\widetilde{f}^{\,-1}(x)
=
\beta\bigl(f^{-1}(x),d_G^{-1}(x)\bigr)
=
d_G^{-1}(x)
=
\widetilde{f}^{\,0}d_G^{-1}(x).
\]
In degree $-2$, one has
\[
\widetilde{f}^{\,-1}d_G^{-2}(y)
=
\bigl(
f^{-1}d_G^{-2}(y),
d_G^{-1}d_G^{-2}(y)
\bigr)
=
\bigl(i f^{-2}(y),0\bigr)
=
\alpha\widetilde{f}^{\,-2}(y).
\]
Thus, $\widetilde{f}$ is a chain map. Finally, the composite
\[
G
\stackrel{\widetilde{f}}{\longrightarrow}
\Delta(\xi_f)
\longrightarrow
\Delta(\xi)
\]
has terms
\[
(\ldots,0,f^{-2},f^{-1},f^0,0,\ldots),
\]
and therefore coincides with $f$. Since $\Delta(\xi_f)$ is
contractible, it is $\theta$-trivial. Thus, $f$ represents the zero
morphism in $\mathbf{K}^{\theta}(A)$, and hence
\(
\Hom{\mathbf{K}^{\theta}(A)}{G}{X}=0.
\)
Applying the same argument after shifting $G$ gives
\(
\Hom{\mathbf{K}^{\theta}(A)}{G}{X[n]}=0
\)
for every $n\in\mathbb{Z}$, since all the components of every shift
of $G$ belong to $\mathcal{S}_{\mathcal{G}}$. We have therefore constructed a non zero object
$X\in\mathbf{K}^{\theta}(A)$ such that
\[
\Hom{\mathbf{K}^{\theta}(A)}{G}{X[n]}=0
\]
for every $G\in\mathcal{G}$ and every $n\in\mathbb{Z}$. Hence
$\mathcal{G}$ is not a generating set. Since $\mathcal{G}$ was
arbitrary, $\mathbf{K}^{\theta}(A)$ admits no generating set of objects.\,

This proves the contrapositive of (b)$\Rightarrow$(c). Together with (c)$\Rightarrow$(a) and (a)$\Rightarrow$(b), this completes the proof.
\end{proof}

\begin{rmk}
In our proof we use Rosicky's theorem for our stable combinatorial model structure. Notice that Rosicky's theorem is formulated for \emph{strongly} $\lambda$-combinatorial model structures, for some regular cardinal $\lambda$. We do not claim that it is strongly $\kappa$-combinatorial for the regular cardinal $\kappa$ fixed above. By \cite[Rmk.~3.4(1)]{Rosicky_Brown_rep_in_hmtp}, every combinatorial model category is strongly $\mu$-combinatorial for some sufficiently
large regular cardinal $\mu$. Indeed, the functorial
cofibration--trivial-fibration factorisation is accessible, and the Uniformisation Theorem \cite[Thm.~2.19]{AdamekRosicky} (originally due to Makkai and Par\'e \cite{Makkai_Pare_accessible_cats}) allows one to choose $\mu$ so that this factorisation preserves $\mu$-presentable objects. Enlarging $\mu$ if necessary, we may also assume that $\lMod{Q,A}$ is locally $\mu$-presentable and that the domains and codomains of the morphisms in $I$ and $J$ are $\mu$-presentable. Thus, the model structure is
strongly $\mu$-combinatorial. It thus follows from \cite[Thm.~4.9]{Rosicky_Brown_rep_in_hmtp} that
$\operatorname{Ho}(\mathcal E)=\QST{Q}{A}$ is well
$\mu$-generated, and hence it is well generated in our sense.
\end{rmk}

\subsection{Applications}
\label{subsec:applications-generation}

As a first application, in the special case that $\theta=\lMod{A}$, one recovers a known result of Saor{\'\i}n and {\v{S}}{\v{t}}ov{\'\i}{\v{c}}ek  \cite[Cor.~4.10]{SaorinStovicek}, where the same condition as in Theorem \ref{thm:negative_compact_generation} is used to give a criterion for the homotopy category of complexes to be well generated.

We conclude this section by providing an example of an exact structure that differs from the ones we have seen so far, and such that, the class of relative projective modules will satisfy
\[
 \lPPrj{A}\subsetneq\theta\subsetneq\lMod{A}.
\]
This exact structure is given by finitely split exact sequences, whose associated homotopy category is compactly generated in the case of ordinary complexes and well generated in the general $Q$-shaped setting.  

The relative model structure on complexes used below is an instance of those studied by Christensen and Hovey in \cite{MR1912401}. In particular, their Theorem~5.1 in \emph{loc. cit.} characterises when such a model structure is cofibrantly generated and when its homotopy category admits a set of weak generators. Our purpose here is to verify the hypotheses of Setup \ref{stp:HJ+theta} for the exact structures in question and to express the resulting generation properties through the criterion of Theorem \ref{thm:negative_compact_generation}.

An epimorphism $p\colon M\longrightarrow N$ of $A$-modules is called \textbf{finitely split} if every morphism $F\longrightarrow N$ with $F$ finitely generated lifts through $p$. This notion was introduced by Azumaya in \cite{Azumaya_finite_splitness} and is also studied in \cite{Mao_finite_splitness_phantom_maps}. We consider the exact structure whose conflations are the short exact sequences
\[
 0\longrightarrow L\longrightarrow M\stackrel{p}{\longrightarrow}N\longrightarrow0
\]
with $p$ finitely split. Notice that the construction of this exact structure is equivalent to the functor $\Hom{A}{F}{-}$ to be exact for every finitely generated $A$-module $F$. These conflations are pure exact, since finitely presented modules are finitely generated. Over a non-noetherian ring, however, the two exact structures need not coincide.

The motivation for this choice is that finitely generated modules may have infinitely many relations. Testing with such modules retains information which can be lost when one tests only with finitely presented modules. The associated relative homotopy category provides a derived setting for this stronger lifting condition.  For what follows, assume that $\Bbbk$, $Q$ and $A$ satisfy the ring and category assumptions of Setup \ref{stp:HJ+theta}.

\begin{lem}\label{lem:applications-fg-setup}
Let $\mathcal S$ be a set of finitely generated $A$-modules containing $A$, and put $\theta=\operatorname{Add}_A(\mathcal S)$. Then $\theta$ is saturated and precovering, and $\lMode{A}{\theta}$ is efficient.  Consequently $\mathbf K_Q^{\theta}(A)$ is well generated. For $Q=Q^{\mathrm{cpx}}$, it is compactly generated by the stalks $S_q(F)$ with $F\in\mathcal S$ and $q\in\mathbb Z$.
\end{lem}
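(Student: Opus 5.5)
We establish the four assertions in order: saturation, precovering, efficiency, and the two generation statements. Throughout, $\theta=\operatorname{Add}_A(\mathcal S)$ with $A\in\mathcal S$.

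\emph{Saturation.} By \cite[Lem.~4.16]{SV_Q_shaped_homotopy}, $\operatorname{Sat}(\mathcal S)=\operatorname{Add}(\{A\}\cup\mathcal S)=\operatorname{Add}(\mathcal S)=\theta$. Since saturation does not change the exact structure, $\lMode{A}{\theta}=\lMode{A}{\mathcal S}$, and the relative projectives are exactly $\theta$. Hence $\theta$ is saturated.

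\emph{Precovering.} For a module $M$, take the canonical map
\[
\bigoplus_{S\in\mathcal S}\ \bigoplus_{f\in\Hom{A}{S}{M}}S\longrightarrow M,
\]
exactly as for the pure-projective precover in the proof of Theorem~\ref{thm:negative_compact_generation}. Every map from an object of $\mathcal S$ lifts by construction. Maps from coproducts and direct summands of objects of $\mathcal S$ then lift by the universal property of the coproduct and by composing with the retraction. So this is a $\theta$-precover, and it is surjective because $A\in\mathcal S$.

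\emph{Efficiency.} This is the step needing the most care, since \cite[Prop.~4.17]{SV_Q_shaped_homotopy} is stated for saturations of sets of finitely \emph{presented} modules. I would first check whether that proof only uses two facts:
\begin{itemize}
\item $\Hom{A}{S}{-}$ commutes with the relevant directed unions of inflations;
\item $\mathcal S$ is a set.
\end{itemize}
For finitely generated $S$, $\Hom{A}{S}{-}$ commutes with directed unions of submodules, though not with arbitrary directed colimits. Transfinite compositions of inflations are directed unions of monomorphisms, so this should suffice. Concretely, I would show that a transfinite composition of $\theta$-inflations $X_0\to X_1\to\cdots$ with colimit $X_\mu$ has $X_0\to X_\mu$ a $\theta$-inflation. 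The argument goes as follows:
\begin{enumerate}
\item Apply $\Hom{A}{S}{-}$ with $S\in\mathcal S$ to the cokernels.
\item A map $S\to X_\mu/X_0$ from a finitely generated $S$ lands in some $X_\alpha/X_0$, since the image is finitely generated and the union is directed.
\item It therefore lifts to $X_\alpha$, and hence to $X_\mu$.
\end{enumerate}
This gives $\theta$-exactness of $0\to X_0\to X_\mu\to X_\mu/X_0\to0$. Smallness of objects relative to inflations is automatic in the locally presentable category $\lMod{A}$, and enough projectives was shown above. So the criterion \cite[Prop.~2.7]{SaorinStovicek} applies and $\lMode{A}{\theta}$ is efficient.

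\emph{Completeness and well generation.} For item (3) of Setup~\ref{stp:HJ+theta}, I would argue that $\mathscr G_\theta^\perp=\mathscr G_0^\perp$ for the set $\mathscr G_0=\{S_q(S)\mid S\in\mathcal S,\ q\in Q\}$. This uses the Ext isomorphism from the proof of Theorem~\ref{thm:negative_compact_generation}. Since $\lMode{Q,A}{\theta}$ is efficient, the set $\mathscr G_0$ generates a complete cotorsion pair by \cite[Prop.~3.3]{SV_Q_shaped_homotopy}. Thus Setup~\ref{stp:HJ+theta} holds. Well generation then follows from Theorem~\ref{thm:negative_compact_generation} applied to the set $\mathcal S$.

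\emph{Compact generation for $Q=Q^{\mathrm{cpx}}$.} I would imitate Proposition~\ref{prp:comp_gen_q_homotopy}, using $\Hom{Q,A}{S_q(F)}{-}\cong\Hom{A}{F}{K_q(-)}$. The functor $K_q$ preserves coproducts, and $\Hom{A}{F}{-}$ commutes with coproducts for finitely generated $F$, since a map from $F$ lands in a finite subsum. So $S_q(F)$ is compact in the exact category. Lemma~\ref{lem:compacts} only uses this coproduct compatibility, not the full finite presentation, so its proof goes through. Generation is then verbatim from Theorem~\ref{thm:compact_generation}, using that stalks need no syzygies. The subtle point to check is that the Hom description of $\mathcal{W}_\theta$ (cohomology of $\Hom{A}{F}{X}$, via \cite[Thm.~5.2]{SV_Q_shaped_homotopy}) remains available here.
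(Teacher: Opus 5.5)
Your proposal is correct. Saturation, the precover, efficiency and well generation follow the paper closely. Two differences there: the paper proves saturation directly, by noting that the canonical precover $p_P$ of a relative projective $P$ splits. It also gets item (3) of Setup~\ref{stp:HJ+theta} by citing \cite[Lem.~6.7]{SV_Q_shaped_homotopy}, where you reduce to the set $\mathscr G_0$ via the Ext--product isomorphism.

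Two small things to tighten. First, you use efficiency of $\lMode{Q,A}{\theta}$ without justification; the paper transfers it from $\lMode{A}{\theta}$ by \cite[Prop.~4.7]{SV_Q_shaped_homotopy}. Second, your lift of $S\to X_\alpha/X_0$ to $X_\alpha$ presupposes that $X_0\to X_\alpha$ is already a $\theta$-inflation, so the efficiency argument is a transfinite induction and should be stated as one.

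The genuine difference is in the final assertion. The paper uses Christensen--Hovey: stalks are cofibrant and maps out of them are chain maps modulo homotopy. Hence $\Hom{\mathbf K^\theta(A)}{S_q(F)}{X}\cong H^q\Hom{A}{F}{X}$, and vanishing of these groups for all $F,q$ is exactly relative acyclicity of $X$. You instead stay inside the paper's own machinery. The functor $\Hom{Q,A}{S_q(F)}{-}\cong\Hom{A}{F}{K_q(-)}$ commutes with coproducts for finitely generated $F$, and that is all the proof of Lemma~\ref{lem:compacts} uses. Generation then follows as in Theorem~\ref{thm:compact_generation}, from $\Hom{\operatorname{Ho}}{S_q(F)}{Y}\cong\Ext{\lMode{Q,A}{\theta}}{1}{S_q(F)}{\mathbf c(\Sigma^{-1}Y)}$ together with $\mathcal W_\theta=\mathscr G_0^\perp$. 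So the cohomological description of $\mathcal W_\theta$ that you flag as a subtle point is not needed.

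The paper's route gives an explicit formula for the Hom groups, but it is tied to complexes and relies on external results. Yours is more economical. Since none of its ingredients uses $Q=Q^{\mathrm{cpx}}$, it shows that $\mathbf K_Q^\theta(A)$ is compactly generated by $\mathscr G_0$ for every $Q$ in the setup. That is stronger than the well generation the lemma asserts for general $Q$.
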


\begin{proof}
For an $A$-module $M$, consider the evaluation morphism
\[
 p_M\colon
 \bigoplus_{F\in\mathcal S}F^{(\Hom{A}{F}{M})}
 \longrightarrow M.
\]
Since $A\in\mathcal S$, this is an epimorphism. Every morphism from an object of $\mathcal S$ to $M$ lifts through $p_M$, and the same holds for coproducts and direct summands of such objects. Thus $p_M$ is a $\theta$-precover and a deflation in $\lMode{A}{\theta}$. The objects of $\theta$ are projective in this exact category. Conversely, if $P$ is projective, then $p_P$ splits, showing that $P\in\theta$. This proves saturation and the existence of enough projectives.

We verify efficiency in the sense of \cite[Def.~2.6]{SaorinStovicek}. Let $(M_\alpha)_{\alpha\leq\lambda}$ be a continuous ordinal-indexed chain whose successor maps are inflations. Its colimit exist in $\lMod{A}$, and all its maps are monomorphisms. We show by transfinite induction that $M_0\longrightarrow M_\alpha$ is an inflation. The successor step follows from composition of inflations. At a limit ordinal $\beta$, one has
\[
 M_\beta/M_0=\bigcup_{\alpha<\beta}M_\alpha/M_0.
\]
For $F\in\mathcal S$, every morphism $F\longrightarrow M_\beta/M_0$ has finitely generated image, and hence factors through $M_\alpha/M_0$ for some $\alpha<\beta$. By the induction hypothesis it lifts to $M_\alpha$, and therefore to $M_\beta$. It follows that $M_0\longrightarrow M_\beta$ is an inflation. The same argument applies to every tail of the chain, that is, for every $\gamma\le\beta$ the map $M_\gamma\to M_\beta$ is an inflation. So arbitrary transfinite compositions of inflations are inflations.

Every $A$-module is small relative to monomorphisms, and hence relative to the inflations under consideration. Since the exact category has enough projectives, \cite[Prop.~2.7]{SaorinStovicek} now proves that $\lMode{A}{\theta}$ is efficient. In particular, it has exact coproducts.

By \cite[Prop.~4.7]{SV_Q_shaped_homotopy}, the exact category $\lMode{Q,A}{\theta}$ is also efficient and has enough projectives. The class $\theta$ is the saturation of the set $\mathcal S$, so by \cite[Lem.~6.7]{SV_Q_shaped_homotopy} one has that the stalk class $\mathscr G_\theta$ generates a complete cotorsion pair. This verifies all the conditions of Setup \ref{stp:HJ+theta}. Well generation follows from Lemma \ref{lem:add_set_bounded} and Theorem \ref{thm:negative_compact_generation}.

For the last assertion, work with cochain complexes and let $S_q(F)$ denote the stalk in degree $q$. These stalks are cofibrant, every object is fibrant, and morphisms from a cofibrant object are computed up to chain homotopy; see \cite[Lem.~2.7 and Cor.~2.13]{MR1912401}. Consequently, for every complex $X$, there is an isomorphism
\[
 \Hom{\mathbf K^\theta(A)}{S_q(F)}{X}
 \cong H^q\Hom{A}{F}{X}.
\]
The functor $\Hom{A}{F}{-}$ commutes with coproducts because $F$ is finitely generated. Coproducts in the homotopy category are represented by degreewise coproducts: coproducts preserve the relative weak equivalences, and coproducts of cofibrant replacements are cofibrant replacements. Thus $S_q(F)$ is compact. Vanishing of the displayed groups for all $F\in\mathcal S$ and $q\in\mathbb Z$ says exactly that $X$ is trivial in the relative model structure. Hence these stalks compactly generate $\mathbf K^\theta(A)$.
\end{proof}

\begin{exa}\label{exa:applications-fg}
Let $\Bbbk=\overline{\mathbb F}_2$, let $G=\bigoplus_{i\geq1}C_2$, and put $A=\Bbbk G$. If $g_i$ is the generator of the $i$-th summand and $x_i=g_i-1$, then
\[
 A\cong\Bbbk[x_1,x_2,\ldots]/(x_1^2,x_2^2,\ldots).
\]
Let $\mathcal S_{\mathrm{fg}}$ be a set of representatives of the finitely generated $A$-modules, containing $A$, and set
\[
 \theta_{\mathrm{fg}}=\operatorname{Add}_A(\mathcal S_{\mathrm{fg}}).
\]
Such a set exists because every finitely generated module is a quotient of $A^n$ for some $n\geq0$. The $\theta_{\mathrm{fg}}$-exact structure is precisely the finitely split exact structure. By Lemma \ref{lem:applications-fg-setup}, it satisfies Setup \ref{stp:HJ+theta}; its $Q$-shaped homotopy category is well generated, and its homotopy category of ordinary complexes is compactly generated.

We claim that
\[
 \lPPrj{A}\subsetneq\theta_{\mathrm{fg}}\subsetneq\lMod{A}.
\]
The first inclusion follows because every finitely presented module is finitely generated. It is strict: the trivial $A$-module $\Bbbk=A/(x_1,x_2,\ldots)$ is cyclic but not finitely presented. Indeed, if $I=(x_1,x_2,\ldots)$, then $I/I^2$ has the infinite basis given by the classes of the $x_i$, so $I$ is not finitely generated. A finitely generated pure-projective module is a direct summand of a finite coproduct of finitely presented modules (see \cite{Pure-proj-modules} or the more general construction in Section \ref{subsec:Q-shaped-pure}), and is therefore finitely presented. Thus $\Bbbk\in\theta_{\mathrm{fg}}\setminus\lPPrj{A}$.

For the second inclusion, if $\lMod{A}=\operatorname{Add}_A(\mathcal S_{\mathrm{fg}})$, then $A$ would be pure semisimple by \cite[Prop.~2.2]{MR2657376}. Every $A$-module would then be pure-projective, contradicting the preceding paragraph. This proves the claim. Notice also that $A$ is countable. Every finitely generated $A$-module is therefore countably presented, giving the explicit bound 
\(
 \theta_{\mathrm{fg}}\subseteq\lPPrj{A}^{\aleph_1}.
\)
\end{exa}

\section{Perfect objects}\label{sec:perfect}

In compactly generated triangulated categories the compact objects are described as the thick closure of the compact generators. In applications, however, one wants representatives of such objects that are finitely presented in the underlying additive category, and easier to work with. A classical example is that of the derived category $\mathbf{D}(A)$ of a ring $A$. The compact objects are the thick closure of the ring $A$, however, in this case they can be represented also as bounded complex of finitely generated projective $A$-modules, up to homotopy. These objects are also known as \emph{perfect complexes}. In particular, there is an equivalence, $\mathbf{D}^c(A)\,\cong\,\mathbf{K}^b(\lprj{A})=:\operatorname{Perf}(A)$. An extension of this result can be found in \cite{HJ-TAMS}, where the authors define a $Q$-shaped analogue of perfect complexes and show that every perfect object is compact. The converse implication remains still an open question. Holm and J{\o}rgensen, in \emph{loc. cit.} formulated this as the following conjecture. 

\begin{res*}[Conjecture {\cite{HJ-TAMS}}]
    There is an equality of triangulated subcategories of the $Q$-shaped derived category $\QSD{Q}{A}$ of a ring $A$,
    \[    \mathbf{D}^{\operatorname{perf}}_Q(A)\,=\,\mathbf{D}^c_Q(A).
    \]
\end{res*}

In this section, we define perfect objects in abstract homotopy categories obtained by \cite[Thm.~3.4]{SV_Q_shaped_homotopy} and show that such objects are compact, while also providing a sufficient condition for the converse to hold. As an application, we generalise the results of \cite{HJ-TAMS} for compactly generated $Q$-shaped $\theta$-homotopy categories $\QST{Q}{A}$, and prove the conjecture of Holm and J{\o}rgensen for artin algebras $A$ over a commutative hereditary and noetherian ground ring $\Bbbk$.

\textbf{Notation.} For what follows, $\mathcal{E}$ will denote efficient exact category with enough projectives whose underlying additive category is locally finitely presented. For a fixed \emph{set} $\mathscr{G}$ of finitely presented Gorenstein projective objects, denote by $(\mathcal{C}_\mathscr{G},\mathcal{W}_\mathscr{G})$ generated by the set $\mathcal{S}_\mathscr{G}$ as in \cite[Thm.~3.4]{SV_Q_shaped_homotopy}. We also denote by $q\colon \mathcal{E}\to\operatorname{Ho}(\mathcal{E})$ the quotient functor to the homotopy category. Recall by Theorem \ref{thm:compact_generation}, that the homotopy category $\operatorname{Ho}(\mathcal{E})$ is compactly generated by the set $q(\mathcal{S}_\mathscr{G})$. Finally, we denote by $\mathscr{F}:=\mathcal{C}_\mathscr{G}\cap\operatorname{fp}(\mathcal{E})$, the full subcategory of the finitely presented cofibrant objects, and by $\operatorname{prj}(\mathcal{E}):=\Prj{\mathcal{E}}\cap\operatorname{fp}(\mathcal{E})$, the set of all finitely presented projective objects in the exact category $\mathcal{E}$.

An object $X$ in $\operatorname{Ho}(\mathcal{E})$ is called \textbf{perfect} if it is isomorphic in $\operatorname{Ho}(\mathcal{E})$ to the image of an object in $\mathscr F$. In symbols, if $X\,\cong\, qM$ with $M$ in $\mathscr{F}$. We denote the isomorphism closure in $\operatorname{Ho}(\mathcal{E})$ of all perfect objects by $\operatorname{Perf}(\mathcal{E})$. Note that the subcategory of perfect objects coincides with the image of $\mathscr{F}$ in the homotopy category.

\begin{lem}\label{lem:perf=compacts}
    One always has $\operatorname{Perf}(\mathcal{E})\subseteq\operatorname{Ho}(\mathcal{E})^c$ and $\operatorname{Thick}(\operatorname{Perf}(\mathcal{E}))=\operatorname{Ho}(\mathcal{E})^c$.
\end{lem}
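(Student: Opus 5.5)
The first inclusion is a direct application of Lemma~\ref{lem:compacts}. If $X\in\operatorname{Perf}(\mathcal{E})$, then $X\cong qM$ for some $M\in\mathscr{F}=\mathcal{C}_\mathscr{G}\cap\operatorname{fp}(\mathcal{E})$. Thus $M$ is a finitely presented cofibrant object. By Lemma~\ref{lem:compacts} its image $qM$ is compact in $\operatorname{Ho}(\mathcal{E})$, and compactness is invariant under isomorphism. Hence $\operatorname{Perf}(\mathcal{E})\subseteq\operatorname{Ho}(\mathcal{E})^c$.

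For the equality $\operatorname{Thick}(\operatorname{Perf}(\mathcal{E}))=\operatorname{Ho}(\mathcal{E})^c$ I will prove both inclusions. The compact objects of any triangulated category with coproducts form a thick subcategory: they are closed under shifts, cones (by the five lemma applied to $\Hom{}{-}{\coprod X_i}$) and direct summands. Since $\operatorname{Perf}(\mathcal{E})\subseteq\operatorname{Ho}(\mathcal{E})^c$, this gives $\operatorname{Thick}(\operatorname{Perf}(\mathcal{E}))\subseteq\operatorname{Ho}(\mathcal{E})^c$. For the reverse inclusion, Theorem~\ref{thm:compact_generation} gives $\operatorname{Ho}(\mathcal{E})^c=\operatorname{Thick}(q\mathcal{S}_\mathscr{G})$. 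It therefore suffices to show $q\mathcal{S}_\mathscr{G}\subseteq\operatorname{Thick}(\operatorname{Perf}(\mathcal{E}))$, and then take thick closures.

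To prove this I will show that $q(\sigma^nG)\in\operatorname{Thick}(\operatorname{Perf}(\mathcal{E}))$ for each $G\in\mathscr{G}$ and $n\in\mathbb{Z}$. First, $G$ itself lies in $\mathscr{F}$. It is finitely presented by hypothesis, and it is cofibrant because $G=\sigma^0G\in\mathcal{S}_\mathscr{G}\subseteq{}^\perp(\mathcal{S}_\mathscr{G}^\perp)=\mathcal{C}_\mathscr{G}$. So $qG\in\operatorname{Perf}(\mathcal{E})$. As in the proof of Theorem~\ref{thm:compact_generation}, the conflations $Z^nP\rightarrowtail P^n\twoheadrightarrow Z^{n+1}P$ with $P^n$ projective give $q(\sigma^nG)\cong\Sigma^nqG$ in $\operatorname{Ho}(\mathcal{E})$. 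Since thick subcategories are closed under all shifts, $q(\sigma^nG)\in\operatorname{Thick}(\operatorname{Perf}(\mathcal{E}))$.

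The only delicate point is that the syzygies $\sigma^nG$ need not be finitely presented, so they need not lie in $\mathscr{F}$ themselves. The shift isomorphism above sidesteps this: we only need the suspensions of $qG$, which lie in the thick closure of $\operatorname{Perf}(\mathcal{E})$ even though $\operatorname{Perf}(\mathcal{E})$ is not yet known to be shift-closed. I would double-check the sign and index conventions in the identification $q(\sigma^nG)\cong\Sigma^{\pm n}qG$. The argument does not depend on the sign, though, since thick subcategories are closed under $\Sigma^{\pm1}$.
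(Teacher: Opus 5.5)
Your proof is correct and follows the paper's argument: Lemma~\ref{lem:compacts} gives the inclusion, and Theorem~\ref{thm:compact_generation} identifies $\operatorname{Ho}(\mathcal{E})^c$ with $\operatorname{Thick}(q\mathcal{S}_\mathscr{G})$. You differ at one step, and your version is the sound one. The paper asserts $\mathcal{S}_\mathscr{G}\subseteq\mathscr{F}$ ``by definition'', although the syzygies $\sigma^nG$ need not be finitely presented (as the paper itself remarks in the proof of Theorem~\ref{thm:compact_generation}). Your use of $q(\sigma^nG)\cong\Sigma^nqG$, which needs only $G\in\mathscr{F}$, closes that gap. You also make explicit that the compact objects form a thick subcategory, which the paper leaves implicit.
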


\begin{proof}
    Let $M$ be an object in $\mathscr{F}$. Since $M$ is finitely presented, its image $qM$ in the homotopy category $\operatorname{Ho}(\mathcal{E})$ is compact by Lemma \ref{lem:compacts}. This proves the desired inclusion. Moreover, since $\mathcal{S}_\mathscr{G}\subseteq\mathscr F$ by definition, it follows that $\operatorname{Thick}(q(\mathcal{S}_\mathscr{G}))\subseteq\operatorname{Thick}(q\mathscr F)$. In particular, one has by Theorem \ref{thm:compact_generation},
    $$\operatorname{Thick}(\operatorname{Perf}(\mathcal{E}))=\operatorname{Ho}(\mathcal{E})^c=\operatorname{Thick}(q(\mathcal{S}_\mathscr{G})). $$
    This proves the equality. 
\end{proof}

Recall that a category $\mathcal{C}$ is called idempotent complete if every idempotent splits.
For any category $\mathcal{C}$ there exists an idempotent complete category $\operatorname{Kar}(\mathcal{C})$ called the \textbf{idempotent completion} of $\mathcal{C}$, and a fully faithful functor $\mathcal{C}\to\operatorname{Kar}(\mathcal{C})$, which is universal; see, for example, \cite[Sec.~6]{Buhler}. We denote the idempotent completion of a category $\mathcal{C}$ by $\operatorname{Kar}(\mathcal{C})$ instead of $\mathcal{C}^{\wedge}$ (as in \emph{loc. cit.}),  since idempotent complete categories are also commonly referred to as \textbf{Karoubian categories} and idempotent completions as Karoubian envelopes.

\begin{thm}\label{thm:compact=perfect}
    Assume $\mathcal{E}$ to be a locally coherent exact category satisfying the Setup \ref{stp:loc_coh}, and $\operatorname{fp}(\mathcal{E})$ to have enough projectives. Then, $\mathscr{F}$ is an idempotent complete Frobenius category whose projective-injective objects are $\operatorname{prj}(\mathcal{E})$. Its stable category $\underline{\mathscr{F}}$ embeds fully faithfully and exactly in the homotopy category $\operatorname{Ho}(\mathcal{E})$, with image $\operatorname{Perf}(\mathcal{E})$ and 
    \[
    \operatorname{Kar}(\operatorname{Perf}(\mathcal{E}))\,=\, \operatorname{Ho}(\mathcal{E})^c.
    \]
    Consequently, $\operatorname{Perf}(\mathcal{E})=\operatorname{Ho}(\mathcal{E})^c$, if and only if, $\underline{\mathscr{F}}$ is idempotent complete. In particular, if $\operatorname{fp}(\mathcal{E})$ is a Krull-Schmidt category, then 
    \[
    \operatorname{Perf}(\mathcal{E})=\operatorname{Ho}(\mathcal{E})^c.
    \]
\end{thm}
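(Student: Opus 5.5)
I will prove the statement in four steps: the Frobenius structure on $\mathscr F$, the embedding of $\underline{\mathscr F}$, the Karoubi identification, and the Krull--Schmidt consequence.

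\emph{Step 1: $\mathscr F$ is an idempotent complete Frobenius category.} First, $\mathscr F$ is extension closed in $\mathcal E$. Indeed, $\mathcal C_\mathscr{G}$ is extension closed as a left Ext-orthogonal class. Moreover, in a locally coherent exact category $\operatorname{fp}(\mathcal E)$ is closed under extensions, since a conflation with finitely presented end terms is a directed colimit of finitely presented conflations and so splits off one of them. Hence $\mathscr F$ inherits an exact structure. It is idempotent complete because $\mathcal C_\mathscr{G}$ and $\operatorname{fp}(\mathcal E)$ are both closed under direct summands, and $\mathcal E$ is weakly idempotent complete and locally finitely presented, so idempotents split in $\mathcal E$.

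For enough projectives, take $M\in\mathscr F$. Since $\operatorname{fp}(\mathcal E)$ has enough projectives, there is a conflation $K\rightarrowtail P\twoheadrightarrow M$ with $P\in\operatorname{prj}(\mathcal E)$ and $K$ finitely presented; here I use local coherence to say that kernels of deflations between finitely presented objects are finitely presented. Hereditarity of $(\mathcal C_\mathscr{G},\mathcal W_\mathscr{G})$ gives $K\in\mathcal C_\mathscr{G}$, so $K\in\mathscr F$. The objects of $\operatorname{prj}(\mathcal E)$ are projective-injective in $\mathscr F$, because they are projective-injective in the Frobenius category $\mathcal C_\mathscr{G}$.

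For enough injectives, I would use the Frobenius structure of $\mathcal C_\mathscr{G}$ to embed $M\rightarrowtail P'\twoheadrightarrow M'$ with $P'$ projective, possibly not finitely presented. Since $M$ is finitely presented, the inflation $M\to P'$ factors through a finitely presented projective summand; that summand is obtained by writing $P'$ as a directed colimit of objects of $\operatorname{prj}(\mathcal E)$, which is possible because projectives in a locally coherent category with enough fp projectives are directed colimits of fp projectives. I then need this factored map to still be an inflation with cokernel in $\mathscr F$. Here I would use local coherence: the conflation is a directed colimit of fp conflations, and for a large enough index these agree with $M$ in the first term.

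\textbf{This is the step I expect to be the main obstacle}: getting $M'$ cofibrant and the middle term fp projective. If the direct argument fails, I would pass to a totally acyclic complex argument, or alternatively conclude that every projective of $\mathscr F$ is injective, using that $\operatorname{Ext}^1(-,P)$ for $P\in\operatorname{prj}(\mathcal E)$ vanishes on $\mathcal C_\mathscr{G}$ since $P\in\mathcal W_\mathscr{G}$. Once this is settled, the projectives of $\mathscr F$ coincide with $\operatorname{prj}(\mathcal E)$ up to summands, and they are exactly $\operatorname{prj}(\mathcal E)$ because that class is summand closed.

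\emph{Step 2: $\underline{\mathscr F}\to\operatorname{Ho}(\mathcal E)\simeq\underline{\mathcal C}_\mathscr{G}$ is fully faithful and exact.} The functor is induced by the inclusion $\mathscr F\subseteq\mathcal C_\mathscr{G}$, and it is exact because the conflations and projective-injectives are compatible. Fullness is clear. For faithfulness, suppose a map $M\to N$ between objects of $\mathscr F$ factors through some projective $P$. Since $M$ is finitely presented and $P$ is a directed colimit of objects of $\operatorname{prj}(\mathcal E)$, the map $M\to P$ factors through some $P_i\in\operatorname{prj}(\mathcal E)$. The essential image of the functor is $\operatorname{Perf}(\mathcal E)$ by definition.

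\emph{Step 3: $\operatorname{Kar}(\operatorname{Perf}(\mathcal E))=\operatorname{Ho}(\mathcal E)^c$.} By Lemma~\ref{lem:perf=compacts}, $\operatorname{Perf}(\mathcal E)$ is a triangulated subcategory of $\operatorname{Ho}(\mathcal E)^c$, being the image of an exact functor from a triangulated category closed under isomorphism, and its thick closure is $\operatorname{Ho}(\mathcal E)^c$. The thick closure of a triangulated subcategory consists of the direct summands of its objects; this is standard, for example via Neeman's or Balmer--Schlichting's description. Since $\operatorname{Ho}(\mathcal E)^c$ is idempotent complete, being thick in a category with coproducts, it is therefore the idempotent completion of $\operatorname{Perf}(\mathcal E)$. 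Consequently $\operatorname{Perf}(\mathcal E)=\operatorname{Ho}(\mathcal E)^c$ if and only if $\operatorname{Perf}(\mathcal E)\simeq\underline{\mathscr F}$ is idempotent complete.

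\emph{Step 4: the Krull--Schmidt case.} If $\operatorname{fp}(\mathcal E)$ is Krull--Schmidt, then so is its summand-closed subcategory $\mathscr F$. The stable category $\underline{\mathscr F}$ is then Krull--Schmidt, since quotients of local rings are local or zero. Krull--Schmidt categories are idempotent complete, and Step 3 gives the equality.
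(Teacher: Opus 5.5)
The gap is the one you flagged yourself: you never construct, for $M\in\mathscr F$, a conflation $M\rightarrowtail P_0\twoheadrightarrow M'$ with $P_0\in\operatorname{prj}(\mathcal E)$ and $M'\in\mathscr F$.

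Your sketch factors the inflation $M\to P'$, where $P'$ is your possibly infinite projective, through a term $P_i$ of a directed system of finitely presented projectives with colimit $P'$. Such a $P_i$ is not a summand of $P'$, because the structure map $P_i\to P'$ need not split. So nothing forces $M\to P_i$ to be an inflation, let alone one with cofibrant and finitely presented cokernel.

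The appeal to local coherence does not repair this. If you write the conflation as a directed colimit of finitely presented conflations, $M$ is only a retract of their first terms, and their middle terms need not be projective.

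Your fallback is true but insufficient. Objects of $\operatorname{prj}(\mathcal E)$ are indeed injective in $\mathscr F$, since they lie in $\mathcal W_{\mathscr G}$, but this gives only one inclusion and says nothing about $\mathscr F$ having enough injectives.

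This is not a side issue. Without those conflations, $\mathscr F$ is not shown to be Frobenius, $\underline{\mathscr F}$ is not shown to be triangulated, and $\operatorname{Perf}(\mathcal E)$ is not shown to be closed under $\Sigma$. So the claim in your Step 3 that $\operatorname{Perf}(\mathcal E)$ is a triangulated subcategory is unsupported, and that claim is exactly what makes its thick closure equal to its closure under summands.

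The missing idea is to make the big projective a genuine coproduct, so that finite subcoproducts are split summands, and then use that cofibrations are closed under retracts.

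\begin{itemize}
\item Take a $\mathcal C_{\mathscr G}$-conflation $M\stackrel{i}{\rightarrowtail} I\twoheadrightarrow C$ with $I$ projective.
\item Using $\Prj{\mathcal E}=\operatorname{Add}(\operatorname{prj}(\mathcal E))$, choose $I'$ with $I\oplus I'=P=\bigoplus_{j\in J}P_j$ and each $P_j\in\operatorname{prj}(\mathcal E)$.
\item Then $g=(i,0)\colon M\to P$ is an inflation with cofibrant cokernel $C\oplus I'$, that is, a cofibration.
\item Since $M$ is finitely presented, $g=\beta\alpha$ with $\alpha\colon M\to P_{J_0}=\bigoplus_{j\in J_0}P_j$ for some finite $J_0\subseteq J$. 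Here $\beta$ is the split inclusion and $\gamma$ the canonical projection.
\item From $\gamma g=\alpha$ and $\gamma\beta=\operatorname{id}_{P_{J_0}}$, the squares $(\operatorname{id}_M,\beta)$ and $(\operatorname{id}_M,\gamma)$ exhibit $\alpha$ as a retract of $g$ in the arrow category. Hence $\alpha$ is a cofibration, i.e.\ an inflation with cofibrant cokernel $C'$.
\item Finally, $C'$ is finitely presented: $\Hom{\mathcal E}{C'}{-}$ is the kernel of $\Hom{\mathcal E}{P_{J_0}}{-}\to\Hom{\mathcal E}{M}{-}$, and filtered colimits of abelian groups are exact.
\end{itemize}

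With this step in place, your Steps 2--4 coincide with the paper's proof. Your faithfulness argument in Step 2, which factors a map from $M$ into an arbitrary projective through a finitely presented one, is actually more explicit than the paper. The paper only checks that $\underline{\mathscr F}\to\underline{\mathcal C}_{\mathscr G}$ is well defined.
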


\begin{proof}
    Notice that since $\operatorname{fp}(\mathcal{E})$ is assumed to have enough projectives, the projectives in $\mathcal{E}$ are the additive closure of the projectives in $\operatorname{fp}(\mathcal{E})$, that is, $\Prj{\mathcal{E}}=\operatorname{Add}(\operatorname{prj}(\mathcal{E}))$, see \cite[Prop.~2.13]{schlegel2025exact}.

    Our aim is to show that $\mathscr{F}$ is a Frobenius exact category with projective-injective objects $\operatorname{prj}(\mathcal{E})$. The exact structure of $\mathscr F$ is the inherited one from $\mathcal{E}$ since $\mathscr{F}$ is an extension closed subcategory. Indeed, the subcategory $\mathcal{C}_\mathscr G$ is extension closed as the left class of a cotorsion pair and $\operatorname{fp}(\mathcal{E})$ is extension closed since $\mathcal{E}$ is locally coherent, see \cite[Prop.~1.4]{Positselski_loc_coh} for $\kappa=\aleph_0$. It is also idempotent complete since both $\mathcal{C}_\mathscr{G}$ is closed under retracts and $\operatorname{fp}(\mathcal{E})$ is idempotent complete by \cite[Prop.~1.16~\&~Obs.~2.4]{AdamekRosicky} (see also \cite[p.4]{Positselski_loc_coh} for the same result in notation/terminology more consistent with ours). 

    To prove that $\mathscr{F}$ has enough projectives, note that since $\operatorname{fp}(\mathcal{E})$ has enough projectives, for any $M$ in $\mathscr{F}$ there exists a conflation in $\operatorname{fp}(\mathcal{E})$
    \[
    0\longrightarrow K\longrightarrow P\longrightarrow M\longrightarrow 0.
    \]
    All the objects in the conflation above are cofibrant. Indeed, $M$ is cofibrant since it lies in $\mathscr{F}$, $P$ is cofibrant since every finitely presented projective object is projective also in $\mathcal{E}$ and the projectives are trivial cofibrant objects in our model structure, and finally, the kernel $K$ is cofibrant since $\mathcal{C}_\mathscr{G}$ is resolving, and thus closed under kernels of deflations.

    We now show that $\mathscr{F}$ has enough injectives as well. We want to prove that an arbitrary object $M$ in $\mathscr{F}$ fits into a conflation
    \begin{equation}\label{eq:enough_F-inj}
    0\longrightarrow M\longrightarrow I \longrightarrow M'\longrightarrow 0,
    \end{equation}
    with $I$ in $\operatorname{prj}(\mathcal{E})\subset\mathscr{F}$, and cokernel $M'$ in $\mathscr{F}$.
    Let $\xi\colon0\to M\stackrel{i}{\to} I\to C\to 0$ be a conflation in the Frobenius exact category $\mathcal{C}_\mathscr{G}$, with $I\in \Prj{\mathcal{E}}$. Since $\Prj{\mathcal{E}}=\operatorname{Add}(\operatorname{prj}(\mathcal{E}))$, there exists a complement $I'$ such that 
    \[
    I\oplus I'=\bigoplus_{j\in J}P_j=:P,\quad\text{with } \,P_j\in\operatorname{prj}(\mathcal{E}).
    \]
    Consider now the conflation (the direct sum of $\xi$ with the conflation $0\to0\to I'\to I'\to0$)
    \[
    0\longrightarrow M\stackrel{g}{\longrightarrow} P\longrightarrow C\oplus I'\longrightarrow0,
    \]
    with $g=(i,0)$. Its cokernel $C\oplus I'$ is cofibrant, thus $g$ is a cofibration. The (infinite) coproduct $P$ is a filtered colimit of its finite subcoprodcuts, see \cite[Exa.~4.6]{adamek2010algebraic}. Let us denote the filtered system of all finite subcoproducts by $\Lambda$. Since $M$ is finitely presented, there is an isomorphism
    \[
    \Hom{\mathcal{E}}{M}{P}\,\cong\, \varinjlim_{S\in \Lambda}\Hom{\mathcal{E}}{M}{P_S}.
    \]
    In particular, every morphism $g\colon M\to P$ can be factored through a finite coproduct (indexed by a finite set $J_0\in \Lambda$) as follows:
    \[\begin{tikzcd}
	M & {P':=\bigoplus_{j\in J_0}P'_j} & P \\
	& {C':=\operatorname{Cok}(\alpha)}
	\arrow["\alpha", from=1-1, to=1-2]
	\arrow["\beta", shift left=1, from=1-2, to=1-3]
	\arrow[two heads, from=1-2, to=2-2]
	\arrow["\gamma", shift left=1, from=1-3, to=1-2]
\end{tikzcd},\]
where $\gamma$ is the canonical projection from the infinite coproduct $P$ to the finite coproduct $P'$. Here one has, $g=\beta\alpha$, $\gamma\beta=\operatorname{id}_{P'}$, and $\gamma g=\alpha$. Denote also by $C'$ the cokernel of $\alpha\colon M\to P'$. In the arrow category $\operatorname{Mor}(\mathcal{E})$, whose objects are morphisms in $\mathcal{E}$ and the morphisms are commutative squares, the morphism $\alpha\xrightarrow{(\operatorname{id}_M,\,\beta)}g\xrightarrow{(\operatorname{id}_M,\,\gamma)}\alpha$, has a composite $\operatorname{id}_\alpha$. Notice that the squares in the morphisms above are commutative because, $\beta\alpha=g=g\operatorname{id}_M$ and $\gamma g=\alpha=\alpha\operatorname{id}_M$. It follows that $\alpha$ is a retract of a cofibration, thus a cofibration, which in our exact model structure means that it is an inflation with a cofibrant cokernel. Hence, $C'$ lies in $\mathcal{C}_\mathscr{G}$. Since $P'$ is a finite coproduct of finitely presented projective objects it lies in $\operatorname{prj}(\mathcal{E})\subset\mathscr{F}$. It remains to show that $C'$ is finitely presented, then we have successfully constructed a conflation like in \eqref{enough_F-inj}, with $I=P'$ and $M'=C'$.

Apply the functor $\Hom{\mathcal{E}}{-}{X}$, with $X$ in $\mathcal{E}$, to the (a priori $\mathcal{C}_\mathscr{G}$-)conflation $0\to M\to P'\to C'\to0$. This yields an exact sequence of abelian groups
\[
0\longrightarrow\Hom{\mathcal{E}}{C'}{X}\longrightarrow\Hom{\mathcal{E}}{P'}{X}\longrightarrow\Hom{\mathcal{E}}{M}{X}.
\]
Filtered colimits of abelian groups are exact and the last two Hom functors preserve filtered colimits since $P'$ and $M$ are finitely presented. Diagrammatically, for a filtered diagram $\{X_a\}_{a\in A}$, such that $\varinjlim X_a=X$, we have
\[\begin{tikzcd}
	0 & {\Hom{\mathcal{E}}{C'}{\varinjlim X_a}} & {\Hom{\mathcal{E}}{P'}{\varinjlim X_a}} & {\Hom{\mathcal{E}}{M}{\varinjlim X_a}} \\
	0 & {\varinjlim \Hom{\mathcal{E}}{C'}{X_a}} & {\varinjlim \Hom{\mathcal{E}}{P'}{X_a}} & {\varinjlim \Hom{\mathcal{E}}{M}{X_a}}
	\arrow[from=1-1, to=1-2]
	\arrow[from=1-2, to=1-3]
	\arrow["\cong", dashed, from=1-2, to=2-2]
	\arrow[from=1-3, to=1-4]
	\arrow["\cong"{description}, from=1-3, to=2-3]
	\arrow["\cong"{description}, from=1-4, to=2-4]
	\arrow[from=2-1, to=2-2]
	\arrow[from=2-2, to=2-3]
	\arrow[from=2-3, to=2-4]
\end{tikzcd}\]
We conclude that $\varinjlim\Hom{\mathcal{E}}{C'}{X_a}\,\cong\,\Hom{\mathcal{E}}{C'}{\varinjlim X_a}$. Thus $C'$ is finitely presented, and we constructed an $\mathscr{F}$-conflation as in \eqref{enough_F-inj}. This proves that $\mathscr{F}$ has enough injectives.

Note that in the construction of the projective and injective approximation sequences in $\mathscr F$, we used as projectives and injectives objects from the class $\operatorname{prj}(\mathcal{E})$. Thus,  the projective and injective objects of $\mathscr{F}$ coincide and $\mathscr{F}$ is a Frobenius exact category.

The inclusion functor $\mathscr{F}\longrightarrow\mathcal{C}_\mathscr{G}$ induces an exact stable functor (since a map factoring through a cofibrant finitely presented projective object factors also through a cofibrant projective object) $\underline{\mathscr F}\longrightarrow\underline{\mathcal{C}}_\mathscr G$. The image of $\mathscr{F}$ through this functor is triangulated, coincides with $\operatorname{Perf}(\mathcal{E})$ (by definition) and by Lemma \ref{lem:perf=compacts}, $\operatorname{Thick}(\underline{\mathscr{F}})=\operatorname{Ho}(\mathcal{E})^c$. The homotopy category has coproducts since $\mathcal{E}$ is efficient (see \cite[proof~of~Cor.~3.12]{SaorinStovicek}) and consequently it has split idempotents by \cite[Prop.~1.6.8]{Nee}, thus the idempotent completion $\operatorname{Kar}(\operatorname{Perf}(\mathcal{E}))$ of $\operatorname{Perf}(\mathcal{E})$ coincides with $\operatorname{Thick}(\operatorname{Perf}(\mathcal{E}))$ (see \cite[Prop.~1.3]{MR1027750}), and is a triangulated category, with the unique triangulated structure described in \cite[Thm.~1.5]{Balmer_idempotent_complition}. We conclude that $\operatorname{Kar}(\operatorname{Perf}(\mathcal{E}))=\operatorname{Ho}(\mathcal{E})^c$.

Finally, assume that $\operatorname{fp}(\mathcal{E})$ is a Krull-Schmidt category. Then, since $\mathscr{F}$ is closed under summands, every object $F$ in $\mathscr{F}$ inherits a finite decomposition
\[
F=\bigoplus_{i=1}^n U_i,\quad\text{with }\, U_i\in\mathscr{F},
\]
and every $U_i$ has a local endomorphism ring, let us denote it by $R_i$. Passing to the stable category, $\underline{\mathscr{F}}$, the surviving non-zero summands of $F$, also have local stable endomorphism rings. Indeed, denoting by $\mathfrak{I}$ the ideal of the morphisms factoring through a projective-injective object in $\mathscr{F}$, one has the stable endomorphism rings:
\[
\underline{R_i}=\operatorname{End}_{\mathscr{F}}(U_i)/\mathfrak{I}(U_i,U_i).
\]
If the stable endomorphism ring ring $\underline{R_i}$ is zero, then $U_i$ becomes zero in the stable category. Alternatively, $\underline{R_i}$ is a non-zero quotient of a local ring and hence it is a local ring as well; see \cite[Exc.~19.1(2)]{Lam1}. Thus, $\underline{\mathscr{F}}$ is a Krull-Schmidt category and, by \cite[Cor.~4.4]{Krause_KS} it has split idempotents. We conclude that in this case, $\operatorname{Perf}(\mathcal{E})=\operatorname{Ho}(\mathcal{E})^c$.
\end{proof}

In \cite{HJ-TAMS}, Holm and J{\o}rgensen define an object $X$ in $\lMod{Q,A}$ to be \textbf{strictly perfect}, if it has finite support and takes values in finitely presented\footnote{In fact, they ask for such functors to take values in finitely \emph{generated} projective modules, however, since we mainly work with finitely presented objects, and for any ring the finitely generated projective modules are precisely the finitely presented projective modules, we will write that our functors take values in finitely presented projective modules.} projective $A$-modules. The class of all strictly perfect objects is denoted by ${}_{Q,A}\mathcal{K}$. Note, that every strictly perfect object is a finitely presented object of the locally finitely presented category $\lMod{Q,A}$, see \cite[Prop.~5.9]{HJ-TAMS}; in fact, these objects are of type $\operatorname{FP}_\infty$. An object $X$ in $\QSD{Q}{A}$ is called \textbf{perfect} if it has a strictly perfect cofibrant replacement in the projective model structure on $\lMod{Q,A}$. Denote by
\[
\mathbf{D}^\text{perf}_Q(A)=
     \left\{ X \in \QSD{Q}{A} \;\left|\mspace{-5mu} 
    \begin{array}{l}
    \text{There is an isomorphism $X\cong K$ in $\QSD{Q}{A}$} \\
    \text{for some object $K\in{}^\perp\mathbb{E}\cap{}_{Q,A}\mathcal{K}$}
    \end{array}
    \mspace{-8mu}
    \right.
    \right\} ,
\]
the category of perfect objects in $\QSD{Q}{A}$, where ${}^\perp\mathbb{E}$ denotes the subcategory of cofibrant objects with respect to the projective model structure on $\lMod{Q,A}$.

For what follows, assume that $\varrho$ is a set of finitely presented $A$-modules, and denote its saturation by $\theta$, that is, $\theta=\operatorname{Add}(\varrho)$. We equip $\lMod{Q,A}$ with the objectwise $\theta$-exact structure. Notice that $\theta$ satisfies the Setup \ref{stp:HJ+theta} and the resulting homotopy category is compactly generated by Proposition \ref{prp:comp_gen_q_homotopy}. Consistent with the terminology of Holm and J{\o}rgensen, we define an object $X$ in $\lMode{Q,A}{\theta}$ to be \textbf{$\theta$-strictly perfect} if it has finite support and it takes values in $\operatorname{add}(\varrho)=\operatorname{Add}(\theta)\cap\lsmod{A}$. In symbols,
\[
{}_{Q,A}\mathcal{K}^\theta:=\{X\in\lMode{Q,A}{\theta}\,|\, \supp{X}<\infty,\text{ and } X(q)\in\operatorname{add}(\varrho),\text{ for every }q\in Q\}.
\]
Note that it is important for the functors to take values in $\operatorname{add}(\varrho)$ instead of $\theta=\operatorname{Add}(\varrho)$. For example, in the case of ordinary complexes, if one allows infinite direct sums, the object $\bigoplus_{i\in I}A$ is no longer compact. We show that $\theta$-strictly perfect objects are precisely the objects in $\mathscr{F}=\mathcal{C}_\theta\cap\operatorname{fp}(\lMod{Q,A})$. 

\begin{lem}
    Assume that $Q$ has no cycles. The following hold.
    \begin{rqm} 
    \item The finitely presented objects of $\lMod{Q,A}$ are given by
    \[
\operatorname{fp}(\lMod{Q,A})=\{X\in\lMod{Q,A}\,|\,\supp{X}<\infty\text{ and } X(q)\in\lsmod{A},\text{ for every } q\in Q\};
\]
    \item One has $\mathscr F={}_{Q,A}\mathcal{K}^\theta$.
    \end{rqm}
\end{lem}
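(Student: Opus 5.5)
Part (1) is a two-sided comparison in the locally finitely presented Grothendieck category $\lMod{Q,A}$; part (2) combines (1) with Theorem~\ref{thm:bifibrant_description} and a stalk filtration. The main tool throughout is acyclicity of $Q$: on a finite set $\Sigma\subseteq Q$, the relation ``$\mathfrak r(p,p')\neq0$'' generates a strict partial order. So $\Sigma$ has a vertex $q$ with $\mathfrak r(q,p)=0$ for all $p\in\Sigma\setminus\{q\}$, and each $q$ has a finite height (length of longest $\mathfrak r$-path inside $\mathsf N_+(q)$).

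\textbf{Part (1), inclusion $\subseteq$.} The objects $F_q(M)$ with $q\in Q$ and $M\in\lsmod{A}$ form a set of finitely presented generators, since $\Hom{Q,A}{F_q(M)}{-}\cong\Hom{A}{M}{E_q(-)}$ and $E_q$ commutes with filtered colimits. Hence every finitely presented object is a cokernel of a map between finite coproducts of such $F_q(M)$. Each $F_q(M)$ has support in the finite set $\mathsf N_+(q)$. Its values $Q(q,p)\otimes_\Bbbk M$ are summands of $M^d$, because $Q(q,p)$ is finitely generated projective over $\Bbbk$, and are therefore finitely presented. Cokernels are computed objectwise, so the inclusion follows.

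\textbf{Part (1), inclusion $\supseteq$.} I argue in two steps, using that $\operatorname{fp}(\lMod{Q,A})$ is closed under cokernels and extensions.
\begin{rqm}
\item Stalks are finitely presented: I show $S_q(M)\in\operatorname{fp}$ for $M\in\lsmod{A}$ by induction on the height of $q$. The epimorphism $F_q(M)\to S_q(M)$ has kernel $\mathfrak r(q,-)\otimes_\Bbbk M$. This kernel has finite support strictly above $q$, finitely presented values, and, by acyclicity, support in vertices of smaller height. The filtration in step (2) together with the induction hypothesis shows the kernel is finitely presented, and then so is the cokernel $S_q(M)$.
\item Filtration by stalks: given $X$ with finite support and finitely presented values, pick $q\in\supp{X}$ with no nonzero radical maps from $q$ into the rest of the support. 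Then $S_q(X(q))\hookrightarrow X$ is a subobject, and the quotient has strictly smaller support. Induction on $|\supp{X}|$ finishes, and the two inductions are run simultaneously.
\end{rqm}

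\textbf{Part (2), $\mathscr F\subseteq{}_{Q,A}\mathcal K^\theta$.} Let $X\in\mathscr F$. Part (1) gives finite support and $X(q)\in\lsmod{A}$. The cofibrant half of Theorem~\ref{thm:bifibrant_description}, which needs only the projective model, gives $X(q)\in\theta=\operatorname{Add}(\varrho)$. A finitely presented summand of $\bigoplus_i M_i$ with $M_i\in\varrho$ factors through a finite subsum, so it is a summand of that subsum and lies in $\operatorname{add}(\varrho)$.

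\textbf{Part (2), ${}_{Q,A}\mathcal K^\theta\subseteq\mathscr F$.} Let $X\in{}_{Q,A}\mathcal K^\theta$. Part (1) makes $X$ finitely presented. For cofibrancy I reuse the filtration from step (2): each sequence $0\to S_q(X(q))\to X\to X/S_q(X(q))\to0$ is objectwise split, hence a $\theta$-conflation. Each stalk $S_q(M)$ with $M\in\operatorname{add}(\varrho)$ is a summand of a finite sum of members of $\mathscr G_\theta$, so it lies in $\mathcal C_\theta$. Since $\mathcal C_\theta$ is closed under extensions, induction on support size puts $X$ in $\mathcal C_\theta$.

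\textbf{Main obstacle.} I expect the delicate point to be the finite presentation of stalks in step (1). The induction on height must be organised so that the kernel $\mathfrak r(q,-)\otimes M$ only involves vertices of smaller height. If this bookkeeping gets awkward, my fallback is to cite \cite[Prop.~5.9]{HJ-TAMS} after checking that its argument only uses finite support and finitely presented values.
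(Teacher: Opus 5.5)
There is a genuine gap in your step (1) for the inclusion $\supseteq$ of part (1). The induction on height does not work, and re-choosing the height cannot repair it. With your definition the height does not decrease along radical morphisms. Any height that did decrease would have to be infinite in the basic example.

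Take $Q=Q^{\mathrm{cpx}}$, which has no cycles. Here $\mathsf N_+(n)=\{n,n+1\}$, so every vertex has height $1$. Meanwhile the kernel of $F_n(M)\to S_n(M)$ is exactly $S_{n+1}(M)$, supported at a vertex of the same height. Your scheme needs $S_{n+1}(M)$ finitely presented to conclude for $S_n(M)$, which in turn needs $S_{n+2}(M)$, and so on. Since $n\to n+1\to n+2\to\cdots$ is an infinite chain of nonzero radical morphisms, no height function can decrease along it, and the regress never reaches a base case. Your filtration step (2) is fine: acyclicity gives a vertex $q$ of the finite support with $\mathfrak r(q,p)=0$ for all $p\in\supp{X}$, and $S_q(X(q))\hookrightarrow X$ is then an objectwise split subobject.

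The missing observation is that you never need the kernel to be finitely presented, only that the object is a cokernel of a map between finitely presented objects. Any $Y$ with finite support and finitely presented values is a quotient of the finitely presented object $\mathbb F(Y)=\bigoplus_{p\in\supp{Y}}F_p(Y(p))$. The paper applies this directly to $X$ rather than to stalks. In the objectwise split sequence $0\to\mathbb K(X)\to\mathbb F(X)\to X\to0$:
\begin{itemize}
\item the middle term is finitely presented;
\item $\mathbb K(X)$ has support in $\bigcup_{q\in\supp{X}}\mathsf N_+(q)$ and finitely presented values;
\item so $\mathbb F(\mathbb K(X))\to\mathbb K(X)$ is an epimorphism from a finitely presented object;
\item hence $X\cong\Coker{(\mathbb F(\mathbb K(X))\to\mathbb F(X))}$ is finitely presented.
\end{itemize}
This route needs neither the stalk step, nor the filtration, nor acyclicity.

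With part (1) repaired this way, the rest of your proposal is correct and matches the paper. That covers the forward inclusion and the use of the cofibrant half of Theorem~\ref{thm:bifibrant_description} with $\theta\cap\lsmod{A}=\operatorname{add}(\varrho)$. It also covers cofibrancy via the stalk filtration and extension closure of $\mathcal C_\theta$. Your remark that the filtration sequences are objectwise split, hence $\theta$-conflations, fills in a detail the paper leaves implicit.
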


\begin{proof}
    \proofoftag{1} Notice that Hom-finiteness and local boundedness give a characterisation of the finitely presented objects. Indeed, by \cite[Lem.~3.11~\&~Prop.~3.12]{HJ-JLMS}, the objects $F_p(A)$, $q\in Q$ are finitely presented projective generators of $\lMod{Q,A}$. Thus, every finitely presented object admits a presentation by finite sums of those objects. This proves the forward inclusion. For the converse, for an object $X$ with finite support and values in $\lsmod{A}$, we use the canonical objectwise split exact sequence from \cite[Prop.~5.7]{HJ-TAMS}
\begin{equation}\label{eq:splice1}
0\longrightarrow \mathbb{K}(X)\longrightarrow\mathbb{F}(X)\longrightarrow X\longrightarrow0.
\end{equation}
The kernel $\mathbb{K}(X)$ has finite support by \cite[Lem.~5.8(b)]{HJ-TAMS} and takes values in finitely presented $A$-modules, since the short exact sequence \eqref{splice1} splits objectwise. Moreover, by \cite[Lem.~5.8(c)]{HJ-TAMS} $\mathbb{F}(X)$ is finitely presented. Consider now the canonical objectwise conflation
\begin{equation}\label{eq:splice2}
    0\longrightarrow \mathbb{K}(\mathbb{K}(X))\longrightarrow\mathbb{F}(\mathbb{K}(X))\longrightarrow \mathbb{K}(X)\longrightarrow0,
\end{equation}
with $\mathbb{F}(\mathbb{K}(X))$ finitely presented by the same arguments. Splicing the exact sequences \eqref{splice1} and \eqref{splice2}, shows that $X$ is a cokernel of a map $\mathbb{F}(\mathbb{K}(X))\to\mathbb{F}(X)$ of finitely presented objects, and thus it is finitely presented.
Notice, that the description of finitely presented objects is an inherited property of the underlying additive category $\lMod{Q,A}$ and does not depend on the exact structure. 

\proofoftag{2} We now prove that $\mathscr{F}\subseteq{}_{Q,A}\mathcal{K}^\theta$. By Theorem \ref{thm:bifibrant_description} every cofibrant object $X$ takes values in $\lPrj{A}^\theta$ which is precisely the class $\theta$ since $\theta$ is assumed to satisfy Setup \ref{stp:HJ+theta}, and hence it is saturated. 
Putting everything together we get that $\mathscr{F}=\mathcal{C}_\theta\cap\operatorname{fp}(\lMod{Q,A})$ is described as
\[
\{X\in\lMode{Q,A}{\theta}\,|\,X(q)\in \theta,\text{ for every }q\in Q\}\,\cap\,\{X\in\lMod{Q,A}\,|\,\supp{X}<\infty\text{ and } X(q)\in\lsmod{A},\text{ for every } q\in Q\}.
\]
Moreover, since $\theta\cap\lsmod{A}=\operatorname{add}(\varrho)$, we conclude that $\mathscr{F}\subseteq{}_{Q,A}\mathcal{K}^\theta$.

For the converse inclusion, assume that $X$ is $\theta$-strictly perfect, then it is finitely presented. To prove that it is also cofibrant, notice that $X$ admits a finite filtration (since $\supp{X}<\infty$) by stalks $S_q(R)$ with $q\in\supp{X}$ and $R\in \varrho$, by \cite[Lem.~3.5]{Slaftsos_silting}. That is, there is a finite series of monomorphisms
\[
0=X^0\rightarrowtail X^1\rightarrowtail X^2\rightarrowtail\cdots\rightarrowtail X^\ell=X,
\]
such that, $X^{i+1}/X^i\,\cong\, S_{q_i}(R)$. We prove by finite induction that $X$ is cofibrant. For $i=0$, we get that $X^1$ is isomorphic to a stalk, and thus it is cofibrant. Assume now that for some $j<\ell$ the object $X^j$ is cofibrant. By definition, there exists an exact sequence
\[
0\longrightarrow X^j\longrightarrow X^{j+1}\longrightarrow S_{q_j}(R)\longrightarrow0
\]
for some $R$ in $\varrho$, with $X^j$ and $S_{q_j}(R)$ in $\mathcal{C_\theta}$. Since  $\mathcal{C}_\theta$ is closed under extensions, $X^{j+1}$ is also cofibrant. This proves that $X$ is cofibrant, and completes the proof.
\end{proof}

We define an object to be \textbf{$\theta$-perfect} in $\QST{Q}{A}$ if its cofibrant replacement is $\theta$-strictly perfect. We denote the subcategory of $\theta$-perfect objects by $\QST{Q}{A}^{\text{perf}}$. For $\varrho=\{A\}$ one recovers the classical definitions of \cite{HJ-TAMS}.

\begin{thm}\label{thm:hj_conj}
    Assume that $Q$ has no cycles. Let also $\varrho$ be a set of finitely presented $A$-modules, $\theta=\operatorname{Add}(\varrho)$, and assume that $\lMod{Q,A}$ is endowed with the objectwise $\theta$-exact structure. Then,
    \[
    \QST{Q}{A}^{\text{perf}}\subseteq\QST{Q}{A}^c\quad\text{and}\quad \operatorname{Kar}(\QST{Q}{A}^{\text{perf}})=\QST{Q}{A}^c.
    \]
    If moreover, $A$ is an artin algebra over $\Bbbk$, then one has
    \[
    \QST{Q}{A}^{\text{perf}}=\QST{Q}{A}^c.
    \]
\end{thm}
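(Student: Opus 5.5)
The plan is to deduce everything from Theorem~\ref{thm:compact=perfect}, applied to $\mathcal{E}=\lMode{Q,A}{\theta}$ with $\mathscr{G}=\mathscr{G}_\varrho=\{\Sq{q}(M)\mid q\in Q,\ M\in\varrho\}$. We may assume $A\in\varrho$, since this does not change $\theta$.

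\emph{Step 1: Setup~\ref{stp:loc_coh}.} The category $\lMod{Q,A}$ is locally finitely presented, with finitely presented projective generators $F_p(A)$. By \cite[Prop.~4.17]{SV_Q_shaped_homotopy}, $\lMode{Q,A}{\theta}$ is efficient and has enough projectives. Proposition~\ref{prp:comp_gen_q_homotopy} shows that the stalks $\Sq{q}(M)$, $M\in\varrho$, are finitely presented Gorenstein projectives generating the projective model.

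\emph{Step 2: local coherence.} The objectwise $\theta$-exact structure should be locally coherent in Positselski's sense. Every $\theta$-conflation is objectwise $\operatorname{add}(\varrho)$-exact, since the test objects in $\varrho$ are finitely presented, so $\theta$-exactness is preserved by filtered colimits. Conversely, we need to write a $\theta$-conflation as a directed colimit of $\theta$-conflations of finitely presented objects. For this I would use the standard argument from \cite{Positselski_loc_coh}: an exact structure defined by Hom-exactness against a set of finitely presented objects, here the objects $\Sq{q}(M)$ or equivalently $F_q(M)$, is locally coherent.

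\emph{Step 3: $\operatorname{fp}(\mathcal{E})$ has enough projectives.} Given $X$ finitely presented, the previous lemma gives finite support and values in $\lsmod{A}$. I would take a finite $\theta$-precover of each $X(q)$ by objects of $\operatorname{add}(\varrho)$. This exists because $\Hom{A}{M}{X(q)}$ is finitely generated over $\Bbbk$ once $A$ is an artin algebra; for general $A$ the first part of the theorem needs only a weaker version of this step. Then $\bigoplus_{q\in\supp X}F_q(T_q)\to X$ is a deflation with finitely presented projective source.

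With these steps, the first two claims follow. The inclusion $\QST{Q}{A}^{\text{perf}}\subseteq\QST{Q}{A}^c$ comes from Lemma~\ref{lem:perf=compacts}, and the preceding lemma identifies $\theta$-perfect objects with $\operatorname{Perf}(\mathcal{E})$ via $\mathscr{F}={}_{Q,A}\mathcal{K}^\theta$. The equality $\operatorname{Kar}(\QST{Q}{A}^{\text{perf}})=\QST{Q}{A}^c$ is then Theorem~\ref{thm:compact=perfect}.

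\emph{Step 4: the artin case.} Here I would show that $\operatorname{fp}(\lMod{Q,A})$ is Krull--Schmidt, and then invoke the last clause of Theorem~\ref{thm:compact=perfect}. A finitely presented object $X$ has finite support and finitely generated values. Local boundedness and Hom-finiteness then make $\operatorname{End}(X)$ a submodule of $\prod_{q\in\supp X}\operatorname{End}_A(X(q))$, and each factor is a finitely generated module over the artinian center of $A$. Thus $\operatorname{End}(X)$ is a finite length algebra, hence semiperfect, so idempotents split in $\operatorname{fp}$ and objects decompose into finitely many summands with local endomorphism rings. I would make this precise by noting that $\operatorname{fp}(\lMod{Q,A})$ is Hom-finite over an artinian ring.

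I expect Step 2 to be the main obstacle, and also the finiteness of precovers in Step 3 when $\Bbbk$ is not artinian. The Krull--Schmidt argument needs care in how the artin $\Bbbk$-algebra hypothesis interacts with $\Bbbk$ being merely hereditary noetherian; I would use the artinian center of $A$ rather than $\Bbbk$ itself.
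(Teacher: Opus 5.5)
\textbf{Gap: Steps 2 and 3 fail in the generality of the statement.} You therefore cannot invoke Theorem~\ref{thm:compact=perfect} as a black box.

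\emph{Step 2.} The ``standard argument'' is false. In a locally coherent exact category, the kernel of a deflation between finitely presented objects is finitely presented. Indeed, such a deflation is a finitely presented object of the arrow category, so it is a retract of one of the finitely presented conflations in a directed system presenting it, and kernels are functorial. Now take $\varrho=\{A\}$ with $A$ not left coherent, and $Q=Q^{\mathrm{cpx}}$, which has no cycles. Let $I$ be a finitely generated left ideal that is not finitely presented. The deflation $S_q(A)\twoheadrightarrow S_q(A/I)$ then has kernel $S_q(I)$, which is not finitely presented. Yet this structure is defined by Hom-exactness against the finitely presented projectives $F_q(A)$.

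\emph{Step 3.} This fails even over artin algebras. A deflation onto $S_q(Z)$ from a finitely presented relative projective evaluates at $q$ to a $\theta$-deflation onto $Z$ from an object of $\operatorname{add}(\varrho)$. Such a finite precover need not exist when $\varrho$ is infinite. For example, let $A$ be the Kronecker algebra over an algebraically closed field, and let $\varrho$ consist of $A$ together with all regular simples $R_\lambda$, $\lambda\in\mathbb{P}^1$. The simple injective $Z$ satisfies $\Hom{A}{R_\lambda}{Z}\neq 0$ for every $\lambda$. On the other hand, $\Hom{A}{R_\mu}{-}$ vanishes on $A$ and on $R_\lambda$ for $\lambda\neq\mu$. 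So no finite sum from $\operatorname{add}(\varrho)$ is a precover of $Z$. Your Hom-finiteness argument over $\Bbbk$ only controls one $M\in\varrho$ at a time.

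\textbf{The missing idea.} You never need approximations of arbitrary finitely presented objects, only of objects of $\mathscr{F}={}_{Q,A}\mathcal{K}^\theta$. This is how the paper proceeds. For $X\in\mathscr{F}$, consider the canonical objectwise split conflations $\mathbb{K}(X)\rightarrowtail\mathbb{F}(X)\twoheadrightarrow X$ and $X\rightarrowtail\mathbb{G}(X)\twoheadrightarrow\mathbb{C}(X)$. All their terms have finite support and values in $\operatorname{add}(\varrho)$, so they lie in $\mathscr{F}$ by the preceding lemma. Moreover, $\mathbb{F}(X)=\bigoplus_{q}F_q(X(q))$ and $\mathbb{G}(X)=\bigoplus_q G_q(X(q))$ lie in $\operatorname{add}\{F_q(R)\mid q\in Q,\ R\in\varrho\}$, since $G_q\cong F_{\mathbb{S}^{-1}q}$.

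This makes $\mathscr{F}$ an idempotent complete Frobenius category, with projective-injectives $\operatorname{add}\{F_q(R)\}$, with no coherence hypothesis. The rest of the proof of Theorem~\ref{thm:compact=perfect} uses only this. That rest consists of:
\begin{itemize}
\item full faithfulness of $\underline{\mathscr F}\to\operatorname{Ho}$;
\item Lemma~\ref{lem:perf=compacts};
\item split idempotents in $\operatorname{Ho}$, giving $\operatorname{Kar}=\operatorname{Thick}$;
\item the passage from Krull--Schmidt for $\operatorname{fp}$ to Krull--Schmidt for $\underline{\mathscr F}$.
\end{itemize}

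With that replacement, your Step~4 is fine and is a legitimate alternative to the paper's composition-series argument. Finite support makes $\operatorname{End}(X)$ a finitely generated module over the artinian centre, hence a semiperfect ring. Together with split idempotents in $\operatorname{fp}(\lMod{Q,A})$, this gives the Krull--Schmidt property.
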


\begin{proof}
    To prove the first claim, one could use Theorem \ref{thm:compact=perfect}. However, one does not  a priori know that $\lMode{Q,A}{\theta}$ is a locally coherent exact category. Thus we need to show that $\mathscr{F}={}_{Q,A}\mathcal{K}^\theta$ is an idempotent complete Frobenius exact category also in this case. Also $\operatorname{fp}(\lMod{Q,A})$ does not a priori have enough projectives. It does so, if $A$ is (left) artinian or more generally (left) coherent.

    We show that $\mathscr{F}={}_{Q,A}\mathcal{K}^\theta$ is an idempotent complete Frobenius exact category with projective-injective objects  
    \begin{equation}\label{eq:projinj_in_F}
    \mathcal{P}:=\operatorname{add}\{F_q(R)\in\lMode{Q,A}{\theta}\,|\,q\in Q\text{ and }R\in \varrho\}=\lPrj{Q,A}^\theta\cap\operatorname{fp}(\lMod{Q,A}).
    \end{equation}
    Notice that $\mathscr{F}$ is extension closed, since both $\mathcal{C}_\theta$ and $\operatorname{fp}(\lMod{Q,A})$ are, and thus it inherits an exact structure from the ambient exact category $\lMode{Q,A}{\theta}$. Indeed, $\mathcal{C}_\theta$ is extension closed as the left class of a cotorsion pair. Moreover, given a conflation  $0\to X\to Y\to Z\to 0$  in $\lMode{Q,A}{\theta}$ with $X,Z$ in $\operatorname{fp}(\lMod{Q,A})$, by \cite[Lem.~5.2]{HJ-TAMS}, $Y$ has finite support, and also, it takes values in $\lsmod{A}$, since finitely presented $A$-modules are closed under extensions for any ring $A$. One also has that $\mathscr{F}$ is idempotent complete. The subcategory of cofibrant objects is idempotent complete since $\lMode{Q,A}{\theta}$ is idempotent complete as an efficient exact category; it is weakly idempotent complete with coproducts (\cite[Lem.~1.4]{SaorinStovicek}) and hence idempotent complete by \cite[Rmk.~7.3]{Buhler}. Moreover, $\operatorname{fp}(\lMod{Q,A})$ is idempotent complete as well. The idempotents split in the ambient abelian category $\lMod{Q,A}$ and finitely presented objects are closed under retracts \cite[Lem.~5.9]{adamek2010algebraic}.

    To prove that $\mathscr{F}$ has enough projectives and enough injectives, we use the approximations in \cite[Constr.~4.20]{SV_Q_shaped_homotopy}. In particular, for any $X\in\mathscr{F}$ one has 
    \begin{equation}\label{eq:approx_in_F}\begin{aligned}
        0\longrightarrow \mathbb{K}(X)\longrightarrow  \mathbb{F}(X)=\bigoplus_{q\in\supp{X}}F_q(X(q))\longrightarrow X\longrightarrow0\\
        0\longrightarrow X\longrightarrow  \mathbb{G}(X)=\bigoplus_{q\in\supp{X}}G_q(X(q))\longrightarrow\mathbb{C}(X)\longrightarrow0
    \end{aligned}\end{equation}
    where by \cite[Rmk.~4.22(b)]{SV_Q_shaped_homotopy} the objects $\mathbb{F}(X),\,\mathbb{G}(X),\,\mathbb{K}(X)$ and $\mathbb{C}(X)$ have finite support and take values in $\operatorname{add}\{X(q)\,|\,q\in Q\}\subseteq\operatorname{add}(\varrho)$. The projective and injective objects coincide since $G_q\cong F_{\mathbb{S}^{-1}_q}$ by \cite[Lem.~3.4]{HJ-TAMS}. 
    
    We now prove the description \eqref{projinj_in_F} of the projective-injective objects in $\mathscr{F}$. Take an object $X\in\mathcal{P}$. It is clear that $X$ is finitely presented, and moreover it is projective in $\lMode{Q,A}{\theta}$ thus projective also in the Frobenius category $\mathcal{C}_\theta$. We conclude that it is a projective-injective object in $\mathscr{F}$. Conversely, by \eqref{approx_in_F}, one has that a projective-injective object in $\mathscr{F}$ is a summand of an object $\mathbb{F}(X)$, which, as we already proved above, are finite direct sums of functors with finite support that take values in $\operatorname{add}(\varrho)$. Thus they lie in $\mathcal{P}$. For the last equality in \eqref{projinj_in_F}, notice that \cite[Prop.~4.6(a)]{SV_Q_shaped_homotopy} yields $\lPrj{Q,A}^\theta=\operatorname{Add}\{F_q(R)\,|\,q\in Q,\,R\in \varrho\}$. A finitely presented projective in $\lPrj{Q,A}^\theta$ is a summand of a finite subcoproduct, proving the equality. 

    The proof of 
    \[
    \QST{Q}{A}^{\text{perf}}\subseteq\QST{Q}{A}^c\quad\text{and}\quad \operatorname{Kar}(\QST{Q}{A}^{\text{perf}})=\QST{Q}{A}^c.
    \]
    is identical with the corresponding part in the proof of Theorem \ref{thm:compact=perfect}, since it does not depend on $\lMode{Q,A}{\theta}$ being locally coherent, neither on $\operatorname{fp}(\lMod{Q,A})$ having enough projectives. 

    Assume now that $A$ is an artin algebra. We will prove that $\operatorname{fp}(\lMod{Q,A})$ is a Krull-Schmidt category. Let $X$ be an arbitrary object in $\operatorname{fp}(\lMod{Q,A})$. By Lemma \ref{lem:perf=compacts}, it has finite support and take values $X(q)$, $q\in Q$ in finitely presented (and thus also finitely generated) $A$-modules. Since $A$ is an artin algebra, each $X(q)$ has finite length. We define the integer
    \[
    L(X):=\sum_{q\in\supp{X}}\ell_A(X(q))<\infty
    \]
    of all the lengths $\ell_A(X(q))$ of the values $X(q)$ of $X$. Note that for two subrepresentations $Y,Z\subseteq X$ such that $Y\subsetneq Z$, one has that at some $q\in\supp{X}$, $Y(q)$ is a proper subrepresentation of $Z(q)$ and thus $L(Y)<L(Z)$. We will show that $X$ has finite length in $\lMod{Q,A}$. Choose an ascending series of subrepresentations
    \begin{equation}\label{eq:ascending_series}
    0=Y_0\subsetneq Y_1\subsetneq Y_2\subsetneq\ldots\subsetneq Y_n\subsetneq\ldots\subset X.
    \end{equation}
    Evaluation \eqref{ascending_series} at an object $q\in Q$, yields an ascending series of $A$-modules, that stabilises, since $A$ is artinian. Moreover, we have that 
    \[
    0=L(Y_0)<L(Y_1)<\ldots<L(Y_m)\le L(X),
    \]
    for some positive integer $m$. We conclude that the ascending series in \eqref{ascending_series} has to be finite. Choose a finite series of strict subresentations,
    \begin{equation}\label{eq:composition_series}
        0=X_0\subsetneq X_1\subsetneq\ldots\subsetneq X_r=X
    \end{equation}
    such that the $r$ is the largest possible integer of strict inclusions and no non-trivial subrepresentation can be inserted between two terms of the series. We will show that for every $0\le i\le r$, the object $X_{i+1}/X_i$ is simple. Suppose for a contradiction that $X_{i+1}/X_i\neq0$ is not simple. Then there exists some non-zero subobject $W\subsetneq X_{i+1}/X_i$. Considering the canonical projection
    \[
    \pi\colon X_{i+1}\longrightarrow X_{i+1}/X_i,
    \]
    the preimage $\pi^{-1}(W)$ is a subrepresentation of $X_{i+1}$ and one has $X_i\subsetneq \pi^{-1}(W)\subsetneq X_{i+1}$. We reached a contradiction by the construction of \eqref{composition_series}. We conclude that $X$ has finite length. Finally, by \cite[Lem.~5.1~\&~Thm.~5.5]{Krause_KS}, $X$ admits a decomposition into a finite direct sum of indecomposable objects with local endomorphism rings. Since $X$ was arbitrary, $\operatorname{fp}(\lMod{Q,A})$ is a Krull-Schmidt category, and thus an application of Theorem \ref{thm:compact=perfect}, gives 
    \[
    \QST{Q}{A}^{\text{perf}}=\QST{Q}{A}^c.
    \]
\end{proof}

 For the special case that $\varrho=\{A\}$, the exact structure is the abelian one, $\QST{Q}{A}$ is the $Q$-shaped derived category $\QSD{Q}{A}$, and Theorem \ref{thm:hj_conj} proves the Conjecture of Holm and J{\o}rgensen for artin rings $A$.

\subsection*{Tool and computational resource disclosure}
Consistent with the Leiden Declaration’s recommendation to disclose the use of automated tools transparently, I used ChatGPT (OpenAI) for language editing, including corrections to grammar, spelling, clarity, and sentence structure. Moreover, the initial idea underlying the construction of Example \ref{exa:applications-fg} arose from a suggestion generated by artificial intelligence. I subsequently developed and refined this idea into the form presented here. The tool was not used to generate further ideas, arguments, analyses, or conclusions. I reviewed and approved all suggested changes, and the academic content remains entirely my own.

\bibliographystyle{alpha}
\bibliography{references}

\end{document}